\numberwithin{equation}{section}
\theoremstyle{plain}
\newtheorem{thm}{Theorem}[section]
\newtheorem{lem}[thm]{Lemma}
\newtheorem{prop}[thm]{Proposition}
\newtheorem{ques}[thm]{Question}
\theoremstyle{definition}
\newtheorem{defn}[thm]{Definition}
\theoremstyle{remark}
\newtheorem{rem}[thm]{Remark}
\newtheorem{ex}[thm]{Example}
\newcommand{\N}{\mathbb N}
\newcommand{\R}{\mathbb R}
\newcommand{\e}{\varepsilon}
\newcommand{\dist}{\mathrm{dist}}
\newcommand{\diam}{\mathrm{diam}}
\renewcommand{\t}{\tilde}
\renewcommand{\l}{\ell}
\newcommand{\la}{\lambda}
\newcommand{\p}{\partial}
\def\XXint#1#2#3{{\setbox0=\hbox{$#1{#2#3}{\int}$ }
\vcenter{\hbox{$#2#3$ }}\kern-.6\wd0}}
\title{Uniformization, $\p$-biLipschitz maps, sphericalization, and inversion}
\author{Clark Butler}
\begin{document}
\begin{abstract}
We define $\p$-biLipschitz homeomorphisms between uniform metric spaces and show that these maps are always quasim\"obius. We also show that a homeomorphism being $\p$-biLipschitz is equivalent to the map being biLipschitz in the quasihyperbolic metrics on these spaces. The proofs of these claims require us to uniformize the quasihyperbolic metric. We further show that all admissible uniformizations of a Gromov hyperbolic space are quasim\"obius to one another by the identity map, including those uniformizations that are based at a point of the Gromov boundary. Using the main results we then show that the sphericalization and inversion operations are compatible with uniformization of hyperbolic spaces in a natural sense.  
\end{abstract}

\maketitle

\section{Introduction}
Our work in this paper has two principal goals. The primary goal is to connect together the bounded uniformizations of Gromov hyperbolic spaces defined by Bonk, Heinonen, and Koskela \cite{BHK} and the unbounded uniformizations of Gromov hyperbolic spaces defined by the author \cite{Bu20}. We will show that these uniformizations are related by a quasim\"obius homeomorphism,  much like how the unit disk and upper half plane in $\R^{2}$ are related by a M\"obius transformation. The secondary goal is to provide a useful local criterion for a homeomorphism between uniform metric spaces to be quasim\"obius. This criterion follow from classical results in quasiconformal mapping theory in the Euclidean case, and has been generalized by V\"ais\"al\"a to the case of uniform domains in Banach spaces \cite{V99}. We take inspiration primarily from \cite{V99} in formulating this condition. In the process of proving the main results we establish several new results on unbounded uniform metric spaces that may be of independent interest. 

We will state our results regarding the secondary goal first. Before proceeding further we will fix some notation for the rest of the paper. In general when we have positive functions $f,g: X \rightarrow (0,\infty)$ defined on a set $X$ we will write $f \asymp_{C} g$ for a constant $C \geq 1$ if the inequality $C^{-1}g \leq f \leq Cg$ holds on $X$. For a metric space $(\Omega,d)$ and $x \in \Omega$ we write $B_{d}(x,r) = \{y \in \Omega:d(x,y) < r\}$ for the open ball of radius $r$ centered at $x$. For a subset $E \subset \Omega$ we write
\[
\dist(x,E) = \inf_{y \in E} d(x,y),
\]  
for the distance from a point to that set. When $\Omega$ is incomplete we write $\bar{\Omega}$ for the completion of $\Omega$ and $\p \Omega = \bar{\Omega} \backslash \Omega$ for the complement of $\Omega$ in $\bar{\Omega}$, which we will refer to as the \emph{metric boundary} of $\Omega$. We will continue to write $d$ for the canonical extension of the metric $d$ on $\Omega$ to the completion $\bar{\Omega}$. For $x \in \Omega$ we write  $d_{\Omega}(x) = \dist(x,\p \Omega)$ for the distance of $x$ to the metric boundary of $\Omega$.   

The key condition that we will consider involves a local Lipschitz-type property in which we rescale distances in the domain and range by distance to the metric boundary before comparing them. A similar condition has previously been considered by V\"ais\"al\"a \cite[Definition 7.2]{V99}. 

\begin{defn}\label{def:controlled}
Let $(\Omega,d)$ and $(\Omega',d')$ be incomplete metric spaces. For a constants $L \geq 0$ and $0 < \la < 1$, a map $f:(\Omega,d) \rightarrow (\Omega',d')$ is \emph{$\p$-Lipschitz} with data $(L,\la)$ if for each $x \in \Omega$ we have for all $y,z \in B_{d}(x,\la d_{\Omega}(x))$, 
\begin{equation}\label{controlled inequality}
\frac{d'(f(y),f(z))}{d_{\Omega'}'(f(x))} \leq  L\frac{d(y,z)}{d_{\Omega}(x)}.
\end{equation}
We say that $f$ is \emph{$\p$-biLipschitz} with data $(L,\la)$ if $f$ is a homeomorphism, $L \geq 1$, and both $f$ and $f^{-1}$ are $\p$-Lipschitz with data $(L,\la)$.
\end{defn}

We note that if $f$ is $\p$-biLipschitz with data $(L,\la)$ then applying \eqref{controlled inequality} for $f$ followed by $f^{-1}$ yields that for $x \in \Omega$ we have for $y$ and $z$ belonging to the smaller ball $B_{d}(x,L^{-1}\la d_{\Omega}(x))$, 
\begin{equation}\label{controlled comparison}
\frac{d'(f(y),f(z))}{d_{\Omega'}'(f(x))} \asymp_{L} \frac{d(y,z)}{d_{\Omega}(x)}.
\end{equation}

We will consider these conditions in the context of uniform metric spaces, which we now define. We remark that, unlike in our previous work \cite{Bu20}, in this paper we will be requiring that uniform metric spaces are locally compact. For a metric space $(\Omega,d)$ and a curve $\gamma: I \rightarrow \Omega$ we write $\l(\gamma)$ for the length of $\gamma$; if $\l(\gamma) < \infty$ then we say that $\gamma$ is \emph{rectifiable}. For an interval $I \subset \R$ and $t \in I$ we write $I_{\leq t} = \{s \in I: s\leq t\}$ and $I_{\geq t} = \{s \in I: s\geq t\}$. For a rectifiable curve $\gamma:I \rightarrow \Omega$ we write $\gamma_{-},\gamma_{+} \in \bar{\Omega}$ for the endpoints of $\gamma$; writing $t_{-} \in [-\infty,\infty)$ and $t_{+} \in (-\infty,\infty]$ for the endpoints of $I$, these are defined by the limits $\gamma(t_{-}) = \lim_{t \rightarrow t_{-}} \gamma(t)$ and $\gamma(t_{+}) = \lim_{t \rightarrow t_{+}} \gamma(t)$ in $\bar{\Omega}$ which exist because $\l(\gamma) < \infty$. 

\begin{defn}\label{def:uniform}For a constant $A \geq 1$ and an interval $I \subset \R$, a rectifiable curve $\gamma: I \rightarrow \Omega$ is \emph{$A$-uniform} if 
\begin{equation}\label{uniform one}
\l(\gamma) \leq Ad(\gamma_{-},\gamma_{+}),
\end{equation}
and if for every $t \in I$ we have
\begin{equation}\label{uniform two}
\min\{\l(\gamma|_{I_{\leq t}}),\l(\gamma|_{I_{\geq t}})\} \leq A d_{\Omega}(\gamma(t)). 
\end{equation}
We say that the metric space $\Omega$ is \emph{$A$-uniform} if $\Omega$ is locally compact and any two points in $\Omega$ can be joined by an $A$-uniform curve. 
\end{defn}

We extend Definition \eqref{def:uniform} to the case of non-rectifiable curves $\gamma: I \rightarrow \Omega$ by replacing \eqref{uniform one} with the condition that $d(\gamma(s),\gamma(t)) \rightarrow \infty$ as $s \rightarrow t_{-}$ and $t \rightarrow t_{+}$. We keep the requirement \eqref{uniform two} the same. Observe that with this extended definition the inequality \eqref{uniform two} implies that an $A$-uniform curve $\gamma$ is always \emph{locally rectifiable}, meaning that each compact subcurve of $\gamma$ is rectifiable. We note that it is easily verified from the definitions that the property of a curve $\gamma$ being $A$-uniform is independent of the choice of parametrization of $\gamma$.

To state our first theorem we will also need to define quasim\"obius homeomorphisms. Quasim\"obius homeomorphisms were first introduced by V\"ais\"al\"a in \cite{V85}; we refer the reader there for a detailed treatment of these maps.  The \emph{cross-ratio} of a quadruple of distinct points $x,y,z,w$ in a metric space $(\Omega,d)$ is defined by
\begin{equation}\label{cross-ratio}
[x,y,z,w]_{d} = \frac{d(x,z)d(y,w)}{d(x,y)d(z,w)}.
\end{equation}

\begin{defn}\label{def:quasimobius}
A homeomorphism $f: (\Omega,d) \rightarrow (\Omega',d')$ between metric spaces is \emph{$\eta$-quasim\"obius} for a homeomorphism $\eta:[0,\infty) \rightarrow [0,\infty)$ if for each quadruple of distinct points $x,y,z,w \in \Omega$ we have
\begin{equation}\label{quasimobius inequality}
[f(x),f(y),f(z),f(w)]_{d'} \leq \eta([x,y,z,w]_{d}).
\end{equation}
\end{defn}

We will commonly refer to $\eta$ as a \emph{control function}. When the control function $\eta$ does not need to be mentioned we simply say that $f$ is \emph{quasim\"obius}. It is easy to see via the symmetries of the cross-ratio that inequality \eqref{quasimobius inequality} implies the two-sided inequality
\begin{equation}\label{two-sided quasimobius inequality}
\frac{1}{\eta([x,y,z,w]_{d}^{-1})} \leq [f(x),f(y),f(z),f(w)]_{d'} \leq \eta([x,y,z,w]_{d}).
\end{equation}

We can now state our first theorem.

\begin{thm}\label{homothety to mobius}
Let $f: (\Omega,d) \rightarrow (\Omega',d')$ be a homeomorphism between two $A$-uniform metric spaces that is $\p$-biLipschitz with data $(L,\la)$. Then $f$ is $\eta$-quasim\"obius with $\eta$ depending only on $A$ and $L$.  
\end{thm}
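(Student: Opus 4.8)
The natural strategy is to pass through the quasihyperbolic metric, which linearizes the $\p$-biLipschitz condition. Recall that on an $A$-uniform space $(\Omega,d)$ the quasihyperbolic metric $k_\Omega$ is defined by $k_\Omega(x,y) = \inf_\gamma \int_\gamma \frac{ds}{d_\Omega}$. The first step is to show that a $\p$-biLipschitz map $f$ with data $(L,\la)$ is biLipschitz from $(\Omega,k_\Omega)$ to $(\Omega',k_{\Omega'})$, with biLipschitz constant depending only on $A$ and $L$. The inequality \eqref{controlled comparison} gives, for $x\in\Omega$ and $y$ in the small ball $B_d(x,L^{-1}\la d_\Omega(x))$, that $f$ distorts the normalized distance $\frac{d(y,z)}{d_\Omega(x)}$ by at most a factor $L$; since $k_\Omega$ and $d_\Omega$-normalized distances are comparable at small scales (a standard fact: $k_\Omega(x,y)\asymp \frac{d(x,y)}{d_\Omega(x)}$ when $d(x,y)\le \frac12 d_\Omega(x)$, say), applying $f$ along a quasihyperbolic geodesic broken into pieces that each lie in such a small ball shows that the $k$-length of the image of a geodesic is controlled, hence $k_{\Omega'}(f(x),f(y)) \le C k_\Omega(x,y)$; the reverse inequality follows by applying the same argument to $f^{-1}$. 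Here one uses that $A$-uniformity gives quasihyperbolic geodesics that one can subdivide at controlled scales.

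The second step is to invoke the known relationship between biLipschitz equivalence of quasihyperbolic metrics and quasimöbius maps. An $A$-uniform space $(\Omega,d)$ with its quasihyperbolic metric $k_\Omega$ is Gromov hyperbolic with constant depending only on $A$ (Bonk–Heinonen–Koskela \cite{BHK}), and the identity map $(\Omega,d)\to(\Omega,k_\Omega)$ realizes $(\Omega,d)$, up to quasimöbius equivalence with control depending on $A$, as (a bounded part of) the Gromov boundary-type completion. A biLipschitz map between the quasihyperbolic spaces is in particular a rough isometry, hence a quasiisometry with constants depending only on $A$ and $L$, and therefore extends to a quasimöbius map of the uniformized boundaries, i.e.\ of $(\Omega,d)$ and $(\Omega',d')$ themselves, with control function depending only on $A$, $L$. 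This is where the ``uniformize the quasihyperbolic metric'' machinery advertised in the abstract enters: one must know that the two $A$-uniform metrics $d$ and $d'$ are each quasimöbius-equivalent, with uniform control, to metrics obtained by uniformizing $k_\Omega$ and $k_{\Omega'}$, so that a quasiisometry downstairs becomes a quasimöbius map upstairs.

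The main obstacle is the passage from ``biLipschitz in the quasihyperbolic metric'' to ``quasimöbius in the original metric'' with quantitative, $\eta$-style control depending only on $A$ and $L$. The qualitative statement (quasiisometry of Gromov hyperbolic spaces $\Rightarrow$ quasimöbius of boundaries) is classical, but here $\Omega$ and $\Omega'$ are not boundaries — they are the uniformized spaces themselves, possibly unbounded, and one point of $\p\Omega$ may be ``at infinity.'' So the technical heart is to verify that the uniformization correspondence between an $A$-uniform space and its quasihyperbolization is itself quasimöbius with control depending only on $A$, and that this holds uniformly whether the relevant uniformization is bounded (in the sense of \cite{BHK}) or unbounded/based at a boundary point (in the sense of \cite{Bu20}); this is exactly the content that the later sections of the paper establish, and Theorem \ref{homothety to mobius} should follow by combining Step 1 with those uniformization-comparison results. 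A secondary technical point is the careful treatment of the non-rectifiable ``curve through infinity'' case in the extended Definition \ref{def:uniform}, which must be handled so that the quasihyperbolic geodesic subdivision argument in Step 1 still goes through near the boundary point at infinity.
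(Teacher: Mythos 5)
Your plan matches the paper's proof step for step: first use the $\p$-biLipschitz condition to show the induced map on quasihyperbolizations is biLipschitz (this is the easy direction of Theorem \ref{transfer lip}), then use the cross-difference argument of Proposition \ref{quasimobius uniform} to show that the induced map between the uniformizations of those quasihyperbolizations is quasim\"obius, and finally use Proposition \ref{uniformize quasi} to identify each $A$-uniform space with the uniformization of its own quasihyperbolization up to quasisymmetry, then compose. One minor slip worth noting: a biLipschitz map of the quasihyperbolizations is a quasiisometry but \emph{not} a rough isometry, and the argument in Proposition \ref{quasimobius uniform} uses the biLipschitz hypothesis specifically (through the Buyalo--Schroeder cross-difference comparison and the estimate $|x'z'|\asymp_H|xz|$ for interior points), not merely the induced boundary quasiconformality one would get from a bare quasiisometry.
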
 

Theorem \ref{homothety to mobius} is useful because the $\p$-biLipschitz condition is a local condition that is relatively straightforward to verify in many cases. However it is also somewhat difficult to work with in proofs. Our next theorem (which is used in the proof of Theorem \ref{homothety to mobius}) transforms the $\p$-biLipschitz condition into another condition that is easier to work with. 

We will say that a metric space is \emph{rectifiably connected} if any two points $x,y \in \Omega$ can be joined by a rectifiable curve.  For an incomplete, rectifiably connected metric space $(\Omega,d)$ we define the \emph{quasihyperbolic metric} on $\Omega$ by, for $x,y \in \Omega$, 
\begin{equation}\label{quasihyperbolic metric}
k(x,y) = \inf \int_{\gamma} \frac{ds}{d_{\Omega}(\gamma(s))},
\end{equation}
where the infimum is taken over all rectifiable curves joining $x$ to $y$. The metric space $(\Omega,k)$ is then called the \emph{quasihyperbolization} of the metric space $(\Omega,d)$. The quasihyperbolic metric has a rich history of use in problems in analysis and geometry when $(\Omega,d)$ is a domain in Euclidean space equipped with either the Euclidean metric or an internal path metric; we refer to \cite[Chapter 1]{BHK} for surveys of some of these applications as well as an overview of how they connect to the topics considered in this paper.

For a uniform metric space $(\Omega,d)$ the corresponding quasihyperbolization $(\Omega,k)$ is always a Gromov hyperbolic space \cite[Theorem 3.6]{BHK}. Gromov hyperbolic spaces will be formally defined later in this introduction; for now we observe that transferring problems to the setting of Gromov hyperbolic spaces unlocks a number of additional tools that can be used. For the next theorem we recall that a map $f:(\Omega,d) \rightarrow (\Omega',d')$ between metric spaces is \emph{$H$-Lipschitz} for a constant $H \geq 0$ if for all $x,y \in \Omega$ we have $d'(f(x),f(y)) \leq H d(x,y)$. The map $f$ is \emph{$H$-biLipschitz} if $d'(f(x),f(y)) \asymp_{H} d(x,y)$.

\begin{thm}\label{transfer lip}
Let $f: (\Omega,d) \rightarrow (\Omega',d')$ be a continuous map between two $A$-uniform metric spaces that is $\p$-Lipschitz with data $(L,\la)$. Then there is a constant $H = H(A,L)$ such that the induced map $f:(\Omega,k) \rightarrow (\Omega',k')$ between their quasihyperbolizations is $H$-Lipschitz. 

Conversely, if $f: (\Omega,d) \rightarrow (\Omega',d')$ is such that the induced map $f:(\Omega,k) \rightarrow (\Omega',k')$ is $H$-Lipschitz for a given $H \geq 0$ then there is $L = L(A,H)$ and $\la = \la(A,H)$ such that $f$ is $\p$-Lipschitz with data $(L,\la)$. 
\end{thm}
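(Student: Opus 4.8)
The plan is to exploit the standard fact that in a uniform space the quasihyperbolic distance is comparable to a ``Gehring--Osgood'' type sum: for $x,y \in \Omega$,
\begin{equation*}
k(x,y) \asymp_{C(A)} \log\left(1 + \frac{d(x,y)}{\min\{d_\Omega(x),d_\Omega(y)\}}\right) + \left|\log \frac{d_\Omega(x)}{d_\Omega(y)}\right|,
\end{equation*}
or equivalently that when $d(x,y) \le \tfrac12 \la \min\{d_\Omega(x), d_\Omega(y)\}$ one has $k(x,y) \asymp_{C(A,\la)} d(x,y)/d_\Omega(x)$, while for points that are far apart in the quasihyperbolic metric a uniform curve between them can be subdivided into boundedly many pieces each of small quasihyperbolic diameter. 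For the forward direction, first I would observe that $\p$-Lipschitz with data $(L,\la)$ gives, for $y,z$ in a small ball around $x$, a bound of the form $d_{\Omega'}'(f(y)) \asymp d_{\Omega'}'(f(x))$ (comparing $f(y)$ to $f(x)$ via \eqref{controlled inequality} and using that $d(y,z)$ is a small fraction of $d_\Omega(x)$), so that $f$ roughly preserves the ``shape'' of small balls. This yields the infinitesimal estimate: $f$ maps $B_d(x, c\la d_\Omega(x))$ into $B_{d'}'(f(x), C(L) d_{\Omega'}'(f(x)))$ with distortion controlled by $L$, which after taking logs is exactly the statement that $f$ is Lipschitz on quasihyperbolic balls of a fixed radius $r_0 = r_0(A,L)$ around each point. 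Finally, to upgrade this to a global Lipschitz bound, given $x,y$ I would take an $A$-uniform curve $\gamma$ joining them, note that $\gamma$ is a quasigeodesic in $(\Omega,k)$ (a consequence of uniformity, cf. \cite{BHK}), subdivide $\gamma$ into $\lceil k(x,y)/r_0 \rceil$ arcs each of quasihyperbolic diameter $\le r_0$, apply the local Lipschitz bound on each, and sum; the images of consecutive subdivision points are then joined in $(\Omega',k')$ with total length $\le H(A,L) \cdot k(x,y)$.

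For the converse direction, suppose the induced map on quasihyperbolizations is $H$-Lipschitz. Fix $x \in \Omega$ and let $y,z \in B_d(x,\la d_\Omega(x))$ for a $\la = \la(A,H)$ to be chosen small. The key estimates are the two-sided comparisons between $k$ and $d$ on small balls: on one side, $k(y,z) \le C(A)\, d(y,z)/d_\Omega(x)$ provided $\la$ is small enough that the straight comparison applies (using $d_\Omega(y) \ge (1-\la)d_\Omega(x)$); on the other side, in $\Omega'$ we need the reverse inequality $k'(f(y),f(z)) \ge c\, d'(f(y),f(z))/d_{\Omega'}'(f(x))$, which holds once we know $f(y),f(z)$ lie in a controlled ball around $f(x)$ — and this containment must be extracted from the $H$-Lipschitz hypothesis itself. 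So the argument is slightly circular-looking and must be organized carefully: first use the $H$-Lipschitz bound on the pair $(x,y)$ to get $k'(f(x),f(y)) \le H k(x,y) \le H C(A) \la \le$ (small), which via the lower bound for $k'$ forces $d_{\Omega'}'(f(y)) \asymp_{C} d_{\Omega'}'(f(x))$ and $d'(f(x),f(y))$ small relative to $d_{\Omega'}'(f(x))$; same for $z$; then $d'(f(y),f(z))/d_{\Omega'}'(f(x)) \le C'\, k'(f(y),f(z)) \le C' H\, k(y,z) \le C'' H\, d(y,z)/d_\Omega(x)$, which is \eqref{controlled inequality} with $L = C''H$ and the chosen $\la$.

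The main obstacle I anticipate is making the comparison lemma between $k$ and the ratio $d/d_\Omega$ fully quantitative and, in particular, handling the threshold radii: the $\p$-Lipschitz definition quantifies over the ball $B_d(x,\la d_\Omega(x))$ with the \emph{given} $\la$, whereas the clean linear comparison $k \asymp d/d_\Omega$ only holds on balls of radius a definite fraction of $d_\Omega(x)$, so in the forward direction one may need to first pass from data $(L,\la)$ to data $(L',\la')$ with $\la'$ as small as convenient (which is harmless, shrinking the ball only weakens the hypothesis — wait, it strengthens it, so this needs a short argument: iterate \eqref{controlled inequality} or simply note that the inequality on the larger ball implies it on any smaller concentric ball with the \emph{same} $L$). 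Bookkeeping the interplay of the constants $A$, $L$, $\la$, and the various absolute constants from the quasihyperbolic comparison lemma is where most of the real work lies; the geometric ideas themselves — local distortion control plus subdivision along a uniform quasigeodesic — are standard. I would isolate the quasihyperbolic comparison estimates as a preliminary lemma so that both halves of the theorem reduce to short applications of it.
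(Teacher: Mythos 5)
Your proposal is correct, and the two halves have a different status when compared with the paper. The forward direction (``$\p$-Lipschitz implies quasihyperbolic-Lipschitz'') is essentially the same as the paper's: both establish a local estimate $k'(f(x),f(y)) \leq C(A)L\,k(x,y)$ from \eqref{controlled inequality} and \eqref{hyperbolize comparison} (the paper uses the elementary bound $a^{-1}\log(1+at)\leq\log(1+t)$ to get $H=4A^2L$ cleanly), and then globalizes. The paper globalizes simply by noting $(\Omega,k)$ is geodesic, so a locally $H$-Lipschitz map is globally $H$-Lipschitz; you instead subdivide a uniform quasigeodesic, which is slightly clumsier (it costs an extra $A$-dependent factor since the quasihyperbolic length of a uniform curve can exceed $k(x,y)$) but is not wrong.

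For the converse direction your route is genuinely different from the paper's and, in fact, considerably more elementary. The paper's converse proof is not a direct estimate at all: it uniformizes the two quasihyperbolizations $Y$ and $Y'$ by an exponential density (Theorem \ref{Gehring-Hayman}), shows via the Lipschitz part of Proposition \ref{quasimobius uniform} that the induced map between the uniformizations $Y_{\e,b}\to Y'_{\e,b'}$ is $\p$-Lipschitz, and then composes with the $\p$-biLipschitz identity maps $\Omega\to Y_{\e,b}$, $\Omega'\to Y'_{\e,b'}$ from Proposition \ref{uniformize quasi}, invoking Proposition \ref{composition true}. Your argument bypasses all of that: choose $\la=\la(A,H)$ small so that $k(x,y),k(x,z),k(y,z)\le H^{-1}$ on $B_d(x,\la d_\Omega(x))$, push through the $H$-Lipschitz hypothesis to make $k'$ of the image pairs $\leq 1$, then use \eqref{BHK 2.3} to get $d_{\Omega'}(f(y))\asymp_e d_{\Omega'}(f(x))$ and the lower bound in \eqref{hyperbolize comparison} to get the ratio $d'(f(y),f(z))/\min\{d_{\Omega'}(f(y)),d_{\Omega'}(f(z))\}$ bounded by $e-1$, so that $\log(1+t)\asymp t$ applies; the linear chain $d'(f(y),f(z))/d_{\Omega'}(f(x))\lesssim k'(f(y),f(z))\le Hk(y,z)\lesssim H\,d(y,z)/d_\Omega(x)$ then gives \eqref{controlled inequality} with $L=C(A)H$. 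This is a clean, self-contained proof. What the paper's heavier route buys is unification: Propositions \ref{uniformize quasi} and \ref{quasimobius uniform} are needed anyway for Theorems \ref{homothety to mobius}, \ref{quasimobius uniformizations}, and \ref{inversion theorem}, so the author reuses them here rather than giving a stand-alone argument. Your version would shorten this particular proof but would not remove the need for that machinery elsewhere.
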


Thus for homeomorphisms between uniform metric spaces the $\p$-biLipschitz property is equivalent to the induced map between the quasihyperbolizations being biLipschitz. The proof of the first direction is a short calculation (which shows that we may in fact take $H = 4A^{2}L$), while the proof of the other direction is significantly more involved as it makes use of uniformizations of the quasihyperbolizations of the spaces. Theorems \ref{homothety to mobius} and \ref{transfer lip} were inspired by a collection of similar results established by V\"ais\"al\"a in his study of mappings between uniform domains in Banach spaces \cite[Section 11]{V99}.



We move now to a discussion of our primary theorems, which concern uniformizations of Gromov hyperbolic spaces. We begin by recalling some definitions from our previous work \cite{Bu20}; we refer the reader to there for further details. For a continuous function $\rho: X \rightarrow (0,\infty)$ we write
\[
\l_{\rho}(\gamma) = \int_{\gamma} \rho \, ds,
\]
for the line integral of $\rho$ along $\gamma$. Such a positive continuous function $\rho$ will be called a \emph{density} on $X$.

\begin{defn}\label{conformal factor}
Let $(X,d)$ be a rectifiably connected metric space and let $\rho: X \rightarrow (0,\infty)$ be a density on $X$. The \emph{conformal deformation of $X$ with conformal factor $\rho$} is the metric space $X_{\rho} = (X,d_{\rho})$ with metric
\[
d_{\rho}(x,y) = \inf  \l_{\rho}(\gamma),
\]
with the infimum taken over all curves $\gamma$ joining $x$ to $y$. 

If $X$ is geodesic then we say further that $\rho$ is a \emph{Gehring-Hayman density} (abbreviated as \emph{GH-density}) with constant $M \geq 1$ if for any $x,y \in X$ and any geodesic $\gamma$ joining $x$ to $y$ we have
\begin{equation}\label{first GH}
\l_{\rho}(\gamma) \leq M d_{\rho}(x,y). 
\end{equation}
\end{defn}

The quasihyperbolization $(\Omega,k)$ of an incomplete metric space $(\Omega,d)$ is an example of a conformal deformation with conformal factor $\rho(x) = d_{\Omega}(x)^{-1}$.

A metric space $(X,d)$ is \emph{proper} if its closed balls are compact. A \emph{geodesic} $\gamma: I \rightarrow X$ is a curve $\gamma$ which is isometric as a mapping of the interval $I$ into $X$, or in other words, it satisfies $d(\gamma(s),\gamma(t)) = |s-t|$ for all $s,t \in I$. We say that $X$ is \emph{geodesic} if any two points in $X$ can be joined by a geodesic. A \emph{geodesic triangle} $\Delta$ in $X$ consists of three points $x,y,z \in X$ together with geodesics joining these points to one another. Writing $\Delta = \gamma_{1} \cup \gamma_{2} \cup \gamma_{3}$ as a union of its edges, we say that $\Delta$ is \emph{$\delta$-thin} for a given $\delta \geq 0$ if for each point $p \in \gamma_{i}$, $i =1,2,3$, there is a point $q \in \gamma_{j}$ with $d(p,q) \leq \delta$ and $i \neq j$. A geodesic metric space $X$ is \emph{Gromov hyperbolic} if there is a $\delta \geq 0$ such that all geodesic triangles in $X$ are $\delta$-thin; in this case we will also say that $X$ is \emph{$\delta$-hyperbolic}. 

For a proper geodesic $\delta$-hyperbolic space $X$ its \emph{Gromov boundary} $\p X$ is defined to be the set of all geodesic rays $\gamma:[0,\infty) \rightarrow X$ up to the following equivalence relation: $\gamma \sim \sigma$ if there is a constant $c \geq 0$ such that $d(\gamma(t),\sigma(t)) \leq c$ for all $t \geq 0$. In this case we say that $\gamma$ and $\sigma$ are at \emph{bounded distance} from each other. For a geodesic line $\gamma: \R \rightarrow X$ its endpoints in the Gromov boundary $\p X$ are defined to be the equivalence classes $\xi = [\gamma|_{[0,\infty)}]$ and $\zeta = [\bar{\gamma}|_{[0,\infty)}]$, where we write $\bar{\gamma}(t)= \gamma(-t)$ for $\gamma$ with its reversed orientation. We then say that $\gamma$ starts from $\zeta$ and ends at $\xi$. 

 We note that there is a formal conflict between the notation of $\p X$ for the Gromov boundary and the notation $\p X = \bar{X} \backslash X$ for the metric boundary of $X$. However when $X$ is proper we always have $\bar{X} = X$ so that the metric boundary is always empty. Thus there should be no ambiguity in the use of this notation.  

We define rough starlikeness next. 

\begin{defn}\label{def:rough star}Given $K \geq 0$ we say that $X$ is \emph{$K$-roughly starlike} from a point $z \in X$ if for each $x \in X$ there is a geodesic ray $\gamma: [0,\infty) \rightarrow X$ with $\gamma(0) = z$ and $\dist(x,\gamma) \leq K$. We extend this notion to points of the Gromov boundary by saying that $X$ is $K$-roughly starlike from a point $\omega \in \p X$ if for each $x \in X$ there is a geodesic line $\gamma: \R \rightarrow X$ starting from $\omega$ with $\dist(x,\gamma) \leq K$.
\end{defn}

\begin{rem}\label{different star}
The definition of $K$-rough starlikeness we give here corresponds to condition (1) of $K$-rough starlikeness defined in \cite[Definition 2.3]{Bu20}. For \emph{proper} geodesic Gromov hyperbolic spaces condition (2) of that reference always holds, so these definitions are equivalent for proper spaces. 
\end{rem}

The \emph{Busemann function} $b_{\gamma}: X \rightarrow \R$ associated to a geodesic ray $\gamma:[0,\infty) \rightarrow X$ is defined by the limit
\begin{equation}\label{first busemann definition}
b_{\gamma}(x) = \lim_{t \rightarrow \infty} d(\gamma(t),x)-t. 
\end{equation}
The Busemann function $b_{\gamma}$ is a $1$-Lipschitz function on $X$. We will generally omit $\gamma$ from the notation and write $b = b_{\gamma}$. We write 
\begin{equation}\label{extension busemann definition}
\mathcal{B}(X) = \{b_{\gamma}+s: \text{$\gamma$ a geodesic ray in $X$, $s \in \R$}\},
\end{equation}
for the set of all Busemann functions associated to geodesic rays in $X$ as well as all translates of these functions by additive constants, which we will also refer to as Busemann functions. For a point $\omega \in \p X$ we say that a Busemann function $b \in \mathcal{B}(X)$ is \emph{based at $\omega$} if $b = b_{\gamma}+s$ for some $s \in \R$ and some geodesic ray $\gamma$ belonging to the equivalence class of $\omega$. Conversely, given such a Busemann function $b = b_{\gamma}+s$ we refer to the equivalence class $[\gamma] \in \p X$ as its \emph{basepoint} and write $\omega_{b} = [\gamma]$ for this basepoint. 

For $z \in X$ we define $b_{z}(x) = d(x,z)$ to be the distance from $z$. We augment the set of Busemann functions with the set of translates of distance functions on $X$, 
\begin{equation}\label{distance definition}
\mathcal{D}(X) = \{b_{z}+s :z\in X, s\in \R\}.
\end{equation}
For $b \in \mathcal{D}(X)$ with $b = b_{z}+s$ for some $z \in X$ and $s \in \R$ we then refer to $z$ as the \emph{basepoint} of $b$ and will sometimes write $\omega_{b} = z$ in analogy to the case of Busemann functions. We write $\hat{\mathcal{B}}(X) = \mathcal{D}(X) \cup \mathcal{B}(X)$. Then all functions $b \in \hat{\mathcal{B}}(X)$ are $1$-Lipschitz. For $\e > 0$ and $b \in \hat{\mathcal{B}}(X)$ we define a density $\rho_{\e,b}(x) = e^{-\e b(x)}$ on $X$. We write $X_{\e,b} = X_{\rho_{\e,b}}$ for the conformal deformation of $X$ with conformal factor $\rho_{\e,b}$. In the case $b = b_{z}$ for some $z \in X$ we will also use the notation $\rho_{\e,z}(x) = e^{-\e d(x,z)}$ and write $X_{\e,z}$ for the corresponding  conformal deformation of $X$.

By \cite[Theorem 1.4]{Bu20}, given $b \in \hat{\mathcal{B}}(X)$ such that $X$ is $K$-roughly starlike from the basepoint $\omega_{b}$ of $b$ and given $\e > 0$ such that $\rho_{\e,b}$ is a GH-density on $X$ with constant $M$ then geodesics in $X$ are $A$-uniform curves in $X_{\e,b}$ with $A = A(\delta,K,\e,M)$ depending only on these parameters as well as the hyperbolicity parameter $\delta$. In particular the metric space $X_{\e,b}$ is $A$-uniform. The space $X_{\e,b}$ is bounded when $b \in \mathcal{D}(X)$ and unbounded when $b \in \mathcal{B}(X)$. It is natural to inquire as to what extent the uniformization $X_{\e,b}$ depends on the choice of $\e > 0$ and $b \in \hat{\mathcal{B}}(X)$. This is the topic of our next theorem. 

\begin{thm}\label{quasimobius uniformizations}
Let $X$ be a proper geodesic $\delta$-hyperbolic space. Let $\e,\e' > 0$ and $b,b' \in \hat{\mathcal{B}}(X)$ be given such that $X$ is $K$-roughly starlike from the basepoints of both $b$ and $b'$ and both $\rho_{\e,b}$ and $\rho_{\e',b'}$ are GH-densities with the same constant $M$. Then the homeomorphism $X_{\e,b} \rightarrow X_{\e',b'}$ induced by the identity map on $X$ is $\p$-biLipschitz with data $(L,\la)$ depending only on $\delta$, $K$, $\e$, $\e'$, and $M$. Furthermore this map is $\eta$-quasim\"obius with 
\[
\eta(t) = C\max\{t^{\frac{\e'}{\e}},t^{\frac{\e}{\e'}}\},
\]
where $C = C(\delta,K,\e,\e',M)$. 
\end{thm}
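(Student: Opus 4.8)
The plan rests on one sharp two-sided estimate for the uniformized distances. With $\omega_b$ the basepoint of $b$ and $\gamma=[x,y]$ a geodesic of $(X,d)$, the Gehring-Hayman hypothesis gives $d_{\e,b}(x,y)\asymp_M \l_{\rho_{\e,b}}(\gamma)=\int_\gamma e^{-\e b}\,ds$, and standard estimates in $\delta$-hyperbolic spaces show that along $\gamma$ the $1$-Lipschitz function $b$ coincides, up to an additive error $O(\delta)$, with the tent function $s\mapsto \max\{b(x)-s,\ b(y)-(\l(\gamma)-s)\}$. Integrating the tent function and simplifying will yield, for both $(\e,b)$ and $(\e',b')$ with a common constant $C_0=C_0(\delta,K,\e,\e',M)$,
\[
d_{\e,b}(x,y)\ \asymp_{C_0}\ \e^{-1}\,e^{-\frac{\e}{2}(b(x)+b(y))}\,\sinh\!\left(\tfrac{\e}{2}\,d(x,y)\right)
\]
(alternatively this may be quoted from \cite{Bu20}, and from \cite{BHK} in the bounded case). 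The crucial feature is that the basepoint enters only through the conformal prefactor $e^{-\frac{\e}{2}(b(x)+b(y))}$.

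For the $\p$-biLipschitz claim I would combine this with the known identity $d_{X_{\e,b}}(x)\asymp_{C_1}\e^{-1}e^{-\e b(x)}$ — equivalently $k_{\e,b}\asymp_{C_1}\e\,d$, with $C_1=C_1(\delta,K,\e,M)$ (this is where rough starlikeness enters, via a geodesic ray from $x$ moving away from $\omega_b$; see \cite{Bu20}, and \cite{BHK} in the bounded case). It follows immediately that the identity map $(X_{\e,b},k_{\e,b})\to(X_{\e',b'},k_{\e',b'})$ is $H$-biLipschitz with $H=H(\delta,K,\e,\e',M)$. Since $X_{\e,b}$ and $X_{\e',b'}$ are $A$-uniform with $A=A(\delta,K,\e,\e',M)$ by \cite[Theorem 1.4]{Bu20}, applying the converse half of Theorem \ref{transfer lip} to this map and to its inverse shows that the identity map $X_{\e,b}\to X_{\e',b'}$ and its inverse are both $\p$-Lipschitz, hence the map is $\p$-biLipschitz with data $(L,\la)$ depending only on the stated parameters. (One can also read this off the distance estimate directly, since a small enough $d_{\e,b}$-ball about $x$ sits inside a small $d$-ball on which $d_{\e,b}\asymp e^{-\e b(x)}d$.)

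For the quasim\"obius claim I would feed the distance estimate into all four pairs of a cross-ratio; the conformal prefactors then telescope, leaving for all distinct $x,y,z,w$
\[
[x,y,z,w]_{d_{\e,b}}\ \asymp_{C_0^4}\ \Phi_\e(x,y,z,w):=\frac{\sinh(\tfrac{\e}{2}d(x,z))\,\sinh(\tfrac{\e}{2}d(y,w))}{\sinh(\tfrac{\e}{2}d(x,y))\,\sinh(\tfrac{\e}{2}d(z,w))},
\]
and likewise $[x,y,z,w]_{d_{\e',b'}}\asymp_{C_0^4}\Phi_{\e'}(x,y,z,w)$. The theorem then reduces to the elementary inequality: for every $\beta>0$ there is $C''=C''(\beta)$ such that, for all $p,q,r,s>0$ with each one at most the sum of the other three (a property the tuple $(d(x,z),d(y,w),d(x,y),d(z,w))$ always has),
\[
\frac{\sinh(\beta p)\sinh(\beta q)}{\sinh(\beta r)\sinh(\beta s)}\ \le\ C''\max\!\left\{\Big(\tfrac{\sinh p\,\sinh q}{\sinh r\,\sinh s}\Big)^{\beta},\ \Big(\tfrac{\sinh p\,\sinh q}{\sinh r\,\sinh s}\Big)^{1/\beta}\right\}.
\]
I would prove this by a short case analysis on $T:=p+q-r-s$, using $\sinh(\beta t)\asymp_\beta\max\{\sinh t,(\sinh t)^\beta\}$ for $\beta\ge1$ (and the analogous $\min$-estimate for $\beta\le1$): after substituting, the left side becomes comparable to $\Pi\,\mu^{\beta-1}$ with $\Pi=\tfrac{\sinh p\sinh q}{\sinh r\sinh s}$ and $\mu=\tfrac{\max\{1,\sinh p\}\max\{1,\sinh q\}}{\max\{1,\sinh r\}\max\{1,\sinh s\}}\asymp e^{T}$, and the quadrilateral inequality (which forces $p,q\ge T/2$ when $T$ is large positive, and $r,s\ge -T/2$ when $T$ is large negative) makes $\mu$ either bounded or dominated by $\Pi$, exactly as needed. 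Taking $\beta=\e'/\e$ and absorbing the factors $C_0$ — which only inflates the multiplicative constant, replacing the exponent $\beta$ by $\max\{\e'/\e,\e/\e'\}$ inside it — yields the asserted control function $\eta(t)=C\max\{t^{\e'/\e},t^{\e/\e'}\}$ with $C=C(\delta,K,\e,\e',M)$.

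The main obstacle I expect is the sharp two-sided distance estimate with constants depending only on $\delta,K,\e,\e',M$: in particular the tent-function approximation of $b$ along geodesics with error controlled purely by $\delta$, and its careful combination with the Gehring-Hayman condition and rough starlikeness. The elementary $\sinh$-inequality is conceptually routine but must genuinely invoke the quadrilateral inequality, since it is false for unconstrained positive tuples.
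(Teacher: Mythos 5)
Your overall plan is sound and the key distance estimate you write down is correct: setting $q=(x|y)_b$ and using the tent--function approximation of $b$ along a geodesic, a short computation gives $\int_\gamma e^{-\e b}\,ds = \e^{-1}e^{-\e(b(x)+b(y))/2}\bigl(2e^{\e|xy|/2}-2\cosh(\tfrac{\e}{2}(b(x)-b(y)))\bigr)$, and since $|b(x)-b(y)|\le|xy|$ the bracketed term is comparable (within a factor $4$) to $\sinh(\e|xy|/2)$. This is exactly equivalent to the paper's Lemma~\ref{lem:estimate both}, which instead records the estimate as $d_{\e,b}(x,y)\asymp e^{-\e(x|y)_b}\min\{1,|xy|\}$. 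Your route is therefore correct but genuinely different from the paper's in two respects.

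For the quasim\"obius claim, the paper derives Theorem~\ref{quasimobius uniformizations} from the more general Proposition~\ref{quasimobius uniform}, in which the cross-ratio is factored as $[x,y,z,w]_{d_{\e,b}}\asymp e^{\e\langle x,y,z,w\rangle}\,G(x,y,z,w)$ using the Buyalo--Schroeder cross-difference identity (Lemma~\ref{cross difference lemma}), and then a case analysis on the sign of $\langle x,y,z,w\rangle$ and whether $G\gtrless1$. You instead telescope the conformal prefactors in your $\sinh$-formula and reduce to an elementary $\sinh$-inequality. This is a legitimate alternative; the paper's case analysis is essentially the same fact in disguise (that the quantities $e^{\langle x,y,z,w\rangle}$ and $G$ are coupled by the triangle inequality, which is the ``quadrilateral constraint'' you invoke). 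But the elementary inequality is only sketched in your write-up, and since it is false for unconstrained tuples (as you correctly observe), it does need a careful case analysis; the step where you assert ``$\mu$ either bounded or dominated by $\Pi$'' is precisely where the work lives. The paper's version of this step is the paragraph of the proof of Proposition~\ref{quasimobius uniform} beginning ``Let's now assume instead that $\langle x,y,z,w\rangle\ge0$ and $G(x,y,z,w)\le1$.''

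For the $\p$-biLipschitz claim, you go through $k_{\e,b}\asymp\e\,d$ (Proposition~\ref{biLip circle}) and the converse direction of Theorem~\ref{transfer lip}. This works, but note that the paper marks Proposition~\ref{biLip circle} as ``not needed elsewhere,'' and proves Theorem~\ref{transfer lip} itself by way of the small-ball computation that opens Proposition~\ref{quasimobius uniform} (Lemma~\ref{subwhitney}). So your route re-imports some of that machinery indirectly; your parenthetical alternative (reading off the $\p$-biLipschitz property directly from the distance estimate on small balls) is in fact closer to what the paper actually does and is preferable for self-containment. Also, the comparison $d_{X_{\e,b}}(x)\asymp e^{-\e b(x)}$ (Lemma~\ref{compute distance}) does not carry an $\e^{-1}$; that factor is harmlessly absorbed in the constant, but the identity $k_{\e,b}\asymp\e\,d$ is Proposition~\ref{biLip circle} rather than a one-line consequence of Lemma~\ref{compute distance}, so it deserves an explicit citation.
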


Note in particular that we obtain $\eta(t) = Ct$ in the case $\e = \e'$. Theorem \ref{quasimobius uniformizations} provides sharper control over the form of the control function $\eta$ than would be given by Theorem \ref{homothety to mobius}. The uniformizations $X_{\e,b}$ for $b \in \mathcal{D}(X)$ are a slight generalization of the uniformizations considered by Bonk, Heinonen, and Koskela in their work \cite{BHK}; Theorem \ref{quasimobius uniformizations} thus relates these uniformizations to the uniformizations associated to Busemann functions built by the author in \cite{Bu20}. 

We next consider inversion and sphericalization in the context of uniformization. We recall these notions as defined in the work of Buckley, Herron, and Xie \cite{BHX08}, starting with inversion. For an incomplete metric space $\Omega$ we fix a point $p \in \bar{\Omega}$. We define a function $i^{p}$ on $\Omega$ by 
\begin{equation}\label{define inversion}
i^{p}(x,y) = \frac{d(x,y)}{d(x,p)d(y,p)}.
\end{equation}
This function may not define a metric on $\Omega$, but it is $4$-biLipschitz to a canonically defined metric $d^{p}$ on $\Omega$ by \cite[Lemma 3.2]{BHX08}. The \emph{inversion} of $\Omega$ about the point $p$ is defined to be the metric space $\Omega^{p} = (\Omega,d^{p})$. Since $p \in \p \Omega$, $p$ is not an isolated point of $\bar{\Omega}$ and therefore $\Omega^{p}$ is unbounded by \cite[Lemma 3.2 (a)]{BHX08}.

We now describe the sphericalization construction in \cite{BHX08}. We will consider sphericalization as a special case of inversion, as described at the end of \cite[Section 3.B]{BHX08}. We start with a  metric space $(\Omega,d)$ with a fixed choice of point $p \in \Omega$. We define an auxiliary metric space $\hat{\Omega} = \Omega \cup_{p \sim 0} [0,1]$ by attaching the interval $[0,1]$ to $\Omega$ by identifying $p$ with $0$ and then giving the resulting space the metric $\hat{d}$ which restricts to $d$ on $\Omega$, restricts to the Euclidean metric on $[0,1]$, and for $x \in \Omega$, $t \in [0,1]$ is given by $\hat{d}(x,t) = d(x,p) + t$. We then let $\hat{d}^{p}$ be the metric on $\hat{\Omega} \backslash \{1\}$ defined by taking the inversion of $\hat{\Omega}$ about the point $1 \in [0,1]$ and write $\hat{\Omega}^{p} = (\Omega,\hat{d}^{p})$ for the metric space resulting from restricting this metric to $\Omega$. The metric space  $\hat{\Omega}^{p}$ is called the \emph{sphericalization} of $\Omega$ based at $p$. The space $\hat{\Omega}^{p}$ always satisfies $\diam \, \hat{\Omega}^{p} \leq 1$ by the discussion in \cite[Section 3.B]{BHX08}, and in particular is always bounded. 

Since inversion produces an unbounded space from a bounded space and sphericalization produces a bounded space from an unbounded space, given a $\delta$-hyperbolic space $X$ as in Theorem \ref{quasimobius uniformizations} one may ask whether the bounded uniformizations $X_{\e,b}$ for $b \in \mathcal{D}(X)$ and the unbounded uniformizations $X_{\e,b}$ for $b \in \mathcal{B}(X)$ can be related to one another by the operations of sphericalization and inversion. Using the work of \cite{BHX08} we will show below that this is indeed the case. To state the result properly we require another definition. 

\begin{defn}\label{def:quasisymmetry}
A homeomorphism $f:(\Omega,d) \rightarrow (\Omega',d')$ between metric spaces is \emph{$\eta$-quasisymmetric} for a homeomorphism $\eta: [0,\infty) \rightarrow [0,\infty)$ if for each triple of points $x,y,z \in \Omega$ and each $t \geq 0$ we have
\[
d(x,y) \leq td(x,z) \Rightarrow d'(f(x),f(y)) \leq \eta(t)d'(f(x),f(z)).
\]
\end{defn}

As with quasim\"obius maps, we will call $\eta$ a control function and will simply say that $f$ is quasisymmetric if the control function does not need to be mentioned. Quasisymmetric maps are always quasim\"obius \cite[Theorem 3.2]{V85} (with the control function for the quasim\"obius condition being quantitative in the control function for the quasisymmetry condition), but the reverse need not always be the case since quasisymmetric maps must take bounded sets to bounded sets \cite[Proposition 10.8]{Hein01} while the same is not true of quasim\"obius homeomorphisms. 

In the statement of Theorem \ref{inversion theorem} below we will restrict to $b \in \mathcal{D}(X)$ of the form $b_{z}(x) = d(x,z)$ for some $z \in X$, and we will only consider Busemann functions $b \in \mathcal{B}(X)$ such that $b(z) = 0$. These restrictions are harmless normalizations as any $b \in \mathcal{D}(X)$ or $b \in \mathcal{B}(X)$ will only differ from such a function by an additive constant. In the formulation of Theorem \ref{inversion theorem} we are implicitly using the fact that for $z \in X$ the metric boundary $\p X_{\e,z}$ can be identified with the Gromov boundary $\p X$ in a canonical fashion \cite[Theorem 1.6]{Bu20}. In particular for $b \in \mathcal{B}(X)$ with basepoint $\omega \in \p X$ we can consider $\omega$ as a point of $\p X_{\e,z}$.

\begin{thm}\label{inversion theorem} 
Let $X$ be a proper geodesic $\delta$-hyperbolic space that is $K$-roughly starlike from a point $z \in X$ and a point $\omega \in \p X$. Fix a Busemann function $b$ based at $\omega$ satisfying $b(z) = 0$ and suppose that $\e > 0$ is chosen such that $\rho_{\e,z}$ and $\rho_{\e,b}$ are GH-densities with the same constant $M$. Let $X_{\e,z}^{\omega}$ be the inversion of $X_{\e,z}$ about the point $\omega \in \p X_{\e,z}$ and let $\hat{X}_{\e,b}^{z}$ be the sphericalization of $X_{\e,b}$ based at $z$. 

Then the metric spaces $X_{\e,z}^{\omega}$ and $\hat{X}_{\e,b}^{z}$ are each $A'$-uniform with $A' = A'(\delta,K,\e,M)$ and the maps $X_{\e,b} \rightarrow X_{\e,z}^{\omega}$ and $X_{\e,z} \rightarrow \hat{X}_{\e,b}^{z}$ induced by the identity map on $X$ are each $\p$-biLipschitz with data $(L,\la)$ and $\eta$-quasisymmetric with $L$, $\la$, and $\eta$ depending only on $\delta$, $K$, $\e$, and $M$. 
\end{thm}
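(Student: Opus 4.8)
The plan is to derive Theorem~\ref{inversion theorem} from Theorems~\ref{quasimobius uniformizations} and~\ref{transfer lip} together with the results of \cite{BHX08} relating inversion and sphericalization to the quasihyperbolic metric. The key observation is that inversion and sphericalization are essentially inverse operations at the level of the quasihyperbolization: both $X_{\e,z}^{\omega}$ and $\hat{X}_{\e,b}^{z}$ should be recognized as uniformizations of $X$ (based at $\omega$ and at $z$ respectively), quasim\"obius to the ``model'' uniformizations $X_{\e,b}$ and $X_{\e,z}$ by the identity. I would begin by recalling from \cite{BHX08} that the inversion $\Omega^{p}$ of a uniform space $\Omega$ about a boundary point $p$ is again uniform, with the quasihyperbolic metric essentially preserved: up to an additive constant, $k_{\Omega^{p}} \asymp k_{\Omega}$. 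The analogous statement holds for sphericalization. Concretely, $\p X_{\e,z}^{\omega}$ is identified with $(\p X_{\e,z} \setminus \{\omega\}) \cup \{\text{new point from } z\}$, i.e.\ with $(\p X \setminus \{\omega\}) \cup \{z\}$, which matches the way $\p X_{\e,b}$ is identified with $(\p X \setminus \{\omega\}) \cup \{z\}$ via \cite[Theorem 1.6]{Bu20}. So the identity map $X \to X$ gives a well-defined homeomorphism $X_{\e,b} \to X_{\e,z}^{\omega}$ extending to the appropriate boundaries.

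With these identifications in place, the first main step is to show that the identity map $X_{\e,b} \to X_{\e,z}^{\omega}$ is $\p$-biLipschitz. For this I would use Theorem~\ref{transfer lip}: it suffices to show that the induced map on quasihyperbolizations $k_{X_{\e,b}} \to k_{X_{\e,z}^{\omega}}$ is biLipschitz. Now $k_{X_{\e,z}^{\omega}}$ is comparable (up to additive constant, hence biLipschitz since quasihyperbolic metrics on uniform spaces are geodesic and unbounded) to $k_{X_{\e,z}}$ by the inversion estimate of \cite{BHX08}, and $k_{X_{\e,z}} \to k_{X_{\e,b}}$ is biLipschitz by Theorem~\ref{quasimobius uniformizations} combined with Theorem~\ref{transfer lip} (the $\p$-biLipschitz conclusion of Theorem~\ref{quasimobius uniformizations} feeds back into Theorem~\ref{transfer lip} to give the biLipschitz equivalence of the quasihyperbolic metrics). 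Chaining these gives that $k_{X_{\e,b}}$ and $k_{X_{\e,z}^{\omega}}$ are biLipschitz via the identity, and then Theorem~\ref{transfer lip} (second direction) yields the $\p$-biLipschitz conclusion with data depending only on $\delta,K,\e,M$. The uniformity constant $A'$ comes out of the same package: \cite{BHX08} gives that the inversion/sphericalization of an $A$-uniform space is $A'$-uniform quantitatively, and $A = A(\delta,K,\e,M)$ by \cite[Theorem 1.4]{Bu20}. The argument for $X_{\e,z} \to \hat{X}_{\e,b}^{z}$ is symmetric, using the sphericalization estimates of \cite{BHX08} in place of the inversion estimates.

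The second main step is upgrading $\p$-biLipschitz (hence quasim\"obius, by Theorem~\ref{homothety to mobius}) to quasisymmetric. Here the point is that a quasim\"obius homeomorphism between bounded metric spaces, or one taking bounded sets to bounded sets, is automatically quasisymmetric (this is standard, e.g.\ \cite[Theorem 3.10]{V85} relating the two notions under a boundedness hypothesis). For the map $X_{\e,z} \to \hat{X}_{\e,b}^{z}$ both spaces are bounded ($X_{\e,z}$ is bounded since $b_{z} \in \mathcal{D}(X)$, and $\diam \hat{X}_{\e,b}^{z} \leq 1$), so quasim\"obius implies quasisymmetric quantitatively. For the map $X_{\e,b} \to X_{\e,z}^{\omega}$ neither space is bounded, so one must instead check directly that the map and its inverse take bounded sets to bounded sets --- this follows because both spaces are uniform and the identity on $X$ is biLipschitz on the quasihyperbolizations (up to additive constant), and balls in a uniform space correspond to sublevel sets of the quasihyperbolic distance from a fixed point together with a lower bound on $d_{\Omega}$; a biLipschitz map of quasihyperbolizations preserves such sets up to bounded error. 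Then $\eta$-quasim\"obius plus ``bounded-to-bounded both ways'' gives $\eta$-quasisymmetric with $\eta$ quantitative.

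I expect the main obstacle to be the bookkeeping around the boundary identifications and the additive-constant ambiguities: the quasihyperbolic metric is only preserved up to an additive constant under inversion and sphericalization, and one must be careful that (i) this is harmless because quasihyperbolizations of unbounded uniform spaces are unbounded geodesic spaces on which additive perturbation is biLipschitz, and (ii) the identification of $\p X_{\e,z}^{\omega}$ and $\p \hat{X}_{\e,b}^{z}$ with the appropriate punctured/augmented Gromov boundary is compatible with the identity map on $X$, so that the homeomorphisms in the statement are genuinely well-defined homeomorphisms of the claimed spaces rather than merely of their interiors. A secondary technical point is verifying that the GH-density hypotheses on both $\rho_{\e,z}$ and $\rho_{\e,b}$ with a common constant $M$ are exactly what is needed to apply Theorem~\ref{quasimobius uniformizations} as the bridge between the two model uniformizations; once that is in hand the rest is assembling quantitative dependencies.
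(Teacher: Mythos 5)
Your overall strategy matches the paper's: reduce everything to biLipschitz equivalence of quasihyperbolizations, chaining Theorem~\ref{transfer lip}, Proposition~\ref{quasimobius uniform} (the engine behind Theorem~\ref{quasimobius uniformizations}), and the quasihyperbolic-preservation results of \cite{BHX08} (Theorems 4.7 and 4.12 there, which give biLipschitz --- not merely additive --- equivalence of the quasihyperbolic metrics, so the additive-ambiguity worry in your final paragraph is moot). This gets you uniformity and the $\p$-biLipschitz and quasim\"obius conclusions. But the upgrade from quasim\"obius to quasisymmetric, which is where the paper spends most of its effort in Propositions~\ref{prop inversion} and~\ref{prop sphericalization}, has a genuine gap in both directions of your argument.

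For the bounded case $X_{\e,z}\to\hat X^z_{\e,b}$, it is not true that a quasim\"obius map between bounded spaces is automatically $\eta$-quasisymmetric with quantitative $\eta$. V\"ais\"al\"a's criterion \cite[Theorem 3.12]{V85} requires exhibiting a triple of points that is $\kappa$-separated relative to diameter in \emph{both} domain and range, and the resulting control function depends on $\kappa$. Producing such a triple with $\kappa = \kappa(\delta,K,\e,M)$ is precisely the work of the second half of Proposition~\ref{prop sphericalization}: one chooses $\gamma(0), \gamma(t), \gamma(-t)$ on a geodesic line through $\omega$ near $z$, estimates the relevant distances via Lemma~\ref{lem:estimate both} and the explicit sphericalization formula, and uses the diameter comparison in Lemma~\ref{bounded ratio}. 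None of this follows from boundedness alone.

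For the unbounded case $X_{\e,b}\to X^\omega_{\e,z}$, the correct criterion \cite[Theorem 3.10]{V85} is that $d_{\e,b}(z,x_n)\to\infty$ iff $d^\omega_{\e,z}(z,x_n)\to\infty$. Your derivation via biLipschitz equivalence of the quasihyperbolizations does not deliver this: $k(z,x_n)\to\infty$ is equivalent to $x_n$ eventually leaving every quasihyperbolic ball, which happens whenever $x_n$ converges to \emph{any} boundary point of the uniform space, not only when it escapes to infinity in the original metric. You must show the sequence converges specifically to the distinguished point at infinity, which is the content of Lemma~\ref{go to infinity} (proved via Gromov products with $\omega$ and the compactness of the completed bounded uniformization). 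The paper's Proposition~\ref{prop inversion} combines this lemma with the explicit inversion formula to conclude.

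A secondary omission: the quantitative form of \cite[Theorem 4.7]{BHX08} that you invoke in the inversion chain also requires a bound on the ratio $\phi(X_{\e,z}) = \diam X_{\e,z}/\diam \p X_{\e,z}$. The paper establishes this bound in Lemma~\ref{bounded ratio}; without it the constant in the quasihyperbolic biLipschitz equivalence is not quantitative in $(\delta,K,\e,M)$.
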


The uniformity of the metric spaces $X_{\e,z}^{\omega}$ and $\hat{X}_{\e,b}^{z}$ is a straightforward consequence of the fact shown in \cite{BHX08} that sphericalization and inversion preserve uniformity of metric spaces. Theorem \ref{homothety to mobius} immediately implies from the $\p$-biLipschitz condition that the maps $X_{\e,b} \rightarrow X_{\e,b}^{\omega}$ and $X_{\e,z} \rightarrow \hat{X}_{\e,b}^{z}$ are quasim\"obius, but we are able to obtain with a small amount of additional work that they are actually quasisymmetric with quantitative control on the control function.

\begin{rem}\label{compare quasi} In \cite[Chapter 1]{BHK} the term \emph{quasisimilarity} is used for a notion that is both weaker and stronger than the property of being $\p$-biLipschitz that we define in this paper. They define a homeomorphism $f: (\Omega,d) \rightarrow (\Omega',d')$ of incomplete metric spaces to be a \emph{quasisimilarity} with data $(\eta,L,\la)$ if $f$ is $\eta$-quasisymmetric and for each $x \in \Omega$ there is a constant $c_{x} > 0$ such that whenever $y,z \in B(x,\la d_{\Omega}(x))$ we have 
\begin{equation}\label{quasisimilar inequality}
d'(f(y),f(z)) \asymp_{L} c_{x}d(y,z).
\end{equation}
The $\p$-biLipschitz property corresponds to requiring that $c_{x} = \frac{d_{\Omega'}'(x)}{d_{\Omega}(x)}$ (see the comparison \eqref{controlled comparison}). With this tweak to the definition, Theorem \ref{homothety to mobius} shows in many cases that the comparison \eqref{quasisimilar inequality} is sufficient on its own to deduce the quasisymmetry property, including in the case of maps between bounded uniform metric spaces considered in \cite{BHK} (see (2) of Remark \ref{specializations}). 
\end{rem}

Lastly, it is important to know when the hypotheses of Theorems \ref{quasimobius uniformizations} and \ref{inversion theorem} are actually satisfied. The key hypotheses are the rough starlikeness condition and the fact that $\rho_{\e,b}$ is a GH-density. For rough starlikeness it turns out that rough starlikeness from a single point of $X$ is sufficient to obtain rough starlikeness from \emph{all} points of $X \cup \p X$, provided that $\p X$ contains at least two points. 

\begin{prop}\label{all star}
Let $X$ be a proper geodesic $\delta$-hyperbolic space such that $\p X$ contains at least two points, and suppose that $X$ is $K$-roughly starlike from a point $x \in X \cup \p X$. Then there is a constant $K' \geq 0$ such that $X$ is $K'$-roughly starlike from all points of $X \cup \p X$. 
\end{prop}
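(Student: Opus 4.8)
The plan is to reduce to the case of rough starlikeness from a boundary point and then transfer from there to every point of $X \cup \partial X$ using the uniform thinness of generalized geodesic triangles. Throughout I will use three standard facts about a proper geodesic $\delta$-hyperbolic space $X$, without proof: (i) for $p \in X$ and $\omega \in \partial X$ there is a geodesic ray $[p,\omega)$, and for distinct $\omega,\omega' \in \partial X$ a geodesic line $[\omega,\omega']$, both obtained by a normal-families argument from properness; (ii) there is a constant $\delta_0 = \delta_0(\delta)$ such that every geodesic triangle with vertices in $X \cup \partial X$ (ideal vertices allowed) is $\delta_0$-thin; and (iii) two geodesic rays $\gamma_1,\gamma_2$ in $X$ with the same endpoint at infinity have images at Hausdorff distance at most $d(\gamma_1(0),\gamma_2(0)) + C_1\delta$, where $C_1$ is an absolute constant --- an easy consequence of (ii).

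The first step I would carry out is the transfer statement: if $X$ is $C$-roughly starlike from some $\omega_0 \in \partial X$, then $X$ is $(C+\delta_0)$-roughly starlike from every point of $X \cup \partial X$. Fix $x \in X$; rough starlikeness from $\omega_0$ gives a geodesic line $\gamma$ from $\omega_0$ to some $\omega_x \in \partial X$ (necessarily $\omega_x \neq \omega_0$) with $\dist(x,\gamma) \le C$. For a target point $w \in X$, apply (ii) to the triangle with vertices $w,\omega_0,\omega_x$ and side $\gamma$ opposite $w$: the point of $\gamma$ that is $C$-close to $x$ lies $\delta_0$-close to one of the rays $[w,\omega_0)$, $[w,\omega_x)$, both issuing from $w$. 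For a target boundary point $\omega'$: if $\omega' \in \{\omega_0,\omega_x\}$ use $\gamma$ itself (reversed if needed); otherwise apply (ii) to the ideal triangle with vertices $\omega',\omega_0,\omega_x$ and side $\gamma$ opposite $\omega'$, placing $x$ within $C+\delta_0$ of one of the lines $[\omega',\omega_0]$, $[\omega',\omega_x]$, both starting from $\omega'$.

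Next I would handle a given starlikeness point $x_0 \in X$ by producing a boundary point to which it is roughly starlike. Using $|\partial X| \ge 2$, fix distinct $\xi_1,\xi_2 \in \partial X$, a geodesic line $\sigma = [\xi_1,\xi_2]$, and $R = \dist(x_0,\sigma) < \infty$; I claim $X$ is $K_1$-roughly starlike from $\xi_1$ with $K_1 = K + \delta_0 + R + C_1\delta$. Given $x \in X$, rough starlikeness from $x_0$ gives a ray $\gamma_x = [x_0,\xi_x)$ with $\dist(x,\gamma_x) \le K$. If $\xi_x \neq \xi_1$, apply (ii) to the triangle with vertices $x_0,\xi_1,\xi_x$ and side $[\xi_1,\xi_x]$ opposite $x_0$: the point of $\gamma_x$ that is $K$-close to $x$ lies $\delta_0$-close either to $[\xi_1,\xi_x]$, a line from $\xi_1$, in which case we are done, or to the ray $\eta_1 = [x_0,\xi_1)$; in this last case (and also if $\xi_x = \xi_1$, when $\gamma_x$ is itself a ray to $\xi_1$) fact (iii) applied to $\eta_1$ and the sub-ray of $\sigma$ issuing from the point of $\sigma$ nearest $x_0$ and converging to $\xi_1$ puts $\eta_1$, hence $x$ up to $K+\delta_0$, within $R+C_1\delta$ of $\sigma$, a line ending at $\xi_1$. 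Combining the two steps: if the given $x_0$ lies in $\partial X$, apply the first step with $\omega_0 = x_0$, $C = K$; if $x_0 \in X$, apply the second step and then the first step with $\omega_0 = \xi_1$, $C = K_1$. In either case one obtains a single finite constant $K'$ --- equal to $K + 2\delta_0 + R + C_1\delta$ in the second case, where $R$ depends on the auxiliary choices but is always finite --- such that $X$ is $K'$-roughly starlike from every point of $X \cup \partial X$.

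The main obstacle is the second step, and specifically the fact that a geodesic ray in $X$ need not extend to a geodesic line, so one cannot simply reverse the ray $[x_0,\xi_1)$ into a line from $\xi_1$ passing near $x_0$. This is precisely where the hypothesis $|\partial X| \ge 2$ enters: it supplies an external line $\sigma$ ending at $\xi_1$, and fact (iii) forces $[x_0,\xi_1)$ to stay within the finite (if not a priori uniform) distance $R + C_1\delta$ of $\sigma$. Everything else is routine bookkeeping with the thin-triangle inequality for generalized triangles.
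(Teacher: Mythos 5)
Your proof is correct and follows the same two-step decomposition as the paper's proof via Proposition~\ref{rough star boundary}: first transfer rough starlikeness from a boundary point to all of $X \cup \p X$ by thinness of generalized geodesic triangles, then reduce rough starlikeness from an interior point $x_0$ to rough starlikeness from some point of $\p X$. Your Step~1 is the paper's Proposition~\ref{rough star boundary}(1) almost verbatim. Your Step~2 differs in implementation from the paper's Proposition~\ref{rough star boundary}(2): where the paper invokes tripod maps and, for each target $\omega \in \p X$, a near-optimal auxiliary point $\zeta$ controlled by the quantity $S(x_0)$ (so as to establish starlikeness from \emph{every} boundary point directly), you fix once and for all a geodesic line $\sigma$ between two boundary points, aim only at $\xi_1$, and control the distance of $[x_0,\xi_1)$ from $\sigma$ via the stability of asymptotic rays. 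This is a bit more elementary (no tripod-map formalism) and yields less (starlikeness from just one $\omega$) — but since Proposition~\ref{all star} only needs one boundary point before Step~1 takes over, the two routes deliver the same conclusion, and indeed your $R = \dist(x_0,\sigma)$ and the paper's $S(x_0)$ are comparable up to $c(\delta)$ by Lemmas~\ref{interior basepoint} and \ref{two points}, so even the quantitative dependence of $K'$ matches.
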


This proposition has been obtained independently by Zhou with a similar proof \cite[Lemma 3.3]{Z20}. 

Proposition \ref{all star} shows that we need not worry about the dependence on the basepoint in the rough starlikeness hypothesis. The claim is false when $\p X$ consists of a single point, as for instance the Euclidean half-line $[0,\infty)$ is $0$-hyperbolic and $0$-roughly starlike from the origin $0$, but is at best $t$-roughly starlike from any point $t \in [0,\infty)$ since the only geodesic ray starting from $t$ is the half-line $[t,\infty)$. However, in some sense this is the only counterexample: when $\p X$ consists of a single point and $X$ is roughly starlike from one of its points then $X$ is roughly isometric to the half-line $[0,\infty)$ by Proposition \ref{rough ray} and most claims of interest can be verified by direct argument. The nature of the dependence of $K'$ on the other parameters is summarized in Proposition \ref{rough star boundary}. We remark that $K'$ is not always quantitative solely in $\delta$ and $K$ when $x \in X$, as can be seen in Example \ref{dependence}. A bound for $K'$ can be computed in terms of $\delta$, $K$, and the Gromov product of any two distinct points in $\p X$ using Lemma \ref{two points}.

Regarding the question of when $\rho_{\e,b}$ is a GH-density, we will make use of the following imporant theorem due to Bonk, Heinonen, and Koskela; we note that the version in the reference is somewhat more general. This is known as a \emph{Gehring-Hayman}-type theorem, after the namesakes' corresponding result in the context of Euclidean and hyperbolic metrics on hyperbolic domains in the complex plane \cite{GH62}. 

\begin{thm}\label{Gehring-Hayman}\cite[Theorem 5.1]{BHK}
Let $(X,d)$ be a geodesic $\delta$-hyperbolic space. There is $\e_{0} = \e_{0}(\delta) > 0$ depending only on $\delta$ such that if a continuous function $\rho:X \rightarrow (0,\infty)$ satisfies for all $x,y \in X$ and some fixed $0 < \e \leq \e_{0}$,
\begin{equation}\label{proto Harnack}
e^{-\e d(x,y)} \leq \frac{\rho(x)}{\rho(y)} \leq e^{\e d(x,y)},
\end{equation}
then $\rho$ is a GH-density $X$ with constant $M = 20$. 
\end{thm}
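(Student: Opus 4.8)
The plan is to establish the inequality \eqref{first GH} directly: fix $x,y\in X$ and a geodesic $\gamma\colon[0,d(x,y)]\to X$ from $x$ to $y$, parametrized by arclength, and prove $\l_{\rho}(\gamma)\le 20\,\l_{\rho}(\sigma)$ for every curve $\sigma$ joining $x$ to $y$; taking the infimum over such $\sigma$ then yields $\l_{\rho}(\gamma)\le 20\,d_{\rho}(x,y)$. We may assume $\l_{\rho}(\sigma)<\infty$, and after reparametrizing $\sigma$ by $\rho$-arclength we may take $\sigma$ rectifiable. The hypothesis \eqref{proto Harnack} says precisely that $\log\rho$ is $\e$-Lipschitz on $(X,d)$; consequently $\rho$ changes by a factor at most $e^{\e|s-t|}$ between points at $d$-distance $|s-t|$, in particular $\rho$ is essentially constant (up to the factor $e^{\e_{0}}$) on any $d$-ball of radius $1$, and along $\gamma$ it varies by at most $e^{\e|s-t|}$ between parameters $s$ and $t$.

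The one genuinely geometric ingredient is the standard exponential-divergence estimate in a $\delta$-hyperbolic space, obtained by iterating the thin-triangles condition along a dyadic subdivision: there is a constant $C_{0}=C_{0}(\delta)$ so that every point $p$ of a geodesic joining points $a$ and $b$ lies within $d$-distance $\delta\log_{2}(2+\l(\sigma'))+C_{0}$ of the image of every curve $\sigma'$ joining $a$ to $b$; equivalently, a curve far from the geodesic must be exponentially long. The point of the smallness of $\e_{0}$ is that the length of a competing curve grows exponentially in its distance from the geodesic at a rate governed by $1/\delta$, whereas $\rho$ grows at most exponentially at rate $\e$, so one wants $\e_{0}$ to be a small multiple of $1/\delta$, the exact value being chosen at the very end to make the geometric series below sum to at most $20$.

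The main step decomposes $\gamma$ using the continuous function $h(t)=d(\gamma(t),\im\sigma)$, which vanishes at $t=0$ and at $t=d(x,y)$. On the part $\{h\le 1\}$ each $\gamma(t)$ is within $d$-distance $1$ of $\im\sigma$, so $\rho$ is comparable there to its value at a nearby point of $\sigma$; using that a geodesic meets any fixed-radius $d$-ball in a set of bounded $d$-length, together with a unit-arclength net along $\sigma$, the $\rho$-length of this part of $\gamma$ is at most $C_{1}(\delta)\,\l_{\rho}(\sigma)$. For the \emph{deep} part I group the parameters into dyadic bands $\{2^{j}\le h(t)<2^{j+1}\}$, which cuts that part of $\gamma$ into finitely many sub-geodesics $\gamma_{j,k}$; to each I attach, via nearest-point projection onto $\gamma$, a sub-arc $\sigma_{j,k}$ of $\sigma$ crossing the corresponding region, arranged to be pairwise disjoint up to bounded overlap. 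The divergence estimate forces $\l(\sigma_{j,k})$, hence $\l_{\rho}(\sigma_{j,k})$, to be at least (an exponential in $2^{j}/\delta$) times $\rho$ near $\gamma_{j,k}$, while $\l_{\rho}(\gamma_{j,k})$ is at most $\rho$ near $\gamma_{j,k}$ times the $d$-length of $\gamma_{j,k}$ times an extra Harnack factor $e^{\e\,2^{j+1}}$. Summing over $k$ and then over $j$ gives, schematically, $\l_{\rho}(\gamma)\le\big(C_{1}(\delta)+\sum_{j\ge 0}C_{2}(\delta)\,2^{j}e^{\e\,2^{j+1}-c\,2^{j}/\delta}\big)\l_{\rho}(\sigma)$ with $c=c(\delta)>0$; for $\e\le\e_{0}$ with $\e_{0}$ small the series converges to an absolute constant, which a careful accounting shows may be arranged to be at most $20$.

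The step I expect to be the main obstacle is exactly this bookkeeping: choosing the right quantity to subdivide on, guaranteeing that the associated sub-arcs of $\sigma$ are essentially disjoint so that $\sum_{j,k}\l_{\rho}(\sigma_{j,k})\le \l_{\rho}(\sigma)$ up to a universal factor, disposing of the non-rectifiable and degenerate cases, and tracking every constant precisely enough to reach the stated value $M=20$. The hyperbolic-geometry lemma itself is routine; the delicate part is assembling it into a clean convergent geometric series with a universal bound.
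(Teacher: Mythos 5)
The paper does not prove this statement: Theorem~\ref{Gehring-Hayman} is quoted directly from \cite[Theorem 5.1]{BHK} and used as a black box, so there is no in-paper argument against which to compare your sketch. What follows assesses your outline on its own terms as an attempted reconstruction of the cited proof.

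Your outline has the right key geometric input (exponential divergence of curves from geodesics in a $\delta$-hyperbolic space) and the right analytic input (the Harnack inequality controlling the variation of $\rho$), and the overall strategy---compare $\l_{\rho}(\gamma)$ and $\l_{\rho}(\sigma)$ by decomposing $\gamma$ and locating ``shadow'' sub-arcs of $\sigma$---is the correct one. But there are gaps that go beyond bookkeeping. (i)~The dyadic bands of $h(t)=d(\gamma(t),\im\,\sigma)$ do not cut $\gamma$ into pieces of controlled length: $h$ is $1$-Lipschitz but can remain essentially constant over a long stretch of $\gamma$, so the factor $2^{j}$ in your geometric series has no justification as a bound on $\l(\gamma_{j,k})$, and you would need a further subdivision by arclength together with a reworked disjointness argument. (ii)~The bounded-overlap property of the shadow arcs $\sigma_{j,k}$ is asserted rather than constructed, and it is precisely where the theorem is hard; to reach the specific value $M=20$ one needs the shadows to be essentially pairwise disjoint (multiplicity close to $1$), not merely boundedly overlapping. (iii)~Your intermediate constants $C_{1}(\delta)$, $C_{2}(\delta)$, $c(\delta)$ depend on $\delta$ while the conclusion $M=20$ is universal; reconciling this requires a normalization (e.g.\ rescaling the metric by $1/\delta$, using that the Gehring--Hayman inequality is scale-invariant and the Harnack exponent becomes $\e\delta$), a step the sketch omits. (iv)~Shrinking $\e_{0}$ cannot by itself force your sum below $20$: as $\e\to 0$ the series tends to $C_{1}(\delta)+\sum_{j}C_{2}(\delta)\,2^{j}e^{-c\,2^{j}/\delta}$, a fixed positive quantity, so the constant $20$ must come from the quality of the absolute constants once $\delta$ is normalized, not from taking $\e_{0}$ small.
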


The inequality \eqref{proto Harnack} is known as a \emph{Harnack type inequality}. For a given $\e > 0$ and any $b \in \hat{\mathcal{B}}(X)$ the function $\rho_{\e,b}$ satisfies inequality \eqref{proto Harnack}  since $b$ is $1$-Lipschitz. By Theorem \ref{Gehring-Hayman} the property of being a GH-density is a non-issue once $\e$ is sufficiently small. There are cases in which one must consider values of $\e$ larger than the $\e_{0}$ given above, however. Such values of $\e$ appear naturally when considering CAT$(-1)$ spaces \cite{Bu20} and uniformizations of hyperbolic fillings of metric spaces \cite{BBS21}, \cite{Bu20}. 

Combining Proposition \ref{all star} and Theorem \ref{Gehring-Hayman} we see that, for a given proper geodesic $\delta$-hyperbolic space $X$ (with $\p X$ having at least two points) that is $K$-roughly starlike from some point of $X \cup \p X$, there is always a $K' \geq 0$ and an $\e_{0} = \e_{0}(\delta)$ such that for any $b \in \hat{\mathcal{B}}(X)$ and any $0 < \e \leq \e_{0}$ we have that $X$ is $K'$-roughly starlike from the basepoint of $b$ and the density $\rho_{\e,b}$ is admissible for $X$ with constant $M = 20$. Thus Theorems \ref{quasimobius uniformizations} and \ref{inversion theorem} can be applied freely in this range. 

Proposition \ref{all star} raises the interesting question of whether a similar phenomenon also holds for the densities $\rho_{\e,b}$ for $\e > 0$, $b \in \hat{\mathcal{B}}(X)$. 

\begin{ques}\label{admissibility question}
Suppose that $X$ is a proper geodesic $\delta$-hyperbolic space and suppose that $\e > 0$ and $b \in \hat{\mathcal{B}}(X)$ are such that $\rho_{\e,b}$ is a GH-density with constant $M$. Is there a constant $M'$ such that for all $b' \in  \hat{\mathcal{B}}(X)$ the density $\rho_{\e,b'}$ is a GH-density with constant $M'$?
\end{ques}

Theorem \ref{Gehring-Hayman} shows that there is a universal constant $M$ such that $\rho_{\e,b}$ is a GH-density for any $b \in \hat{\mathcal{B}}(X)$ once $\e$ is sufficiently small. For CAT$(-1)$ spaces \cite[Theorem 1.10]{Bu20} shows that there is a universal constant $M$ such that $\rho_{1,b}$ is a GH-density for any $b \in \hat{\mathcal{B}}(X)$. Question \eqref{admissibility question} asks whether this phenomenon generalizes to any Gromov hyperbolic space $X$.  This question is in a sense complementary to a recent theorem of Lindquist and Shanmugalingam \cite{LS20} that concerns the invariance of the uniformity property of $X_{\e,z}$ for a given $\e > 0$, $z \in X$ under rough isometries of Gromov hyperbolic spaces. Another interesting question to consider is whether their theorem also holds for uniformizations by Busemann functions instead.

In Section \ref{sec:hyperbolic} we review some basic facts about Gromov hyperbolic spaces and recall some results from our previous paper \cite{Bu20}. We also prove Proposition \ref{all star}. In Section \ref{sec:quasihyperbolize} we study the quasihyperbolic metric on unbounded uniform metric spaces and prove several claims that will be required later in the paper. Section \ref{sec:uniformize quasihyperbolic} is devoted to uniformizing the quasihyperbolic metric on a uniform metric space. Section \ref{sec:quasi to uniform} considers what happens if we quasihyperbolize a uniformization of a Gromov hyperbolic space. The results in Section \ref{sec:quasi to uniform} are not required in the rest of the paper. In Section \ref{sec:main theorems} we complete the proofs of the main theorems. 

Lastly we remark that a number of the foundational results in this paper in Sections 3-5 appeared simultaneously and independently in work of Zhou \cite{Z20}. One may also find there a local-to-global theorem similar in spirit to Theorem \ref{homothety to mobius} \cite[Theorem 1.4]{Z20}, but phrased in terms of quasisymmetric maps instead of quasim\"obius maps.

\section{Hyperbolic metric spaces}\label{sec:hyperbolic}

In this section we recall some standard results regarding Gromov hyperbolic spaces, as well as some facts regarding Busemann functions established in our previous paper.  We will also prove Proposition \ref{all star}. The first parts of this section are closely modeled on the corresponding section of our previous paper \cite[Section 2]{Bu20}, but with appropriate simplifications now that we are assuming our Gromov hyperbolic spaces are proper. The material in this section can be found in any standard reference on Gromov hyperbolic spaces, for instance \cite{BS07}, \cite{GdH90}. 

\subsection{Definitions}\label{subsec:defn} Let $X$ be a set and let $f$, $g$ be real-valued functions defined on $X$. For $c \geq 0$ we will write $f \doteq_{c} g$ if
\[
|f(x)-g(x)| \leq c,
\] 
for all $x \in X$. If the exact value of the constant $c$ is not important or implied by context we will often just write $f \doteq g$. The relation $f \doteq g$ will sometimes be referred to as a \emph{rough equality} between $f$ and $g$.  We will generally stick to the convention throughout this paper of using $c \geq 0$ for additive constants and $C \geq 1$ for multiplicative constants. To indicate on what parameters -- such as $\delta$ -- the constants depend on we will write $c = c(\delta)$, etc. 

Let $f:(X,d) \rightarrow (X',d')$ be a map between metric spaces. We say that $f$ is \emph{isometric} if $d'(f(x),f(y)) = d(x,y)$ for $x$, $y\in X$. For a constant $c \geq 0$ we say that $f$ is \emph{$c$-roughly isometric} if $d(f(x),f(y)) \doteq_{c} d(x,y)$ for $x,y \in X$. As usual we will omit the constants in these terms when we don't require them. We recall that a curve $\gamma: I \rightarrow X$ is a \emph{geodesic} if it is an isometric mapping of the interval $I \subset \R$ into $X$. 

When dealing with Gromov hyperbolic spaces $X$ in this paper we will in many cases use the generic distance notation $|xy|:=d(x,y)$ for the distance between $x$ and $y$ in $X$ and the generic notation $xy$ for a geodesic connecting two points $x,y \in X$, even when this geodesic is not unique. We recall that a geodesic triangle $\Delta$ in $X$ is a collection of three points $x,y,z \in X$ together with geodesics $xy$, $xz$, and $yz$ joining these points and serving as the edges of this triangle. We will sometimes alternatively write $xyz = \Delta$ for a geodesic triangle with vertices $x$, $y$ and $z$.

The Gromov boundary $\p X$ of a proper geodesic $\delta$-hyperbolic space $X$ is defined to be the collection of all geodesic rays $\gamma: [0,\infty) \rightarrow X$ up to the equivalence relation of two rays being equivalent if they are at a bounded distance from one another. Using the Arzela-Ascoli theorem it is easy to see in a proper geodesic $\delta$-hyperbolic space that for any points $x,y \in X \cup \p X$ there is a geodesic $\gamma$ joining $x$ to $y$. We will continue to write $xy$ for any such choice of geodesic joining $x$ to $y$. We will allow our geodesic triangles $\Delta$ to have vertices on $\p X$, in which case we will still write $\Delta = xyz$ if $\Delta$ has vertices $x,y,z$. We remark that geodesic triangles with vertices in $X \cup \p X$ are $10\delta$-thin by \cite[Lemma 2.2]{Bu20}. 

We next state a useful fact regarding Busemann functions. Let $X$ be a proper geodesic $\delta$-hyperbolic space and let $b: X \rightarrow \R$ be a Busemann function  based at some point $\omega \in \p X$. By \cite[Lemma 2.5]{Bu20}, if $b'$ is any other Busemann function based at $\omega$ then there is a constant $s \in \R$ such that 
\begin{equation}\label{difference constant}
b \doteq_{72\delta} b'+s,
\end{equation}
with $s = 0$ if the geodesic rays associated to $b$ and $b'$ have the same starting point. Thus all Busemann functions based at $\omega$ differ from each other by an additive constant, up to an additive error of $72\delta$.

\subsection{Gromov products}For $x,y,z \in X$ the \emph{Gromov product} of $x$ and $y$ based at $z$ is defined by
\begin{equation}\label{Gromov product}
(x|y)_{z} = \frac{1}{2}(|xz|+|yz|-|xy|). 
\end{equation} 
We can also take the basepoint of the Gromov product to be any function $b \in \hat{\mathcal{B}}(X)$. For $b \in \hat{\mathcal{B}}(X)$ the Gromov product based at $b$ is defined by 
\begin{equation}\label{Gromov Busemann product}
(x|y)_{b} = \frac{1}{2}(b(x) + b(y) - |xy|). 
\end{equation}
For $b \in \mathcal{D}(X)$, $b(x) = d(x,z)+s$ this reduces to the notion of Gromov product in \eqref{Gromov product}, as we have $(x|y)_{b} = (x|y)_{z} + s$. 

The following statements briefly summarize a more extensive discussion of Gromov products in \cite[Section 2]{Bu20}. In particular we give details there for how the precise forms of these statements follow from the corresponding statements in the literature. For these statements it is useful to conceive of the Gromov boundary in an alternative way using Gromov products. Fix $z \in X$. A sequence $\{x_{n}\} \subset X$ \emph{converges to infinity} if $(x_{m}|x_{n}) \rightarrow \infty$ as $m,n \rightarrow \infty$. Two sequences $\{x_{n}\}$ and $\{y_{n}\}$ are \emph{equivalent} if $(x_{n}|y_{n})_{z} \rightarrow \infty$. These notions do not depend on the choice of basepoint $z$, as can easily be checked by the triangle inequality. For a proper geodesic $\delta$-hyperbolic space $X$ the set of equivalence classes of sequences converging to infinity gives an equivalent definition of the Gromov boundary $\p X$, with the equivalence being given by sending a geodesic ray $\gamma:[0,\infty) \rightarrow X$ to the sequence $\{\gamma(n)\}$. For $\xi \in \p X$ and a sequence $\{x_{n}\}$ that converges to infinity we will $\{x_{n}\} \in \xi$ if $\{x_{n}\}$ belongs to the equivalence class of $\xi$. 

These notions may be extended to Busemann functions $b \in \mathcal{B}(X)$ based at a given point $\omega \in \p X$ \cite[Chapter 3]{BS07}. As above a sequence $\{x_{n}\}$ \emph{converges to infinity with respect to $\omega$} if $(x_{m}|x_{n})_{b} \rightarrow \infty$ as $m,n \rightarrow \infty$, and two sequences $\{x_{n}\}$ and $\{y_{n}\}$ are \emph{equivalent with respect to $\omega$} if $(x_{n}|y_{n})_{b} \rightarrow \infty$ as $n \rightarrow \infty$. These definitions do not depend on the choice of Busemann function based at $\omega$ by \eqref{difference constant}. The \emph{Gromov boundary relative to $\omega$} is defined to be the set $\p_{\omega}X$ of all equivalence classes of sequences converging to infinity with respect to $\omega$. By \cite[Proposition 3.4.1]{BS07} we have a canonical identification of $\p_{\omega}X$ with the complement $\p X \backslash\{\omega\}$ of $\omega$ in the Gromov boundary $\p X$. We will thus use the notation $\p_{\omega}X = \p X \backslash \{\omega\}$ throughout the rest of the paper. We extend this notation to $\omega \in X$ by setting $\p_{\omega}X = \p X$ in this case.

Gromov products based at functions $b \in \hat{\mathcal{B}}(X)$ can be extended to points of $\p X$ by defining the Gromov product of equivalence classes $\xi$, $\zeta \in \p X$ based at $b$ to be
\[
(\xi |\zeta)_{b} = \inf \liminf_{n \rightarrow \infty}(x_{n}|y_{n})_{b},
\] 
with the infimum taken over all sequences $\{x_{n}\} \in \xi$, $\{y_{n}\} \in \zeta$; if $b \in \mathcal{B}(X)$ has basepoint $\omega$ then we leave this expression undefined when $\xi = \zeta = \omega$. As a consequence of \cite[Lemma 2.2.2]{BS07}, \cite[Lemma 3.2.4]{BS07}, and the discussion in \cite[Section 2.2]{Bu20}, for any choices of sequences  $\{x_{n}\} \in  \xi$ and  $\{y_{n}\} \in \zeta$ we have
\begin{equation}\label{sequence approximation}
(\xi |\zeta)_{b}  \leq \liminf_{n \rightarrow \infty}(x_{n}|y_{n})_{b} \leq \limsup_{n \rightarrow \infty}(x_{n}|y_{n})_{b} \leq (\xi |\zeta)_{b} + c(\delta),
\end{equation}
and one has the inequality for any $x,y,z \in X \cup \p X_{\omega}X$ we have
\begin{equation}\label{Gromov inequality}
(x |z)_{b} \geq \min \{(x | y)_{b},(y | z)_{b}\} - c(\delta), 
\end{equation}
with the constant $c(\delta)$ depending only on $\delta$ in each case. One may take $c(\delta) = 8\delta$ for $b \in \mathcal{D}(X)$ and $c(\delta)  = 600\delta$ for $b \in \mathcal{B}(X)$ in \eqref{sequence approximation} and \eqref{Gromov inequality}. For $x \in X$ and $\xi \in \p X$ the Gromov product based at $b$ is defined analogously as 
\begin{equation}\label{extended definition}
(x |\xi)_{b} = \inf \liminf_{n \rightarrow \infty}(x|x_{n})_{b},
\end{equation}
and the analogous inequality \eqref{sequence approximation} holds with the same constants.

For a given $z \in X$ and $b \in \mathcal{D}(X)$ defined by $b(x) = |xz|$, we will also write $(\xi|\zeta)_{z} = (\xi|\zeta)_{b}$ for $\xi,\zeta \in X \cup \p X$. We remark that, with the extended definition \eqref{extended definition}, a sequence $\{x_{n}\}$ belongs to the equivalence class of $\xi \in \p X$ if and only if $(x_{n}|\xi)_{z} \rightarrow \infty$ for some (hence any) $z \in X$.


\subsection{Visual metrics}\label{subsec:visual} Let $X$ be a proper geodesic $\delta$-hyperbolic space. Gromov products based at $b \in \hat{\mathcal{B}}(X)$ can be used to define visual metrics on the Gromov boundary $\p X$. We refer to \cite[Chapters 2-3]{BS07} as well as \cite[Section 2.3]{Bu20} for precise details on this topic. We will summarize the results we need here. For $b \in \hat{\mathcal{B}}(X)$ we let $\omega = \omega_{b}$ denote the basepoint of $b$. We recall that we write $\p_{\omega}X = \p X$ when $\omega \in X$ and $\p_{\omega}X = \p X \backslash \{\omega\}$ when $\omega \in \p X$.

For $b \in \hat{\mathcal{B}}(X)$ and $\e > 0$ we define for $\xi$, $\zeta \in \p_{\omega} X$,
\begin{equation}\label{visual quasi}
\theta_{\e,b}(\xi,\zeta) = e^{-\e (\xi|\zeta)_{b}}.
\end{equation}
This may not define a metric on $\p_{\omega} X$, since the triangle inequality may not hold. However there is always $\e_{*} = \e_{*}(\delta) > 0$ depending only on $\delta$ such that for $0 < \e \leq \e_{*}$ the function $\theta_{\e,b}$ is $4$-biLipschitz to a metric $\theta$ on $\p_{\omega} X$. We refer to $\theta$ as a \emph{visual metric} on $\p_{\omega} X$ based at $b$ and refer to $\e$ as the \emph{parameter} of $\theta$. We give $\p X$ the topology associated to a visual metric based at $b$ for any $b \in \mathcal{D}(X)$. When equipped with a visual metric $\p_{\omega}X$ is a locally compact metric space. It is always a compact metric space when $b \in \mathcal{D}(X)$, or when $b \in \mathcal{B}(X)$ and $\omega$ is an isolated point of $\p X$.  


For any visual metrics $\theta$ and $\theta'$ on $\p X$ based at $b,b' \in \hat{\mathcal{B}}(X)$ with parameters $\e,\e' > 0$ respecitvely, the identity map on $\p X$ induces an $\eta$-quasim\"obius homeomorphism,
\[
(\p X,\theta) \rightarrow (\p X,\theta'),
\]
with $\eta$ of the form $\eta(t) = C\left(\delta,\frac{\e'}{\e}\right)t^{\frac{\e'}{\e}}$ \cite[Corollary 5.2.9]{BS07}; this result is stated in the reference for the special case $\e = \e'$, but the general case can be deduced immediately from the observation that for a metric space $(\Omega,d)$ and any $0 < a \leq 1$ the identity map $(\Omega,d) \rightarrow (\Omega,d^{a})$ is always $\eta$-quasim\"obius with $\eta(t) = t^{a}$. 

\subsection{Tripod maps}

We let $\Upsilon$ be the tripod geodesic metric space composed of three copies $L_{1}$, $L_{2}$, and $L_{3}$ of the closed half-line $[0,\infty)$ identified at $0$.  We denote this identification point by $o$ and will refer to $o$ as the \emph{core} of the tripod $\Upsilon$. The space $\Upsilon$ is $0$-hyperbolic and its Gromov boundary $\p \Upsilon$ consists of three points $\zeta_{i}$, $i = 1,2,3$, corresponding to the half-lines $L_{i}$ thought of as geodesic rays starting from $o$. We write $b_{\Upsilon}:=b_{L_{1}}$ for the Busemann function associated to the half-line $L_{1}$ thought of as a geodesic ray in $\Upsilon$. A quick calculation shows that $b_{\Upsilon}$ is given by $b_{\Upsilon}(s) = -s$ for $s \in L_{1}$ and $b_{\Upsilon}(s) = s$ for $s \in L_{2}$ or $s \in L_{3}$.

Throughout this section we let $X$ be a proper geodesic $\delta$-hyperbolic space and let $\Delta = xyz$ be a geodesic triangle in $X$ with vertices $x,y,z \in X \cup \p X$. Let $(\hat{x},\hat{y},\hat{z})$ be an ordered triple of points with $\hat{x} \in yz$, $\hat{y} \in xy$, $\hat{z} \in xy$, such that $|\hat{x}y| = |\hat{z}y|$, $|\hat{x}z| = |\hat{y}z|$, and $|\hat{z}x| = |\hat{z}y|$ (we allow some of these distances to be infinite, in which case the equalities becomes trivial). The \emph{tripod map} $T: \Delta \rightarrow \Upsilon$ associated to such a triple is the unique $1$-Lipschitz map characterized by the properties that $T(\hat{x}) = T(\hat{y}) = T(\hat{z}) = o$, that $T$ maps $xy$ isometrically into $L_{2} \cup L_{3}$, $xz$ isometrically into $L_{1} \cup L_{3}$, $yz$ isometrically into $L_{1} \cup L_{3}$, and $T(x) \in L_{1}$, $T(y) \in L_{2}$, $T(z) \in L_{3}$. When $x \in \p X$ this final inclusion should instead be understood as $\p T(x) = \zeta_{1}$, recalling that $\zeta_{1} \in \p \Upsilon$ is the point defined by $L_{1}$, and similarly for $y$ and $z$. 

A tripod map \emph{associated to $\Delta$} is by definition a $400\delta$-roughly isometric map $T: \Delta \rightarrow \Upsilon$ that is a tripod map associated to some triple of points in $\Delta$. By \cite[Proposition 3.8]{Bu20} such a tripod map exists for every geodesic triangle in $X$ that has vertices in $X \cup \p X$. Since we will use the existence of this tripod map very frequently throughout this paper, we often will not refer back to this section when we use it. For a tripod map $T: \Delta \rightarrow \Upsilon$ associated to a geodesic triangle $\Delta = xyz$ the \emph{equiradial points} for $T$ are by definition the three points $\hat{x} \in yz$, $\hat{y} \in xz$, $\hat{z} \in xy$ used to construct $T$, which can be equivalently thought of as the preimages of the core $o$, $T^{-1}(o) = \{\hat{x},\hat{y},\hat{z}\}$. 

The following proposition computes Busemann functions $b \in \mathcal{B}(X)$ on geodesic triangles $\Delta$ in $X$ that have the basepoint of $b$ as one of their vertices. In general we will not be giving explicit constants in our claims here since we do not provide explicit constants in our final results. 

\begin{prop}\label{compute Busemann}\cite[Proposition 3.10]{Bu20}
Let $\Delta = \omega xy$ be a geodesic triangle in $X$ with $\omega \in \p X$ and $x,y\in X \cup \p_{\omega} X$. Let $b$ be a Busemann function based at $\omega$. Let $T: \Delta \rightarrow \Upsilon$ be a tripod map associated to $\Delta$. Then
\begin{equation}\label{tripod image}
b(p) \doteq_{c(\delta)} b_{\Upsilon}(T(p)) + (x|y)_{b},
\end{equation}
and therefore
\begin{equation}\label{tripod minimum}
(x|y)_{b} \doteq_{c(\delta)} \inf_{p \in xy} b(p). 
\end{equation}
\end{prop}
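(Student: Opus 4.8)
The plan is to deduce the proposition directly from the structure of the tripod map $T\colon\Delta\to\Upsilon$ together with the relation between Busemann functions and Gromov products. The tripod map is a $400\delta$-rough isometry onto $\Upsilon$, and the Busemann function $b_\Upsilon$ on the model tripod is completely explicit ($b_\Upsilon(s)=-s$ on $L_1$, $b_\Upsilon(s)=s$ on $L_2\cup L_3$), so the content of \eqref{tripod image} is really that $b$ on $\Delta$ agrees, up to additive error $c(\delta)$, with the pullback of $b_\Upsilon$ after shifting by the correct constant. Since $\Delta$ is a tree-like object, it suffices to check \eqref{tripod image} at a few controlling points and propagate via the $1$-Lipschitz property of $b$ and the rough isometry of $T$.

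First I would locate the \emph{equiradial points} $\hat x\in\omega y$, $\hat y\in\omega x$, $\hat z\in xy$, with $T(\hat x)=T(\hat y)=T(\hat z)=o$. The key computation is to evaluate $b$ at the core preimage. Since $\hat z$ lies on the geodesic $xy$ and $T$ maps $xy$ isometrically into $L_2\cup L_3$ with $T(\hat z)=o$, the point $\hat z$ is the point of $xy$ from which the two sides $\omega x$ and $\omega y$ "branch off"; standard thin-triangle geometry gives $\dist(\hat z,\omega x)\doteq_{c(\delta)}0$ and likewise for $\omega y$. Using that $b$ is a Busemann function based at $\omega$, the value $b(p)$ along a geodesic ray toward $\omega$ decreases at unit speed, so one computes $b(\hat z)\doteq_{c(\delta)} (x|y)_b$: indeed, $\hat z$ is roughly the point realizing $\inf_{p\in xy} b(p)$, and one checks $\inf_{p\in xy} b(p)=\tfrac12(b(x)+b(y)-|xy|)+O(\delta)=(x|y)_b+O(\delta)$ by splitting $xy$ at its "midpoint in the $b$-direction" and using the $1$-Lipschitz bound both ways. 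This simultaneously proves \eqref{tripod minimum} once \eqref{tripod image} is in hand (take the infimum over $p\in xy$ and note $\inf_{L_2\cup L_3} b_\Upsilon=0$, attained at $o$).

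Next I would propagate from $\hat z$ to an arbitrary $p\in\Delta$. Any such $p$ lies on one of the three sides. If $p\in xy$, then $|p\hat z|=|T(p)T(\hat z)|=|T(p)|$ (an isometry onto $L_2\cup L_3$), and moving from $\hat z$ toward $x$ or $y$ along $xy$ increases $b$ at unit speed up to $O(\delta)$ error — because on the $x$-side the geodesic runs roughly parallel to a geodesic ray from $\omega$ through $x$ and $b$ is $1$-Lipschitz with the opposite monotonicity only on the $\omega$-side of $\hat z$ — matching the behavior of $b_\Upsilon(T(p))=|T(p)|$. If $p\in\omega x$, the segment from $\hat y$ to $x$ maps isometrically into $L_3$ (so $b_\Upsilon(T(p))=|T(p)|$ there) while the segment from $\hat y$ toward $\omega$ maps into $L_1$ (so $b_\Upsilon(T(p))=-|T(p)|$), and these match the unit-speed decrease of $b$ toward $\omega$ respectively increase toward $x$; the symmetric argument handles $p\in\omega y$. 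In each case the accumulated error is $c(\delta)$ coming from the rough isometry constant of $T$, the thinness constant $10\delta$ of $\Delta$, and \eqref{difference constant}.

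\textbf{The main obstacle} is the bookkeeping of constants and, more substantively, handling the cases where $x$ or $y$ lies on $\p_\omega X$ rather than in $X$, so that some of the distances $|xy|$, $b(x)$ are infinite and the "unit-speed" comparisons must be phrased via sequences $\{x_n\}\in x$, $\{y_n\}\in y$ using the approximation \eqref{sequence approximation} and the extended Gromov-product definition \eqref{extended definition}; one argues on finite subsegments and passes to the limit. This is exactly the setting already treated in \cite[Proposition 3.10]{Bu20}, so I would ultimately cite that proof for the full constant-tracking, using the sketch above as the conceptual skeleton. The relation \eqref{tripod minimum} is then immediate from \eqref{tripod image} by infimizing over $p\in xy$.
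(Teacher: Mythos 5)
The paper does not reprove Proposition~\ref{compute Busemann}: it is imported verbatim from \cite[Proposition 3.10]{Bu20}, so there is no in-paper argument to compare against. Your conceptual skeleton matches the one used in \cite{Bu20}: first pin down $b$ at the equiradial point on $xy$ by combining (i) the $1$-Lipschitz lower bound $b(p)\ge (x|y)_b$ for all $p\in xy$, and (ii) the thin-triangle fact that the equiradial point is within $c(\delta)$ of the edge $\omega x$ (and of $\omega y$), along which $b$ varies at unit speed up to $c(\delta)$ by the rough-monotonicity of Busemann functions along geodesics asymptotic to $\omega$ (\cite[Lemma 2.6]{Bu20}); then propagate to arbitrary $p\in\Delta$ by the same unit-speed comparison on each edge against $b_\Upsilon\circ T$, picking up only the rough-isometry constant of $T$ and the thinness constant. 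You also correctly note that the ideal-vertex cases are handled by approximating with interior points via \eqref{sequence approximation} and the extended definition \eqref{extended definition}, and that \eqref{tripod minimum} drops out of \eqref{tripod image} by infimizing and using $\inf_{L_2\cup L_3}b_\Upsilon=0$.

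One small slip worth fixing for the record: with the vertex convention $T(\omega)\to L_1$, $T(x)\to L_2$, $T(y)\to L_3$, the edge $\omega x$ maps into $L_1\cup L_2$, so the segment from $\hat y$ to $x$ lands in $L_2$, not $L_3$ as you wrote (and symmetrically $\hat x$-to-$y$ lands in $L_3$). Since $b_\Upsilon(s)=s$ identically on $L_2\cup L_3$, this mislabeling has no effect on the computed value of $b_\Upsilon\circ T$, so the argument is unaffected. (The paper's own description of which legs each edge maps to also contains a typo, which may have propagated into your sketch.) Finally, do keep in mind that the elementary $1$-Lipschitz estimate only yields the one-sided bound $\inf_{xy}b\ge(x|y)_b$; the matching upper bound is where $\delta$-hyperbolicity genuinely enters, as you correctly flag via the branching-point argument.
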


The following direct consequence of Proposition \ref{compute Busemann} will be more useful for computing in practice. The notation $(-\infty,a]$ should be interpreted as $(-\infty,a] = \R$ when $a = \infty$.

\begin{prop}\label{compute consequence}
Let $\Delta = \omega x_{1}x_{2}$ be a geodesic triangle in $X$ with $\omega \in \p X$ and $x_{1},x_{2}\in X \cup \p_{\omega} X$. Let $b$ be a Busemann function based at $\omega$. Let $T: \Delta \rightarrow \Upsilon$ be a tripod map associated to $\Delta$. Then there are parametrizations $\gamma_{i}:(-\infty,a_{i}] \rightarrow X$ of $\omega x_{i}$, $a_{i} \in [0,\infty]$, $i = 1,2$, and $\sigma: I \rightarrow X$ of $x_{1}x_{2}$ with $0 \in I$ such that the following properties hold,
\begin{enumerate}
\item The equiradial points of $T$ on $\gamma_{1}$, $\gamma_{2}$, and $\sigma$ are $\gamma_{1}(0)$, $\gamma_{2}(0)$, and $\sigma(0)$ respectively. In particular $\diam\{\gamma_{1}(0),\gamma_{2}(0),\sigma(0)\} \leq c(\delta)$.
\item For $t \leq 0$ we have $|\gamma_{1}(t)\gamma_{2}(t)| \leq c(\delta)$.
\item For $t \in I_{\leq 0}$ we have $|\gamma_{1}(-t)\sigma(t)| \leq c(\delta)$ and for $t \in I_{\geq 0}$ we have $|\gamma_{2}(t)\sigma(t)| \leq c(\delta)$.
\item For $t \in (-\infty,a_{i}]$ we have $b(\gamma_{i}(t)) \doteq_{c(\delta)} t+(x_{1}|x_{2})_{b}$.
\item For $t \in I$ we have $b(\sigma(t)) \doteq_{c(\delta)} |t| + (x_{1}|x_{2})_{b}$. 
\end{enumerate} 
\end{prop}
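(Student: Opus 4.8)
The plan is to extract the desired parametrizations directly from the tripod map $T:\Delta \to \Upsilon$ and the rough-equality \eqref{tripod image} of Proposition~\ref{compute Busemann}, translating geometric statements about $\Upsilon$ back to $\Delta$ via the fact that $T$ is $400\delta$-roughly isometric. Recall from the construction that $T$ maps $\omega x_1$ isometrically into $L_2 \cup L_3$ (with $\p T(x_1) = \zeta_2$ — up to relabeling the legs so that the conclusions match the statement) and $\omega x_2$ isometrically into one of the other pairs of legs, with the three equiradial points $\hat{x}_1, \hat{x}_2, \sigma(0)$ all mapping to the core $o$. First I would fix parametrizations $\gamma_i:(-\infty,a_i]\to X$ of $\omega x_i$ so that $\gamma_i(0)$ is the equiradial point on that edge, oriented so that $\gamma_i(t)\to\omega$ as $t\to-\infty$; since $T$ is isometric on each leg of $\Upsilon$ meeting at $o$ and the leg toward $\omega$ is $L_1$, we get $b_\Upsilon(T(\gamma_i(t))) \doteq t$ for $t\le 0$ (it is exactly $-|T(\gamma_i(t))|$ on $L_1$, with the sign matching $b_\Upsilon(s)=-s$ on $L_1$), and similarly on the $t\ge 0$ portion $T(\gamma_i(t))$ lies on one of $L_2, L_3$ so $b_\Upsilon(T(\gamma_i(t))) = t$ there as well; in all cases $b_\Upsilon(T(\gamma_i(t))) = t$ exactly. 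Plugging into \eqref{tripod image} gives item (4). Parametrize $\sigma$ on $x_1x_2$ with $\sigma(0)$ the equiradial point on that edge; then $T\circ\sigma$ traverses $L_2\cup L_3$ passing through $o$ at $t=0$, so $b_\Upsilon(T(\sigma(t))) = |t|$ exactly, and \eqref{tripod image} gives item (5).

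The remaining items (1)--(3) are the comparison statements between the three edges, and these follow from the thinness/tripod structure together with $T$ being roughly isometric. For item (1): $T(\gamma_1(0)) = T(\gamma_2(0)) = T(\sigma(0)) = o$, and since $T$ is $400\delta$-roughly isometric, any two points with the same image are within $400\delta$ of each other, giving $\diam\{\gamma_1(0),\gamma_2(0),\sigma(0)\} \le c(\delta)$. For item (2): for $t\le 0$, both $\gamma_1(t)$ and $\gamma_2(t)$ lie on the respective geodesic segments toward $\omega$ at the same parameter distance from the core; their images $T(\gamma_1(t)), T(\gamma_2(t))$ both lie on $L_1$ at distance $|t|$ from $o$, hence coincide, and rough isometry again gives $|\gamma_1(t)\gamma_2(t)| \le c(\delta)$. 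For item (3): for $t \in I_{\le 0}$, the point $\sigma(t)$ maps into $L_2\cup L_3$ at distance $|t|$ from $o$ on the side toward $x_1$; matching this with $\gamma_1(-t)$, whose image lies on $L_2$ (or $L_3$) at the same distance from $o$, we get equal $T$-images and hence $|\gamma_1(-t)\sigma(t)| \le c(\delta)$; the case $t \in I_{\ge 0}$ is symmetric with $\gamma_2$. One has to be slightly careful matching which leg each edge occupies — this is bookkeeping dictated by the definition of the tripod map and by choosing the orientation of $\sigma$ so that the $x_1$-side corresponds to $I_{\le 0}$.

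The one genuinely nontrivial point — and the step I expect to be the main obstacle — is verifying that the parameters $a_i$ and the interval $I$ can be chosen consistently so that the stated domains are exactly right, in particular handling the cases where $x_i \in \p_\omega X$ (so $a_i = \infty$) versus $x_i \in X$ (so $a_i$ finite), and ensuring the parametrization of $\sigma$ has $0 \in I$ with the correct sign convention relative to $\gamma_1$ and $\gamma_2$. When $x_i \in \p_\omega X$ one uses that the tripod map for a triangle with an ideal vertex still exists by \cite[Proposition 3.8]{Bu20} and that $\p T(x_i) = \zeta_j$ for the appropriate leg, so that $T\circ\gamma_i$ parametrizes all of $L_1 \cup L_j$ and the domain is genuinely $(-\infty,\infty) = \R$; the formula $b_\Upsilon(T(\gamma_i(t)))=t$ persists on the whole line. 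Once the parametrizations are pinned down, all five items are immediate consequences of Proposition~\ref{compute Busemann} and the defining properties of the tripod map, with every constant $c(\delta)$ absorbing the $400\delta$ rough-isometry error of $T$ together with the $c(\delta)$ error already present in \eqref{tripod image}.
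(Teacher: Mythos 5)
Your proposal is correct and follows essentially the same approach as the paper: define $\gamma_i$ and $\sigma$ by pulling back, via $T$, the identifications of $L_1 \cup L_j$ and $L_2 \cup L_3$ with subsets of $\R$ (with $L_1$, resp.\ $L_2$, reversed and identified with $(-\infty,0]$), so that each parametrization is an arc-length parametrization centered at the equiradial point, and then read (4) and (5) off from Proposition \ref{compute Busemann} while obtaining (1)--(3) from the fact that the $400\delta$-rough isometry $T$ identifies the relevant pairs of points with the same image in $\Upsilon$. The paper's proof is terse and leaves the verification of (1)--(5) to the reader; your write-up fills in those verifications correctly, including the observations that $b_\Upsilon(T(\gamma_i(t))) = t$ and $b_\Upsilon(T(\sigma(t))) = |t|$ exactly, and that $a_i = \infty$ when $x_i \in \p_\omega X$.
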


\begin{proof}
We identify $L_{1} \cup L_{2}$ and $L_{1} \cup L_{3}$ with $\R$ by reversing the orientation of $L_{1}$ and identifying it with $(-\infty,0]$. We then take $(-\infty,a_{i}]$ to be the image under $T$ of $\omega x_{i}$ for $i = 1,2$ and take $\gamma_{i}:(-\infty,a_{i}] \rightarrow X$ to be the inverse of $T$ restricted to the edge $\omega x_{i}$. Similarly for the edge $x_{1}x_{2}$ we identify $L_{2} \cup L_{3}$ with $\R$ by reversing the orientation of $L_{2}$ and identifying it with $(-\infty,0]$ in this case. We set $I$ to be the image of $x_{1}x_{2}$ under $T$ and define $\sigma: I \rightarrow X$ to be the inverse of $T$ restricted to $x_{1}x_{2}$. Properties (1)-(5) of these parametrizations can then be verified directly from the conclusions of Proposition \ref{compute Busemann} and the fact that $T$ is $400\delta$-roughly isometric. 
\end{proof}




We'll need a variant of Proposition \ref{compute Busemann} when we consider $\omega$ as a point of $X$ instead. 

\begin{lem}\label{interior basepoint}
Let $\Delta = xyz$ be a geodesic triangle in $X$ with $x \in X$ and let $T: \Delta \rightarrow \Upsilon$ be a tripod map associated to $\Delta$. Let $\bar{y} \in xy$ and $\bar{z} \in xz$ be the points such that $T(\bar{y}) = T(\bar{z}) = o$. Then
\[
|x\bar{y}| \doteq_{c(\delta)} (y|z)_{x} \doteq_{c(\delta)} |x\bar{z}|,
\]
and consequently,
\begin{equation}\label{rough product}
(y|z)_{x} \doteq_{c(\delta)} \dist(x,yz).
\end{equation}
\end{lem}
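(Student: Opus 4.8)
The statement to prove is Lemma \ref{interior basepoint}: for a geodesic triangle $\Delta = xyz$ with $x \in X$, and a tripod map $T: \Delta \rightarrow \Upsilon$ with equiradial points $\bar{y} \in xy$, $\bar{z} \in xz$ satisfying $T(\bar{y}) = T(\bar{z}) = o$, one has $|x\bar{y}| \doteq_{c(\delta)} (y|z)_{x} \doteq_{c(\delta)} |x\bar{z}|$, and consequently $(y|z)_{x} \doteq_{c(\delta)} \dist(x, yz)$.

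The approach is to exploit the defining property of the tripod map: $T$ is $400\delta$-roughly isometric, maps $xy$ and $xz$ isometrically into $L_{2} \cup L_{3}$ and $L_{1} \cup L_{3}$ respectively (with $T(x) \in L_{3}$ in this labeling since $x$ plays the role of the third vertex), and sends the equiradial points to the core $o$. First I would observe that since $T$ restricted to the edge $xy$ is an isometric embedding into a copy of $[0,\infty)$ containing $T(x)$ at distance $|x\bar{y}|$ from $o = T(\bar{y})$, and likewise $|x\bar z| = d_\Upsilon(T(x), o)$ up to the geodesic-isometry on that edge, we get $|x\bar y| = d_\Upsilon(T(x),o) = |x\bar z|$ exactly from the tripod structure on the nose (or up to $c(\delta)$ if one uses the rough version). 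Next, I would compute $(y|z)_x$ directly from the definition $(y|z)_x = \tfrac12(|xy| + |xz| - |yz|)$ by pulling everything back through $T$: since $T$ is $400\delta$-roughly isometric, $|xy| \doteq d_\Upsilon(T(x),T(y))$, $|xz| \doteq d_\Upsilon(T(x),T(z))$, $|yz| \doteq d_\Upsilon(T(y),T(z))$, so $(y|z)_x \doteq (T(y)|T(z))_{T(x)}$ up to $c(\delta)$. In the tripod $\Upsilon$ the Gromov product $(T(y)|T(z))_{T(x)}$ is computed explicitly: with $T(x) \in L_3$, $T(y) \in L_2$, $T(z) \in L_1$, the three geodesic segments from $T(x)$ to $T(y)$, from $T(x)$ to $T(z)$, and from $T(y)$ to $T(z)$ all pass through $o$, so $d_\Upsilon(T(x), o) = (T(y)|T(z))_{T(x)}$ exactly. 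Combining, $(y|z)_x \doteq_{c(\delta)} d_\Upsilon(T(x), o) \doteq_{c(\delta)} |x\bar y|$, and symmetrically $\doteq_{c(\delta)} |x\bar z|$.

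For the consequence \eqref{rough product}, I would argue in two directions. For the upper bound $\dist(x, yz) \lesssim (y|z)_x$: let $\hat x \in yz$ be the third equiradial point, so $T(\hat x) = o$; then $|x\hat x| \doteq_{400\delta} d_\Upsilon(T(x), o) \doteq_{c(\delta)} (y|z)_x$, giving $\dist(x,yz) \le |x\hat x| \le (y|z)_x + c(\delta)$. For the reverse bound $(y|z)_x \lesssim \dist(x, yz)$: take any $p \in yz$; then by $400\delta$-rough isometry of $T$, $|xp| \doteq d_\Upsilon(T(x), T(p))$, and since $T(p)$ lies on the image of $yz$ which is contained in $L_1 \cup L_3$ and passes through $o$ with $T(x) \in L_3$, we have $d_\Upsilon(T(x), T(p)) \ge d_\Upsilon(T(x), o) = (T(y)|T(z))_{T(x)}$; hence $|xp| \ge (y|z)_x - c(\delta)$ for every $p \in yz$, so $\dist(x, yz) \ge (y|z)_x - c(\delta)$.

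The main obstacle is bookkeeping the additive constants correctly through the $400\delta$-rough isometry — one applies it three times in computing the Gromov product, so the constant accumulates, but it stays bounded by $c(\delta)$, which is all that is claimed. A minor subtlety is that $T$ restricted to each edge is genuinely isometric (not just roughly so) by the definition of tripod map, so the cleanest route uses that exactness on single edges to pin down $|x\bar y| = d_\Upsilon(T(x), o)$ and reserves the rough-isometry error for the Gromov-product computation which inherently involves all three edges; alternatively one can simply cite Proposition \ref{compute Busemann} with $\omega$ replaced by a distant point on the edge and pass to a limit, but the direct tripod computation is shorter and self-contained.
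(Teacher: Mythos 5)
Your argument takes the same route as the paper's proof — pull the Gromov product back through the tripod map $T$, compute it exactly in $\Upsilon$, and then read off $(y|z)_x \doteq |T(x)o| = |x\bar y| = |x\bar z|$ up to the rough-isometry error — and your two-direction argument for \eqref{rough product} is a slightly more explicit version of the paper's one-sentence observation. However, there is one genuine gap: the lemma is stated with only $x \in X$, and the vertices $y$ and $z$ are allowed to lie in $\p X$ (the paper's conventions let geodesic triangles have ideal vertices, and this lemma is in fact applied later with $y,z \in \p X$, e.g.\ in the proof of Proposition \ref{rough star boundary} and of Proposition \ref{uniform starlike}). In that case the formula $(y|z)_x = \tfrac12(|xy|+|xz|-|yz|)$ that you use is an indeterminate $\infty - \infty$; the Gromov product has to be computed through the extended definition \eqref{extended definition}. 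The paper deals with this by first choosing finite approximating points $y' \in \bar y y$ and $z' \in \bar z z$ with $(y'|z')_x \doteq_{c(\delta)} (y|z)_x$, which is legitimate by \eqref{sequence approximation}, and only then transferring the (now finite) Gromov product through $T$. Your proof is correct as written when $y,z \in X$, but to cover the statement as it is actually used you need to insert this approximation step. (Your tripod labeling $T(x) \in L_3$ differs from the paper's convention $T(x)\in L_1$, $T(y)\in L_2$, $T(z)\in L_3$, but that is purely cosmetic.)
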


\begin{proof}
Let $T: \Delta \rightarrow \Upsilon$ be a tripod map associated to $\Delta$. By \eqref{sequence approximation} we can find $y' \in \bar{y}y$, $z' \in \bar{z}z$ such that $(y'|z')_{x} \doteq_{c(\delta)} (y|z)_{x}$. Then $(T(y')|T(z'))_{T(x)} \doteq_{c(\delta)} (y'|z')_{x}$. Since $T(y') \in L_{2}$ and $T(z') \in L_{3}$, a quick calculation shows that 
\[
(T(y')|T(z'))_{T(x)} = |T(x)o| \doteq_{c(\delta)} |x\bar{y}| \doteq_{c(\delta)} |x\bar{z}|.
\] 
This gives the first assertion. The second follows from the combined observations that $T$ is $c(\delta)$-roughly isometric, that $o \in T(yz) \subset L_{2} \cup L_{3}$, and that $|T(x)o| = \dist(T(x),L_{2} \cup L_{3})$.
\end{proof}

Inequality \eqref{rough product} actually holds with $c(\delta) = 8\delta$, see \cite[(3.2)]{BHK}.  

Lastly we record the following useful inequality regarding geodesic rays. The lemma below rephrases the conclusions of \cite[Lemma 3.6]{Bu20}.

\begin{lem}\label{infinite triangle}
Let $\gamma,\sigma:[0,\infty) \rightarrow X$ be geodesic rays with the same endpoint in $\p X$. Then for all $t \geq 0$ we have
\[
|\gamma(t)\sigma(t)| \leq 3|\gamma(0)\sigma(0)|+8\delta. 
\]
\end{lem}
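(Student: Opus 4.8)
Here is the plan. Rather than working directly with the ideal geodesic triangle having $\xi\in\p X$ as a vertex (such triangles are only $10\delta$-thin), I would approximate $\xi$ by the interior points $\gamma(N),\sigma(N)$ for large $N$ and exploit the genuine $\delta$-thinness of triangles with all vertices in $X$, chaining two such comparisons together. (Alternatively, since the statement is a restatement of \cite[Lemma 3.6]{Bu20}, one could simply invoke that lemma; I describe a self-contained argument below.)

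Write $x=\gamma(0)$, $y=\sigma(0)$, $d=|xy|$, fix a geodesic $\tau=xy$, and fix $t\ge 0$. Since $\gamma$ and $\sigma$ determine the same point of $\p X$ we have $(\gamma(N)|\sigma(N))_{x}\to\infty$, so I would first choose $N>t$ with $(\gamma(N)|\sigma(N))_{x}>t+\delta$ and fix a geodesic $\alpha=x\sigma(N)$ parametrized by arclength from $x$. \emph{Step 1:} apply $\delta$-thinness to the triangle $x\,\gamma(N)\,\sigma(N)$ with edges $\gamma|_{[0,N]}$, $\alpha$, $\gamma(N)\sigma(N)$; since $\dist(x,\gamma(N)\sigma(N))\ge(\gamma(N)|\sigma(N))_{x}>t+\delta=|x\gamma(t)|+\delta$, the point $\gamma(t)$ cannot be $\delta$-close to the edge $\gamma(N)\sigma(N)$, hence it is $\delta$-close to some $\alpha(s)$, and then $|s-t|=\bigl||x\alpha(s)|-|x\gamma(t)|\bigr|\le\delta$. \emph{Step 2:} apply $\delta$-thinness to the triangle $x\,y\,\sigma(N)$ with edges $\tau$, $\sigma|_{[0,N]}$, and the \emph{same} $\alpha$; then $\alpha(s)$ is $\delta$-close either to a point of $\tau$ — whence $s=|x\alpha(s)|\le d+\delta$, so $t\le d+2\delta$ and $|\gamma(t)\sigma(t)|\le|\gamma(t)x|+|xy|+|y\sigma(t)|=2t+d\le 3d+4\delta$ — or to some $\sigma(u)$ — whence comparing the estimates $|x\sigma(u)|\in[s-\delta,s+\delta]$ and $|x\sigma(u)|\in[u-d,u+d]$ gives $|s-u|\le d+\delta$, so $|\gamma(t)\sigma(t)|\le|\gamma(t)\alpha(s)|+|\alpha(s)\sigma(u)|+|\sigma(u)\sigma(t)|\le 2\delta+|u-t|\le d+4\delta$. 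In every case $|\gamma(t)\sigma(t)|\le 3d+4\delta\le 3d+8\delta$, and since $t$ was arbitrary this proves the lemma (in fact with the slightly better constant $4\delta$).

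The one point requiring care is precisely the passage through the boundary point $\xi$: because triangles with a vertex on $\p X$ are only $10\delta$-thin, I avoid the ideal triangle $x\,y\,\xi$ and instead link two finite, honestly $\delta$-thin triangles through the common auxiliary geodesic $x\sigma(N)$, and I must guarantee that the edge $\gamma(N)\sigma(N)$, which hugs $\xi$, stays bounded away from $\gamma(t)$ — this is exactly what the choice $(\gamma(N)|\sigma(N))_{x}>t+\delta$ secures. Everything else is routine triangle-inequality bookkeeping in the quantity $d=|\gamma(0)\sigma(0)|$; in particular the inequality $\dist(x,pq)\ge(p|q)_{x}$ used above follows at once from adding two triangle inequalities (compare \eqref{rough product}).
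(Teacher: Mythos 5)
Your argument is correct, and in fact yields the slightly sharper constant $3|\gamma(0)\sigma(0)|+4\delta$. The key mechanism — replacing the ideal triangle $\gamma(0)\,\sigma(0)\,\xi$ (which would only be $10\delta$-thin) by the two finite triangles $x\,\gamma(N)\,\sigma(N)$ and $x\,y\,\sigma(N)$ sharing the auxiliary geodesic $\alpha = x\sigma(N)$, and using the lower bound $\dist(x,\gamma(N)\sigma(N))\ge(\gamma(N)|\sigma(N))_x>t+\delta$ to force the thin-triangle projection of $\gamma(t)$ onto $\alpha$ rather than onto the far edge — is sound, and the interval arithmetic in the two sub-cases of Step~2 checks out ($t\le d+2\delta$ in the $\tau$-case, $|u-s|\le d+\delta$ in the $\sigma$-case). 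Since the paper itself does not prove the lemma but only cites \cite[Lemma 3.6]{Bu20}, your write-up is a genuine self-contained alternative; without access to that reference I cannot say whether it follows the same route, but the ``two finite $\delta$-thin triangles linked by a common edge'' device is a standard and clean way to avoid the cruder constants one would get from applying $10\delta$-thinness to the ideal triangle directly, and your accounting is careful enough that all the reductions (choice of $N>t$ with $(\gamma(N)|\sigma(N))_x>t+\delta$, the use of $\dist(x,pq)\ge(p|q)_x$, and the arclength parametrization of $\alpha$ so that $|x\alpha(s)|=s$) are in place. No gaps.
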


\subsection{Rough starlikeness} 
In this section we will prove Proposition \ref{all star}. For a proper geodesic $\delta$-hyperbolic space $X$ and any $x \in X$ we define
\[
S(x)=\sup_{\omega \in \p X} \inf_{\xi \in \p X} (\omega|\xi)_{x}.
\]
Despite its appearance, it is actually fairly easy to compute upper bounds on $S(x)$ as computing the Gromov product of any two distinct points in $\p X$ will always produce an upper bound. We will use the explicit constant $c(\delta) = 8\delta$ noted after the statement of \eqref{Gromov inequality}.

\begin{lem}\label{two points}
For any $\omega,\xi \in \p X$ we have
\[
S(x) \leq (\omega|\xi)_{x} + 8\delta.
\]
In particular $S(x) < \infty$ if $\p X$ contains at least two points. 
\end{lem}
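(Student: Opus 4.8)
The plan is to prove that $S(x) \leq (\omega|\xi)_{x} + 8\delta$ for any two points $\omega, \xi \in \p X$ by a direct application of the hyperbolic inequality \eqref{Gromov inequality}. Recall that $S(x) = \sup_{\zeta \in \p X} \inf_{\nu \in \p X} (\zeta|\nu)_{x}$. Fix $\omega, \xi \in \p X$. To bound the supremum it suffices to bound $\inf_{\nu \in \p X} (\zeta|\nu)_{x}$ for each $\zeta \in \p X$; and to bound this infimum it suffices to exhibit a single $\nu$ for which $(\zeta|\nu)_{x}$ is small. The idea is that at least one of $\omega, \xi$ must be ``far'' from $\zeta$ in the sense of the Gromov product, since $\omega$ and $\xi$ are themselves at bounded Gromov product $(\omega|\xi)_{x}$.

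Concretely, fix $\zeta \in \p X$. Applying \eqref{Gromov inequality} with the triple $\omega, \zeta, \xi$ (using the basepoint $x \in X$, for which the relevant constant is $c(\delta) = 8\delta$) gives
\[
(\omega|\xi)_{x} \geq \min\{(\omega|\zeta)_{x}, (\zeta|\xi)_{x}\} - 8\delta,
\]
so $\min\{(\omega|\zeta)_{x}, (\zeta|\xi)_{x}\} \leq (\omega|\xi)_{x} + 8\delta$. Hence at least one of $(\omega|\zeta)_{x}$ and $(\zeta|\xi)_{x}$ is at most $(\omega|\xi)_{x} + 8\delta$; choosing $\nu$ to be whichever of $\omega, \xi$ achieves this minimum, we obtain $\inf_{\nu \in \p X}(\zeta|\nu)_{x} \leq (\omega|\xi)_{x} + 8\delta$. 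Since $\zeta \in \p X$ was arbitrary, taking the supremum over $\zeta$ yields $S(x) \leq (\omega|\xi)_{x} + 8\delta$. For the final sentence, if $\p X$ contains at least two points then we may pick distinct $\omega \neq \xi$ in $\p X$, and then $(\omega|\xi)_{x} < \infty$ (this is part of the standing facts about Gromov products of distinct boundary points, e.g.\ it follows from \eqref{sequence approximation} applied to sequences converging to the two distinct points, which have bounded liminf of Gromov products), so $S(x) < \infty$.

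There is essentially no obstacle here; the only point requiring a small amount of care is to confirm that \eqref{Gromov inequality} is valid in the form stated when one or more of the three entries lies on the Gromov boundary $\p X$ rather than in $X$ — but the excerpt explicitly states that \eqref{Gromov inequality} holds for $x, y, z \in X \cup \p_\omega X$ with $c(\delta) = 8\delta$ when the basepoint $b$ is in $\mathcal{D}(X)$ (here the basepoint is the distance function at $x \in X$), so this is exactly the regime we need. One should also note that we are not asserting $\omega, \xi, \zeta$ are distinct; \eqref{Gromov inequality} holds regardless, and the degenerate cases (e.g.\ $\zeta = \omega$) only make the bound easier.
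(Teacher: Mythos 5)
Your proof is correct and takes essentially the same approach as the paper: both rest on applying \eqref{Gromov inequality} with constant $8\delta$ to the triple of boundary points $\omega,\zeta,\xi$ based at $x \in X$. The paper phrases the argument via the auxiliary quantity $S(x,\omega) = \inf_{\xi \in \p X}(\omega|\xi)_{x}$ and the approximate invariance $S(x,\omega) \geq S(x,\zeta) - 8\delta$, while you bound $\inf_{\nu \in \p X}(\zeta|\nu)_{x}$ directly for each $\zeta$, but this is only a cosmetic rearrangement of the same estimate.
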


\begin{proof}
Define $S(x,\omega) = \inf_{\xi \in \p X} (\omega|\xi)_{x}$ for each $\omega \in \p X$. By fixing $\omega, \zeta \in \p X$ and minimizing \eqref{Gromov inequality} over all $\xi \in \p X$, we obtain
\[
S(x,\omega) \geq \min\{S(x,\zeta),(\omega|\zeta)_{x}\} - 8\delta = S(x,\zeta) - 8\delta,
\]
by the definition of $S(x,\zeta)$. By maximizing this inequality over all $\zeta \in \p X$ we then obtain
\[
S(x) \leq S(x,\omega)+8\delta,
\]
which holds for any $\omega \in \p X$. This gives the claim of the lemma. 
\end{proof}

The next proposition summarizes the relations between rough starlikeness from points of $X$ and rough starlikeness from points of $\p X$.

\begin{prop}\label{rough star boundary}
Let $X$ be a proper geodesic $\delta$-hyperbolic space such that $\p X$ contains at least two points. Let $K \geq 0$ be given. Then
\begin{enumerate}
\item If $X$ is $K$-roughly starlike from $\omega \in \p X$ then it is $(K+10\delta)$-roughly starlike from any point $x \in  X \cup \p X$. 
\item If $X$ is $K$-roughly starlike from $x \in X$ then it is $(K+S(x)+c(\delta))$-roughly starlike from any point $\omega \in \p X$. 
\end{enumerate}
\end{prop}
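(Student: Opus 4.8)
The plan is to prove the two assertions separately; both reduce to the $10\delta$-thinness of geodesic triangles with vertices in $X \cup \p X$ combined with the tripod estimates of Lemma~\ref{interior basepoint}.

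For (1), fix $x \in X \cup \p X$ and $p \in X$; we may assume $x \neq \omega$. Since $X$ is $K$-roughly starlike from $\omega$, there is a geodesic line $\sigma$ starting from $\omega$, with other endpoint $\xi \in \p X$, and a point $q \in \sigma$ with $|pq| \le K$. Form the geodesic triangle on $x,\omega,\xi$ using $\sigma$ as the edge $\omega\xi$. By $10\delta$-thinness, $q$ lies within $10\delta$ of one of the edges $x\omega$, $x\xi$. Each of these two edges is a geodesic issuing from $x$ of exactly the type required by Definition~\ref{def:rough star} --- a ray if $x \in X$, a line through $x$ if $x \in \p X$ --- and it is within distance $K + 10\delta$ of $p$. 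This is the whole argument; there is no obstacle.

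For (2), fix $\omega \in \p X$ and $p \in X$, and let $\mu \colon [0,\infty) \to X$ be a geodesic ray from $x$ to $\omega$. From the definition of $S(x)$ (and using that $\p X$ has at least two points) choose $\xi'' \in \p X \setminus\{\omega\}$ with $(\omega|\xi'')_x \le S(x) + c(\delta)$, and let $\lambda$ be a geodesic line from $\xi''$ to $\omega$. The heart of the proof is the estimate $\dist(\mu(s),\lambda) \le (\omega|\xi'')_x + c(\delta)$ for every $s \ge 0$. I would prove it with a tripod map $T$ on the triangle $x\omega\xi''$: let $\bar\omega \in \mu$ and $\hat x \in \lambda$ be the equiradial points, so $T(\bar\omega) = T(\hat x) = o$ and $|x\bar\omega| \doteq (\omega|\xi'')_x$ by Lemma~\ref{interior basepoint}. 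For $s \le |x\bar\omega|$ the point $T(\mu(s))$ lies on the segment $[T(x),o]$ at distance $|x\bar\omega| - s$ from $o = T(\hat x)$, so $|\mu(s)\hat x| \le |x\bar\omega| - s + c(\delta) \le (\omega|\xi'')_x + c(\delta)$; for $s > |x\bar\omega|$ the point $T(\mu(s))$ lies on the leg of $\Upsilon$ pointing toward $\omega$, which is entirely covered by $T(\lambda)$ (the image of a bi-infinite geodesic from $\omega$ to $\xi''$), whence $\dist(\mu(s),\lambda) \le c(\delta)$.

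With this estimate in hand I would split into cases. If $\dist(p,\mu) \le K + 10\delta$, choose $s$ with $|p\mu(s)| \le K + 10\delta$ and add the estimate to conclude $\dist(p,\lambda) \le K + S(x) + c(\delta)$, with $\lambda$ a geodesic line from $\omega$. If $\dist(p,\mu) > K + 10\delta$, use $K$-rough starlikeness from $x$ to get a geodesic ray $\sigma$ from $x$ with endpoint $\xi \in \p X$ and $q \in \sigma$ with $|pq| \le K$; here $\xi \neq \omega$, since otherwise $\sigma$ and $\mu$ would $8\delta$-fellow-travel by Lemma~\ref{infinite triangle} and force $\dist(p,\mu) \le K + 8\delta$. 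Forming the triangle on $x,\omega,\xi$ with edge $x\omega = \mu$, $10\delta$-thinness puts $q$ within $10\delta$ of $\mu$ or of the edge $\omega\xi$; the first is impossible since $\dist(p,\mu) > K + 10\delta$, so $\dist(p,\omega\xi) \le K + 10\delta$ and $\omega\xi$ is a geodesic line from $\omega$. The main obstacle is the key estimate above, and in particular obtaining coefficient $1$ in front of $S(x)$: the crude bound $\dist(\mu(s),\lambda) \le |x\mu(s)| + \dist(x,\lambda)$ would only give $2(\omega|\xi'')_x$, so the two-subcase tripod argument is genuinely needed.
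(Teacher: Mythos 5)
Both parts are correct. Part (1) is essentially the paper's argument, though you should flag the degenerate case $\xi = x$ (possible when $x \in \p X$), which the paper disposes of trivially: in that case $\sigma$ itself is already a geodesic line through $x$ with $\dist(p,\sigma) \le K$, and the rest of your triangle argument needs $\xi \neq x$ to form a nondegenerate $x\omega\xi$. For part (2) your route is genuinely different. You first isolate the clean estimate $\dist(\mu(s),\lambda) \le (\omega|\xi'')_x + c(\delta)$ --- the ray $\mu = x\omega$ stays uniformly within distance roughly $(\omega|\xi'')_x$ of the line $\lambda = \xi''\omega$ --- and then split on whether $\dist(p,\mu) \le K+10\delta$, absorbing the case $\xi = \omega$ into the second branch via the fellow-traveling bound of Lemma \ref{infinite triangle}. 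The paper instead fixes the rough-starlikeness ray $x\xi$ near $p$, forms $x\xi\omega$ together with the auxiliary triangle $x\zeta\omega$ sharing the edge $x\omega$, and splits on which side of the equiradial point of $x\xi\omega$ the nearby point $y \in x\xi$ falls; the possibility $\xi = \omega$ is then handled in a separate final paragraph by dropping the first triangle. The constants agree. Your version is more modular in isolating the reusable fact that $x\omega$ fellow-travels $\xi''\omega$ with error controlled by $(\omega|\xi'')_x$, and it subsumes the $\xi=\omega$ case automatically; the paper's version works entirely within the two tripod maps and never needs the ad hoc threshold $K+10\delta$.
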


\begin{proof}
We first assume that $X$ is $K$-roughly starlike from a point $\omega \in \p X$ and let $x \in X \cup \p X$ be given; we may assume that $x \neq \omega$ as otherwise the claim is trivial. Let $p \in X$ and let $\xi \in \p X$ be such that $\dist(p,\xi\omega) \leq K$ for some choice of  geodesic $\xi\omega$ from $\xi$ to $\omega$. We may assume that $\xi \neq x$, as when $x = \xi$ we immediately obtain the desired estimate $\dist(p,x\omega) \leq K$.  Then let $y \in \xi\omega$ satisfy $|py| \leq K$. We form a geodesic triangle $\Delta = x\xi\omega$ that includes the geodesic $\xi \omega$ containing $y$. Since this triangle is $10\delta$-thin by the discussion in Section \ref{subsec:defn}, we can find $z \in x\xi \cup x\omega$ such that $|yz| \leq 10\delta$, from which it follows that $|pz| \leq K+10\delta$. Regardless of whether $z \in x\xi$ or $z \in x\omega$, we obtain the necessary estimate to conclude that $X$ is $K$-roughly starlike from $x$.

Now assume that $X$ is $K$-roughly starlike from a point $x \in X$ and let $\omega \in \p X$ be given. Since we assumed that $\p X$ has at least two points, by Lemma \ref{two points} we then have $S(x) < \infty$. Let $p \in X$ be given. Let $x\xi$ denote a geodesic ray from $x$ to a point $\xi \in \p X$ such that $\dist(p,x\xi) \leq K$. We will assume for now that $\xi \neq \omega$.  We choose a point $\zeta \in \p X$ such that
\[
(\omega|\zeta)_{x} \leq S(x)+\delta, 
\]
and consider additional geodesics $x\zeta$ and $\omega\zeta$. We consider two  geodesic triangles: a triangle $\Delta_{1} = x\xi\omega$ containing the geodesic ray $x\xi$ that has $\omega$ as a vertex and a triangle $\Delta_{2} = x\zeta\omega$ that shares the edge $x\omega$ with $\Delta_{1}$.  We let $T_{i}: \Delta_{i} \rightarrow \Upsilon$ be tripod maps associated to each of these triangles for $i=1,2$. 

We let $u \in x\xi$ be the equiradial point for $T_{1}$ on this edge of $\Delta_{1}$. Let $y$ be a point on $x\xi$ such that $|py| \leq K$. If $y \in u\xi$ then the fact that $T_{1}$ is $c(\delta)$-roughly isometric implies  that we can find $z \in \omega \xi$ such that $|yz| \leq c(\delta)$. Then $|zp| \leq K + c(\delta)$, so $\dist(p,\omega\xi) \leq K+c(\delta)$. This implies our desired estimate.

On the other hand, if $y \in xu$ then we let $w_{1} \in x\omega$ be the equiradial point for $T_{1}$ on this edge and let $z \in xw_{1}$ be such that $|xy| = |xz|$. Since $T_{1}$ is $c(\delta)$-roughly isometric we then have $|yz| \leq c(\delta)$. We now let $w_{2} \in x\omega$ and $s \in \omega\zeta$ be the equiradial points of $T_{2}$ on these edges. Then by Lemma \ref{interior basepoint} we have
\[
|xw_{2}| \doteq_{c(\delta)} (\omega|\zeta)_{x} \leq S(x)+\delta.
\]
Thus if $z \in xw_{2}$ then it follows that
\begin{align*}
|ps| &\leq |py| + |yz| + |zs| \\
&\leq K + c(\delta) + |zs| \\
&\leq K + |zw_{2}| + c(\delta)\\
&\leq K + |xw_{2}| +c(\delta)\\
&\leq K + S(x) + c(\delta).
\end{align*}
This gives the necessary estimate for $p$ since $s \in \omega\zeta$. If $z \in w_{2}\omega$ instead then since $T_{2}$ is $c(\delta)$-roughly isometric we can find some $t \in \omega\zeta$ such that $|zt| \leq c(\delta)$. Then
\[
|pt| \leq |py| + |yz| + |zt| \leq K + c(\delta).
\]
This gives the desired estimate in this case as well. Combining all of these claims together, we conclude that $X$ is $(K + S(x) + c(\delta))$-roughly starlike from $\omega$. 

In the case $\xi = \omega$ we can skip the part of this argument concerning the triangle $\Delta_{1}$ and just apply all of our arguments to the triangle $\Delta_{2}$ formed by $x\omega$, $x\zeta$, and $\omega\zeta$, where the geodesic $x\omega$ is now chosen such that $\dist(p,x\omega) \leq K$. We obtain from these arguments the same conclusion as above that $\dist(p,\omega\zeta) \leq K+S(x)+c(\delta)$. 
\end{proof}

Proposition \ref{all star} follows immediately from Proposition \ref{rough star boundary}, since for $x \in \p X$ it is an immediate consequence of (1) and for $x \in X$ we can apply (2) first to get some $K_{0} \geq 0$ and $\omega \in \p X$ such that $X$ is $K_{0}$-roughly starlike from $\omega$, and then we can use (1) to conclude that $X$ is $K'$-roughly starlike from all points of $X \cup \p X$ with $K' = K_{0}+10\delta$. 

A simple example shows that the dependence on $S(x)$ cannot be removed in (2) of Proposition \ref{rough star boundary}, as we demonstrate below. 

\begin{ex}\label{dependence}
Let $t > 0$ be given and let $X_{t} = \R \cup_{0 \sim 0} [0,t]$ be the geodesic metric space obtained by identifying $0 \in \R$ with $0 \in [0,t]$. Let $p$ denote the origin in $\R$ considered as a point of $X_{t}$ and let $x$ denote the point $t$ in $[0,t]$ considered as a point of $X_{t}$. Then $X_{t}$ is $0$-hyperbolic and $0$-roughly starlike from $x$. It has two points $\zeta_{i}$, $i=1,2$ in its Gromov boundary corresponding to the half-lines $[0,\infty)$ and $(-\infty,0]$ in $\R$, which are the only geodesic rays in $X_{t}$ starting from $p$. Furthermore the only  geodesic line in $X_{t}$ is the isometrically embedded copy of $\R$ to which we glued the interval $[0,t]$. Thus $X_{t}$ is $t$-roughly starlike from $p$ and from either point of $\p X_{t}$, but is not $K$-roughly starlike for any $K < t$ from these same points. Note in this example that $(\zeta_{1}|\zeta_{2})_{x} = t$, hence $S(x) = t$. Observe that if we let $t \rightarrow \infty$ then the limiting space $X_{\infty}$ is isometric to the tripod $\Upsilon$, which is $0$-roughly starlike from all of its points; the creation of a new point in the Gromov boundary prevents the limiting space from contradicting Proposition \ref{rough star boundary}.
\end{ex}


When $\p X$ consists of a single point there are often simpler direct arguments available when $X$ is roughly starlike from one of its points. In this case $X$ can be thought of as the half-line $[0,\infty)$ up to a bounded error, as is shown below. 

\begin{prop}\label{rough ray}
Let $X$ be a proper geodesic $\delta$-hyperbolic space such that $X$ is $K$-roughly starlike from a point $x \in X$ and $\p X = \{\omega\}$ consists of a single point. Then for any geodesic ray $x\omega$ we have $\dist(p,x\omega) \leq K'$ for all $p \in X$ with $K' = K + 8\delta$. 
\end{prop}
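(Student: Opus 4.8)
The plan is to exploit the fact that $\p X$ being a single point $\omega$ forces \emph{every} geodesic ray emanating from $x$ to represent $\omega$, and then to compare the ray produced by rough starlikeness with the given ray $x\omega$ using Lemma \ref{infinite triangle}.

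First I would fix a geodesic ray $\sigma := x\omega$ (so $\sigma(0) = x$) and let $p \in X$ be arbitrary. By $K$-rough starlikeness from $x$ there is a geodesic ray $\gamma\colon [0,\infty) \to X$ with $\gamma(0) = x$ and $\dist(p,\gamma) \le K$. Since $\p X = \{\omega\}$, the equivalence class $[\gamma]$ must equal $\omega$, so $\gamma$ and $\sigma$ are two geodesic rays that share both the starting point $x$ and the endpoint $\omega \in \p X$. Applying Lemma \ref{infinite triangle} to $\gamma$ and $\sigma$, and using $|\gamma(0)\sigma(0)| = |xx| = 0$, we get $|\gamma(t)\sigma(t)| \le 8\delta$ for all $t \ge 0$.

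Finally, choose $t_{0} \ge 0$ with $|p\gamma(t_{0})| \le K$ (the infimum defining $\dist(p,\gamma)$ is attained because $X$ is proper and the image of $\gamma$ is closed; alternatively one takes $|p\gamma(t_{0})| \le K + \e$ and lets $\e \to 0$ at the end). Then by the triangle inequality
\[
\dist(p, x\omega) \le |p\sigma(t_{0})| \le |p\gamma(t_{0})| + |\gamma(t_{0})\sigma(t_{0})| \le K + 8\delta = K',
\]
and since $p \in X$ was arbitrary this gives the claim.

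There is no real obstacle here; the only point worth stressing is that the ray supplied by rough starlikeness is a priori unrelated to the fixed ray $x\omega$, which is exactly why the hypothesis $\p X = \{\omega\}$ is indispensable — it makes the two rays asymptotic, and with matching initial points Lemma \ref{infinite triangle} then collapses the comparison error all the way down to $8\delta$.
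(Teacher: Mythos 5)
Your proof is correct and takes essentially the same approach as the paper: since $\p X$ is a single point, the ray supplied by rough starlikeness and the given ray $x\omega$ are asymptotic with the same initial point, so Lemma \ref{infinite triangle} gives $|\gamma(t)\sigma(t)| \leq 8\delta$, and the triangle inequality finishes the argument.
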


\begin{proof}
Let $\gamma,\sigma: [0,\infty) \rightarrow X$ be two geodesic rays starting at $x$. By Lemma \ref{infinite triangle} we have $|\gamma(t)\sigma(t)| \leq 8\delta$ for all $t \geq 0$. The proposition follows from this inequality since any point $p \in X$ is within distance $K$ of some geodesic ray starting from $x$ by the $K$-rough starlikeness assumption from $x$. 
\end{proof}

\subsection{Uniformization estimates} We summarize in this section the estimates we will need from our previous work \cite{Bu20} regarding uniformization of Gromov hyperbolic spaces. We let $X$ be a proper $\delta$-hyperbolic geodesic metric space and let $b \in \hat{\mathcal{B}}(X)$ with basepoint $\omega$. We let $\rho_{\e,b}(x) = e^{-\e b(x)}$ be the associated density on $X$ for a given $\e > 0$. We write $d_{\e,b}$ for the metric on the conformal deformation $X_{\e,b}$ of $X$ with conformal factor $\rho_{\e,b}$.  

Since $b$ is $1$-Lipschitz we have the \emph{Harnack type inequality} for $x,y \in X$,
\begin{equation}\label{Harnack}
e^{-\e|xy|} \leq \frac{\rho_{\e,b}(x)}{\rho_{\e,b}(y)} \leq e^{\e|xy|}.
\end{equation}
Integrating this inequality over a curve joining $x$ to $y$ gives the following inequality for $x,y \in X$ \cite[Lemma 4.2]{Bu20}, 
\begin{equation}\label{arc Harnack}
\rho_{\e,b}(x)\e^{-1}(1-e^{-\e |xy|}) \leq d_{\e,b}(x,y) \leq \rho_{\e,b}(x)\e^{-1}(e^{\e |xy|}-1).
\end{equation}

Lemma \ref{lem:estimate both} below sharpens the estimate \eqref{arc Harnack} when $\rho_{\e,b}$ is a GH-density.  We impose the convention that $|xy| = \infty$ if $x \neq y$ and either $x \in \p X$ or $y \in \p X$, and that $|xy| = 0$ if $x = y \in \p X$. We recall that $\p_{\omega}X = \p X$ if $\omega \in X$. 

\begin{lem}\label{lem:estimate both}\cite[Lemma 4.7]{Bu20}
Suppose that $\rho_{\e,b}$ is a GH-density with constant $M$. Then for each $x,y \in X \cup \p_{\omega}X$ we have
\begin{equation}\label{estimate both}
d_{\e,b}(x,y) \asymp_{C} e^{-\e (x|y)_{b}}\min\{1,|xy|\},
\end{equation} 
with $C = C(\delta,\e,M)$. 
\end{lem}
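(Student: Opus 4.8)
The plan is to reduce to the case where $x, y \in X$ are actual points of the space and then to prove the two-sided estimate by a direct argument along a geodesic from $x$ to $y$, using the GH-density hypothesis to control the $\rho_{\e,b}$-length of that geodesic. For the boundary cases, one first establishes the estimate for $x, y \in X$ and then takes limits: given $\xi, \zeta \in \p_\omega X$, pick sequences $\{x_n\} \in \xi$ and $\{y_n\} \in \zeta$ along geodesic rays, apply the interior estimate to the pairs $(x_n, y_n)$, and pass to the limit using \eqref{sequence approximation} to control $(x_n \mid y_n)_b \to (\xi \mid \zeta)_b$ up to an additive $c(\delta)$ error, which becomes a multiplicative $C(\delta,\e)$ error after exponentiating; the factor $\min\{1, |x_n y_n|\}$ tends to $1 = \min\{1, \infty\}$, consistent with the stated convention. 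The mixed case $x \in X$, $\xi \in \p_\omega X$ is handled the same way using \eqref{extended definition}. So the heart of the matter is the interior case.

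For $x, y \in X$, let $\gamma$ be a geodesic from $x$ to $y$ in $X$, of length $\l = |xy|$, parametrized by arclength. The key geometric input is that along $\gamma$ the Busemann function $b$ behaves like a ``tent function'': it decreases at unit rate to a minimum near the point realizing $(x\mid y)_b$ and then increases at unit rate. More precisely, using Proposition~\ref{compute Busemann} (in the form of Proposition~\ref{compute consequence}) when $b \in \mathcal{B}(X)$, or Lemma~\ref{interior basepoint} together with the defining identity $(x\mid y)_b = \tfrac12(b(x)+b(y)-|xy|)$ when $b \in \mathcal{D}(X)$, one gets that there is a point $p_0 \in \gamma$ with $b(p_0) \doteq_{c(\delta)} (x\mid y)_b$ and, writing $s$ for arclength measured from $p_0$, that $b(\gamma(s)) \doteq_{c(\delta)} |s| + (x\mid y)_b$ along $\gamma$. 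Consequently $\rho_{\e,b}(\gamma(s)) = e^{-\e b(\gamma(s))} \asymp_{C(\delta,\e)} e^{-\e(x\mid y)_b} e^{-\e|s|}$.

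Now I would prove the two bounds separately. For the \emph{upper} bound, integrate: the $\rho_{\e,b}$-length of $\gamma$ is
\[
\l_{\rho_{\e,b}}(\gamma) \asymp_{C} e^{-\e(x\mid y)_b} \int_{-s_-}^{s_+} e^{-\e|s|}\,ds \le e^{-\e(x\mid y)_b}\cdot \frac{2}{\e}\bigl(1 - e^{-\e \l/2}\bigr)\cdot\tfrac12\cdot(\text{const}),
\]
and since $1 - e^{-\e \l/2} \asymp_{C(\e)} \min\{1, \l\}$ (comparing the function $1-e^{-u}$ with $\min\{1,u\}$ on $[0,\infty)$, with a harmless adjustment of constants), we get $d_{\e,b}(x,y) \le \l_{\rho_{\e,b}}(\gamma) \lesssim_C e^{-\e(x\mid y)_b}\min\{1,|xy|\}$. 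For the \emph{lower} bound, this is exactly where the GH-density hypothesis enters and is the main obstacle: a priori $d_{\e,b}(x,y)$ is an infimum over \emph{all} curves, not just the geodesic $\gamma$, so a short competitor curve could in principle do better. The GH-inequality \eqref{first GH} says $\l_{\rho_{\e,b}}(\gamma) \le M\, d_{\e,b}(x,y)$, so it suffices to lower-bound $\l_{\rho_{\e,b}}(\gamma)$; but one must combine this with a lower bound on the integral $\int e^{-\e|s|}\,ds$ over the geodesic, which is bounded below by a constant times $\min\{1,\l\}$ by the same elementary comparison. Putting these together, $e^{-\e(x\mid y)_b}\min\{1,|xy|\} \lesssim_C \l_{\rho_{\e,b}}(\gamma) \le M\, d_{\e,b}(x,y)$, which is the desired lower bound. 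Absorbing all the $c(\delta)$-errors into multiplicative constants depending on $\delta$ and $\e$, and the GH-constant $M$, completes the interior case, and then the limiting arguments above finish the proof.
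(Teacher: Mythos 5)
Your proof is essentially correct, but note that this paper does not actually contain a proof of this lemma: it is imported verbatim from \cite[Lemma 4.7]{Bu20}, and the paper explicitly records right after the statement that the upper bound (which needs no GH hypothesis) is \cite[Lemma 4.5]{Bu20} and that the GH-density assumption is used only for the lower bound. So there is no ``paper's own proof'' here to compare against; what you have written is a reasonable reconstruction of the argument from \cite{Bu20}.

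The core of your argument is sound: the tent-function estimate $b(\gamma(s)) \doteq_{c(\delta)} |s| + (x|y)_b$ along a geodesic (Proposition~\ref{compute consequence}(5) for $b\in\mathcal{B}(X)$, the tripod/Gromov-product computation behind Lemma~\ref{interior basepoint} for $b\in\mathcal{D}(X)$) yields $\rho_{\e,b}(\gamma(s)) \asymp_{C(\delta,\e)} e^{-\e(x|y)_b}e^{-\e|s|}$, and then $\int_{-s_-}^{s_+} e^{-\e|s|}\,ds \asymp_{C(\e)} \min\{1,\ell\}$ is exactly what makes $\min\{1,|xy|\}$ appear. Two small things deserve a bit more care than you gave them. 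First, your display for the upper bound of the integral is stated sloppily; the clean estimate is simply $\int_{-s_-}^{s_+} e^{-\e|s|}\,ds \leq \min\{2/\e,\,\ell\}$, together with $\int_{-s_-}^{s_+} e^{-\e|s|}\,ds \geq \tfrac1\e(1-e^{-\e\ell/2})$ using $\max\{s_-,s_+\}\geq\ell/2$, which covers both the degenerate case where the tent minimum sits at an endpoint and the generic case. Second, the limiting step for boundary points implicitly relies on the identification $\bar{X}_{\e,b} \cong X\cup\p_\omega X$ of \cite[Theorem 1.6]{Bu20}: this is what makes $d_{\e,b}(\xi,\zeta)$ meaningful for $\xi,\zeta\in\p_\omega X$ and what guarantees that Gromov sequences $\{x_n\}\in\xi$ converge to $\xi$ in $\bar{X}_{\e,b}$, so that $d_{\e,b}(x_n,y_n)\to d_{\e,b}(\xi,\zeta)$. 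You invoke this silently but it is not free; in the intended order of development it is a prerequisite, not a consequence, of the lemma. With those points made explicit, the argument is complete.
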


This lemma is stated under an additional hypothesis on $X$ in \cite{Bu20} that is always satisfied when $X$ is proper, as is pointed out after \cite[Definition 4.6]{Bu20}. The hypothesis that $\rho_{\e,b}$ is a GH-density is only needed for the lower bound in \eqref{estimate both}. The upper bound always holds for a constant $C = C(\delta,\e)$ by \cite[Lemma 4.5]{Bu20}.

We write $d_{\e,b}(x) = d_{X_{\e,b}}(x)$ for $x \in X$. We then have the following fundamental estimate. 

\begin{lem}\label{compute distance}\cite[Lemma 4.15]{Bu20}
Suppose that $\rho_{\e,b}$ is a GH-density constant $M$ and that $X$ is $K$-roughly starlike from $\omega$. Then for $x \in X$ we have
\begin{equation}\label{compute distance inequality}
d_{\e,b}(x) \asymp_{C} \rho_{\e,b}(x),
\end{equation}
with $C = C(\delta,K,\e,M)$.
\end{lem}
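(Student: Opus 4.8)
The plan is to prove $d_{\e,b}(x)\asymp_{C}\rho_{\e,b}(x)$ by establishing the two one-sided bounds separately. \emph{Lower bound.} This part is soft, using only that $X$ is proper and geodesic, and yields a constant depending on $\e$ alone. Fix $x\in X$ and suppose $y\in X$ satisfies $d_{\e,b}(x,y)<\tfrac12\e^{-1}\rho_{\e,b}(x)$. The left-hand inequality of \eqref{arc Harnack} gives $1-e^{-\e|xy|}\leq\e\,d_{\e,b}(x,y)\,\rho_{\e,b}(x)^{-1}<\tfrac12$, so $|xy|<\e^{-1}\log 2=:r_{0}$. Hence the open $d_{\e,b}$-ball of radius $\tfrac12\e^{-1}\rho_{\e,b}(x)$ about $x$ is contained in $B_{d}(x,r_{0})$, whose closure is compact in $X$ by properness. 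Since $\rho_{\e,b}$ is continuous and positive, the metrics $d$ and $d_{\e,b}$ induce the same topology on $X$, so $\closure{B_{d}(x,r_{0})}$ is compact, hence $d_{\e,b}$-complete; every $d_{\e,b}$-Cauchy sequence contained in it therefore converges in $X$. Consequently no point of $\p X_{\e,b}=\closure{X_{\e,b}}\setminus X_{\e,b}$ lies within $d_{\e,b}$-distance $\tfrac12\e^{-1}\rho_{\e,b}(x)$ of $x$, i.e. $d_{\e,b}(x)\geq\tfrac12\e^{-1}\rho_{\e,b}(x)$.

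\emph{Upper bound.} Use $K$-rough starlikeness from $\omega$ to choose a geodesic ray (if $\omega\in X$) or geodesic line (if $\omega\in\p X$) $\gamma$ issuing from $\omega$ with $\dist(x,\gamma)\leq K$, and let $\xi\in\p_{\omega}X$ be the endpoint of $\gamma$ other than $\omega$. Along $\gamma$ one has $b(\gamma(t))\to\infty$, so the tail of $\gamma$ has finite $\rho_{\e,b}$-length tending to $0$; hence $\{\gamma(n)\}$ is $d_{\e,b}$-Cauchy, and since $\gamma(n)$ eventually leaves every compact subset of $X$ its limit $p$ lies in $\p X_{\e,b}$. Thus $d_{\e,b}(x)\leq d_{\e,b}(x,p)=\lim_{n}d_{\e,b}(x,\gamma(n))$, and feeding each $\gamma(n)\in X$ into the (unconditional) upper half of Lemma~\ref{lem:estimate both} and letting $n\to\infty$ gives $d_{\e,b}(x)\leq C(\delta,\e)\,e^{-\e(x|\xi)_{b}}$, using $|x\gamma(n)|\to\infty$ and $\liminf_{n}(x|\gamma(n))_{b}\doteq_{c(\delta)}(x|\xi)_{b}$. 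It therefore suffices to prove
\begin{equation*}
(x|\xi)_{b}\geq b(x)-c(\delta,K).
\end{equation*}

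For this inequality, note it is unchanged under replacing $b$ by $b+s$, so when $\omega\in X$ we may take $b=b_{\omega}$; picking $y\in\gamma$ with $d(x,y)\leq K$, the $1$-Lipschitz dependence of the Gromov product on each entry gives $(x|\xi)_{\omega}\doteq_{K}(y|\xi)_{\omega}$, and since $y$ lies on the geodesic ray $\omega\xi$ a direct computation gives $(y|\xi)_{\omega}\doteq_{c(\delta)}d(\omega,y)\geq d(\omega,x)-K=b(x)-K$. When $\omega\in\p X$, form the geodesic triangle $\Delta=\omega x\xi$ using $\gamma$ as the side $\omega\xi$ and take a tripod map $T\colon\Delta\to\Upsilon$ associated to $\Delta$; since $x$ is within $K$ of the side $\omega\xi$ and $T$ is $400\delta$-roughly isometric, $T(x)$ lies within $K+400\delta$ of $T(\omega\xi)\subset L_{1}\cup L_{3}$, which forces $0\leq b_{\Upsilon}(T(x))\leq K+400\delta$; then \eqref{tripod image} of Proposition~\ref{compute Busemann} gives $(x|\xi)_{b}\doteq_{c(\delta)}b(x)-b_{\Upsilon}(T(x))\geq b(x)-c(\delta,K)$. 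In either case $e^{-\e(x|\xi)_{b}}\leq e^{\e c(\delta,K)}\rho_{\e,b}(x)$, and combining with the estimate for $d_{\e,b}(x,\xi)$ yields $d_{\e,b}(x)\leq C(\delta,K,\e)\,\rho_{\e,b}(x)$, completing the proof.

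\emph{Main obstacle.} The routine parts are the lower bound and the reduction of the upper bound to the inequality $(x|\xi)_{b}\geq b(x)-c(\delta,K)$; the real content, and the main obstacle, is this inequality — the assertion that a geodesic from $\omega$ passing within $K$ of $x$ captures the Busemann value $b(x)$, up to bounded error, in its Gromov product with $x$ — together with handling it uniformly in the two cases $\omega\in X$ and $\omega\in\p X$, for which the tripod-map computation of Proposition~\ref{compute Busemann} (and, in the interior case, the direct evaluation of a Gromov product along a geodesic ray) is the essential tool. We remark that the argument actually gives $C$ depending only on $\delta$, $K$, and $\e$: the Gehring--Hayman hypothesis on $\rho_{\e,b}$ is not needed here, since only the unconditional upper bound of \eqref{estimate both} and the unconditional inequality \eqref{arc Harnack} are used.
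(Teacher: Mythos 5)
Your proof is correct. Note that the paper does not prove this lemma itself—it cites it as \cite[Lemma 4.15]{Bu20}—so there is no in-paper argument to compare against; but your argument is a valid self-contained proof using only ingredients available in the present paper. The two halves are in order: the lower bound $d_{\e,b}(x)\geq\tfrac12\e^{-1}\rho_{\e,b}(x)$ via \eqref{arc Harnack} plus properness (a $d_{\e,b}$-Cauchy sequence trapped in a $d$-compact ball must converge in $X$), and the upper bound via $K$-rough starlikeness, the unconditional upper half of \eqref{estimate both}, and the Busemann--tripod estimate $(x|\xi)_b\geq b(x)-c(\delta,K)$. One small presentational point: in the limiting step you should take $\limsup$ rather than $\liminf$ of $(x|\gamma(n))_b$ (or, more cleanly, just use the one-sided bound $\liminf_n(x|\gamma(n))_b\geq(x|\xi)_b$ furnished by \eqref{extended definition}), but the inequality you need, $\lim_n d_{\e,b}(x,\gamma(n))\leq Ce^{-\e(x|\xi)_b}$, does hold. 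Your closing observation is also accurate and worth keeping: neither half of the argument invokes the Gehring--Hayman lower bound of \eqref{estimate both}, so the constant is in fact $C(\delta,K,\e)$, with no dependence on $M$; the GH-density hypothesis in the lemma statement is redundant for this particular conclusion (it is of course used elsewhere, e.g.\ in the lower bound of Lemma \ref{lem:estimate both}).
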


Lemmas \ref{lem:estimate both} and \ref{compute distance} are stated for $b \in \mathcal{B}(X)$ in \cite{Bu20}, however as noted in \cite[Remark 4.24]{Bu20} the estimates for $b \in \mathcal{D}(X)$ can be deduced from the estimates for $b \in \mathcal{B}(X)$ by attaching a ray to $X$ at the basepoint of a given $b \in \mathcal{D}(X)$. 

We will need to use the following claim for $b \in \mathcal{B}(X)$ in the proof of Theorem \ref{inversion theorem}. This claim formalizes the intuition that the basepoint $\omega$ of $b$ serves as the ideal point at infinity for the uniformization $X_{\e,b}$. 

\begin{lem}\label{go to infinity}
Suppose that $b \in \mathcal{B}(X)$, let $\omega$ be the basepoint of $b$, and let $z \in X$ be given. Suppose that $X$ is roughly starlike from $\omega$ and that $\rho_{\e,b}$ is a GH-density.  Then for a sequence $\{x_{n}\} \subset X$ we have $d_{\e,b}(z,x_{n}) \rightarrow \infty$ if and only if $(x_{n}|\omega)_{z} \rightarrow \infty$. 
\end{lem}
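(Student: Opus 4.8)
The plan is to relate the $d_{\e,b}$-distance $d_{\e,b}(z,x_n)$ to a Gromov-product quantity using Lemma~\ref{lem:estimate both}, and then to compare the Gromov product based at $b$ with the Gromov product based at $z$. First I would apply Lemma~\ref{lem:estimate both} with $x = z$, $y = x_n$ to get
\[
d_{\e,b}(z,x_n) \asymp_{C} e^{-\e (z|x_n)_{b}} \min\{1,|zx_n|\},
\]
so that, since $|zx_n| \leq 1$ forces $d_{\e,b}(z,x_n)$ to be bounded above (by $C$) and hence cannot tend to infinity, we may restrict attention to sequences with $|zx_n| \geq 1$ for all large $n$; along such a sequence $d_{\e,b}(z,x_n) \asymp_C e^{-\e(z|x_n)_b}$, and therefore $d_{\e,b}(z,x_n) \to \infty$ if and only if $(z|x_n)_b \to -\infty$.

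Next I would unwind the Gromov product based at $b$: by definition $(z|x_n)_b = \tfrac12(b(z) + b(x_n) - |zx_n|)$, and comparing with $(z|x_n)_z = 0$ and $(x_n|\omega)_z = \tfrac12(|zx_n| + b_z(\cdot)\text{-type terms})$, the cleanest route is to use Proposition~\ref{compute Busemann} (or Proposition~\ref{compute consequence}) applied to a geodesic triangle $\Delta = \omega z x_n$, together with $K$-rough starlikeness from $\omega$, to obtain a rough identity of the form
\[
b(x_n) \doteq_{c(\delta)} \operatorname{dist}(z, \omega x_n) - (x_n|\omega)_z + b(z) + (\text{bounded error}),
\]
or more directly: since $b$ is a Busemann function based at $\omega$, the standard fact $(x|y)_b \doteq_{c(\delta)} (x|y)_z - (x|\omega)_z - (y|\omega)_z + \tfrac12 b(z) + \cdots$ relating products based at a boundary point to products based at an interior point (this follows from the tripod-map computations in Proposition~\ref{compute Busemann} and the definitions in Section~\ref{subsec:defn}) specializes, with $x = z$, to
\[
(z|x_n)_b \doteq_{c(\delta)} -(x_n|\omega)_z + \tfrac12 b(z).
\]
Since $b(z)$ is a fixed constant, this gives $(z|x_n)_b \to -\infty$ if and only if $(x_n|\omega)_z \to +\infty$, which combined with the previous paragraph yields the claim. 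I would also double-check the easy direction separately for safety: if $(x_n|\omega)_z \not\to \infty$ then along a subsequence $(x_n|\omega)_z$ is bounded, so $(z|x_n)_b$ is bounded below, so $d_{\e,b}(z,x_n) \asymp_C e^{-\e(z|x_n)_b}$ is bounded above along that subsequence (using the upper bound of Lemma~\ref{lem:estimate both}, which does not even require the GH-density hypothesis), hence $d_{\e,b}(z,x_n) \not\to \infty$.

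The main obstacle I anticipate is pinning down the precise rough identity $(z|x_n)_b \doteq_{c(\delta)} -(x_n|\omega)_z + \tfrac12 b(z)$ with the correct constants: one must be careful that $x_n \in X$ while $\omega \in \p X$, so the triangle $\omega z x_n$ is an ideal triangle and one needs the $10\delta$-thinness of such triangles and the tripod map from Proposition~\ref{compute Busemann}, and one must verify that rough starlikeness from $\omega$ is genuinely used (it guarantees that $z$ lies within bounded distance of a geodesic line starting from $\omega$, which lets one estimate $b$ along $z x_n$). A secondary technical point is handling the case $x = y \in \p X$ conventions in Lemma~\ref{lem:estimate both} and the restriction to $|zx_n| \geq 1$, but that is routine. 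Everything else is bookkeeping with the inequalities \eqref{sequence approximation}, \eqref{Gromov inequality}, and the estimates of Section~\ref{subsec:visual}.
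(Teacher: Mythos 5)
Your proposal is correct and takes a genuinely different route from the paper's proof. The paper argues topologically: it applies Proposition~\ref{all star} and Theorem~\ref{Gehring-Hayman} to build an auxiliary uniformization $X_{\e_0,z}$ whose completion is compact and canonically identified with $X\cup\p X$, shows that $\omega$ is the unique limit point of $\{x_n\}$ in this compactification when $d_{\e,b}(z,x_n)\to\infty$, and for the converse uses properness of $\bar X_{\e,b}$ (which rests on uniformity of $X_{\e,b}$ and hence on rough starlikeness from $\omega$). Your argument instead goes directly through the Gromov-product estimate of Lemma~\ref{lem:estimate both} together with a rough identity of the form $(z|x_n)_b\doteq_{c(\delta)} b(z)-(x_n|\omega)_z$; the latter is indeed available from the paper's toolkit by applying Proposition~\ref{compute consequence}(4) and Lemma~\ref{interior basepoint} to the ideal triangle $\omega z x_n$, which give $b(z)\doteq_{c(\delta)} a_1+(z|x_n)_b$ and $a_1=|z\gamma_1(0)|\doteq_{c(\delta)}(x_n|\omega)_z$ for the parametrization $\gamma_1:(-\infty,a_1]\to X$ of $\omega z$. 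The passage to the claimed equivalence is then sound: both candidate conditions force $|zx_n|\to\infty$, since $(x_n|\omega)_z\le|zx_n|$ on one side and $(z|x_n)_b\ge b(z)-|zx_n|$ (from $1$-Lipschitzness of $b$) on the other, so the $\min\{1,|zx_n|\}$ factor is eventually $1$.

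Two remarks. The additive term in your rough identity should be $b(z)$ rather than $\tfrac12 b(z)$ (one has $b(x)\doteq_{c(\delta)}|xz|-2(x|\omega)_z+b(z)$ and hence $(z|x)_b\doteq_{c(\delta)} b(z)-(x|\omega)_z$), though this is immaterial to the conclusion since $b(z)$ is a fixed constant. More interestingly, contrary to your own worry, rough starlikeness from $\omega$ is \emph{not} used anywhere in your argument: Lemma~\ref{lem:estimate both} needs only the GH-density hypothesis, and the Busemann/Gromov-product identity is pure $\delta$-hyperbolic geometry. Your route therefore establishes the lemma under a weaker hypothesis than the paper's proof uses, and as a bonus yields the quantitative estimate $d_{\e,b}(z,x_n)\asymp_C e^{\e((x_n|\omega)_z-b(z))}$ once $|zx_n|\ge1$, whereas the paper's compactness argument is purely qualitative but shorter if one already has the identification $\bar X_{\e_0,z}\cong X\cup\p X$ in hand.
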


\begin{proof}
By Proposition \ref{all star} and Theorem \ref{Gehring-Hayman} we can find $\e_{0} > 0$ and $K_{0} \geq 0$ such that $X$ is $K_{0}$-roughly starlike from $z$ and such that $\rho_{\e_{0},z}$ is a GH-density with constant $M = 20$. As remarked in \cite[Remark 4.14(b)]{BHK}, the completion $\bar{X}_{\e_{0},z}$ of $X_{\e_{0},z}$ is compact and can be identified with $X \cup \p X$, with a neighborhood basis of a point $\xi \in \p X_{\e_{0},z}$ in the topology on $X \cup \p X$ induced by this identification being given by
\begin{equation}\label{neighborhood}
N_{\omega,\la} = \{x \in X \cup \p X:(x|\omega)_{z} \geq \la\},
\end{equation}
for $\la \geq 0$. 

We first suppose that $d_{\e,b}(z,x_{n}) \rightarrow \infty$. When we consider the sequence $\{x_{n}\}$ in $\bar{X}_{\e_{0},z}$ it must have at least one limit point since $\bar{X}_{\e_{0},z}$ is compact. We claim that $\omega$ is the only possible limit point of $\{x_{n}\}$ in $\bar{X}_{\e_{0},z}$, from which it follows that $x_{n} \rightarrow \omega$ in $\bar{X}_{\e_{0},z}$. The description \eqref{neighborhood} of the neighborhood basis at $\omega$ then implies that $(x_{n}|\omega)_{z} \rightarrow \infty$.

Let $\xi \in \bar{X}_{\e_{0},z}$ be a limit point of $\{x_{n}\}$ in $\bar{X}_{\e_{0},z}$ and let $\{y_{n}\}$ be a subsequence of $\{x_{n}\}$ that converges to $\xi$ in $\bar{X}_{\e_{0},z}$. By the identification $\bar{X}_{\e_{0},z} \cong X \cup \p X$ we can then consider $\xi$ as a point of $X \cup \p X$. If $\xi \in X$ then the sequence $\{y_{n}\}$ also converges to $\xi$ in $X$ by \eqref{arc Harnack}, hence it also converges to $\xi$ in $X_{\e,b}$ by this same inequality for $b$. This contradicts the assumption that $d_{\e,b}(z,y_{n}) \rightarrow \infty$. We thus must have $\xi \in \p X_{\e_{0},z} \cong \p X$. The description \eqref{neighborhood} of the neighborhood basis shows that we must then have $(y_{n}|\xi)_{z} \rightarrow \infty$ as $n \rightarrow \infty$. If $\xi \neq \omega$ then this implies that $(y_{n}|\xi)_{b} \rightarrow \infty$ by \cite[Proposition 3.4.1]{BS07}. Lemma \ref{lem:estimate both} then shows that $d_{\e,b}(y_{n},\xi) \rightarrow 0$. This once again contradicts the fact that $d_{\e,b}(z,y_{n}) \rightarrow \infty$. Thus the only possibility is $\xi = \omega$. 

Now suppose that  $(x_{n}|\omega)_{z} \rightarrow \infty$. The description \eqref{neighborhood} of the neighborhood basis at $\omega$ then implies that $x_{n} \rightarrow \omega$ in $\bar{X}_{\e_{0},z}$. Since $X_{\e,b}$ is a uniform metric space, its completion $\bar{X}_{\e,b}$ is proper by \cite[Proposition 2.20]{BHK}. Thus if the sequence of distances $\{d_{\e,b}(z,x_{n})\}$ were bounded then we could find a point $\xi \in \bar{X}_{\e,b}$ and a subsequence $\{y_{n}\}$ such that $y_{n} \rightarrow \xi$ in $\bar{X}_{\e,b}$. 

If we had $\xi \in X_{\e,b}$ then it would follow that $y_{n} \rightarrow \xi$ in $X_{\e_{0},z}$ since the metrics on $X_{\e,b}$ and $X_{\e_{0},z}$ are locally biLipschitz. But this contradicts the fact that $y_{n} \rightarrow \omega$ in $\bar{X}_{\e_{0},z}$. Hence the only possibility is $\xi \in \p X_{\e,b}$. The identification $\p X_{\e,b} \cong \p_{\omega}X$ then allows us to consider $\xi$ as a point of the Gromov boundary relative to $\omega$. The comparison \eqref{estimate both} implies that $(y_{n}|\xi)_{b} \rightarrow \infty$ since $d_{\e,b}(y_{n},\xi) \rightarrow 0$. By \cite[Proposition 3.4.1]{BS07} this implies that $(y_{n}|\xi)_{z} \rightarrow \infty$. But this contradicts the fact that  $(y_{n}|\omega)_{z} \rightarrow \infty$ since $\xi \neq \omega$. 
\end{proof}

\section{Quasihyperbolization}\label{sec:quasihyperbolize}

In this section we will study the quasihyperbolizations of unbounded uniform metric spaces. Our focus will be on generalizing the results in \cite[Chapter 3]{BHK} to the unbounded setting. Previously the unbounded setting has been treated using the method of sphericalization \cite{HSX08} to reduce claims to the bounded setting.

We let $(\Omega,d)$ be an $A$-uniform metric space (either bounded or unbounded for now) and write $Y = (\Omega,k)$ for the quasihyperbolization of $\Omega$, defined using the quasihyperbolic metric \eqref{quasihyperbolic metric} on $\Omega$. Then $Y$ is a proper geodesic $\delta$-hyperbolic space by \cite[Theorem 3.6]{BHK} with $\delta = \delta(A)$ depending only on $A$. For clarity we will denote distances between points $x,y \in Y$ by $k(x,y)$ as opposed to the standard distance notation $|xy|$ in $\delta$-hyperbolic spaces. We will refer to geodesics in $Y$ as \emph{quasihyperbolic geodesics} in $\Omega$. We denote the distance to the metric boundary of $\Omega$ by $d(x):=d_{\Omega}(x)$ for $x \in \Omega$. By \cite[Proposition 2.20]{BHK}  the completion $\bar{\Omega}$ of $\Omega$ with respect to the metric $d$ is proper. 

By \cite[(2.4)]{BHK} and \cite[(2.16)]{BHK} the quasihyperbolic metric $k$ satisfies the inequality for $x,y \in \Omega$, 
\begin{equation}\label{hyperbolize comparison}
\log\left(1+ \frac{d(x,y)}{\min\{d(x),d(y)\}}\right) \leq k(x,y) \leq 4A^{2} \log\left(1+ \frac{d(x,y)}{\min\{d(x),d(y)\}}\right).  
\end{equation}
We observe the easily verified inequality for any incomplete metric space $(\Omega,d)$ and $x,y \in \Omega$, 
\begin{equation}\label{lip boundary distance}
|d(x)-d(y)| \leq d(x,y),
\end{equation}
which leads to the following inequality for the quasihyperbolic metric for $x,y \in \Omega$ \cite[(2.3)]{BHK},
\begin{equation}\label{BHK 2.3}
\left|\log \frac{d(x)}{d(y)}\right| \leq k(x,y).
\end{equation}

For a rectifiable curve $\gamma$ we let $\l_{d}(\gamma)$ denote its length measured in the metric $d$ and we let $\l_{k}(\gamma)$ denote its length  measured in the quasihyperbolic metric $k$. We note by \cite[(2.15)]{BHK} that if $\gamma:I \rightarrow \Omega$ is an $A$-uniform curve then 
\begin{equation}\label{BHK 2.15}
\l_{k}(\gamma) \leq 4A\log \left(1+\frac{\l_{d}(\gamma)}{\min\{d(x), d(y)\}}\right),
\end{equation}
By \cite[Theorem 2.10]{BHK}, there is a constant $A_{*} = A_{*}(A) \geq 1$ such that quasihyperbolic geodesics $\gamma: I \rightarrow Y$ defined on compact intervals $I \subset \R$ are $A_{*}$-uniform curves in $\Omega$; an explicit bound $A_{*} \leq e^{1000A^{6}}$ is given in \cite{BHK}. The following straightforward lemma allows us to conclude that all quasihyperbolic geodesics are $A_{*}$-uniform curves in $\Omega$. 

\begin{lem}\label{subcurve uniform}
Let $(\Omega,d)$ be an incomplete metric space and let $A \geq 1$ be given. Let $\gamma:I \rightarrow \Omega$ be a curve such that for each compact subinterval $J \subset I$ the restriction $\gamma|_{J}$ is an $A$-uniform curve. Then $\gamma$ is an $A$-uniform curve. 
\end{lem}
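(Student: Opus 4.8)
The statement to prove is Lemma~\ref{subcurve uniform}: a curve all of whose compact subcurves are $A$-uniform is itself $A$-uniform. The plan is to verify the two defining conditions of an $A$-uniform curve (in the extended sense covering non-rectifiable curves) directly, exhausting the parameter interval $I$ by compact subintervals. Write the endpoints of $I$ as $t_{-} \in [-\infty,\infty)$ and $t_{+} \in (-\infty,\infty]$. The key observation is that both defining conditions are ``local up to taking limits'': condition \eqref{uniform two} is evaluated at a single parameter $t \in I$, while condition \eqref{uniform one} (or its non-rectifiable replacement) is a statement about the behavior as one approaches the two endpoints.

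\emph{First} I would handle condition \eqref{uniform two}. Fix $t \in I$. The quantity $\min\{\l(\gamma|_{I_{\leq t}}), \l(\gamma|_{I_{\geq t}})\}$ is the supremum, over compact subintervals $J = [s_{1},s_{2}] \subset I$ with $s_{1} \leq t \leq s_{2}$, of $\min\{\l(\gamma|_{[s_{1},t]}), \l(\gamma|_{[t,s_{2}]})\}$. For each such $J$, since $\gamma|_{J}$ is $A$-uniform and $t \in J$, inequality \eqref{uniform two} applied to $\gamma|_J$ at $t$ gives $\min\{\l(\gamma|_{[s_{1},t]}), \l(\gamma|_{[t,s_{2}]})\} \leq A\, d_{\Omega}(\gamma(t))$ --- note the distance $d_{\Omega}(\gamma(t))$ does not depend on $J$. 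Taking the supremum over $J$ preserves the inequality, so \eqref{uniform two} holds for $\gamma$. (A minor point: this already shows each compact subcurve is rectifiable, hence $\gamma$ is locally rectifiable, consistent with the remark after Definition~\ref{def:uniform}.)

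\emph{Second} I would handle condition \eqref{uniform one} and its non-rectifiable analogue. Two cases arise. If $\gamma$ is rectifiable, then $d(\gamma_-,\gamma_+) < \infty$ and $\gamma_\pm \in \bar\Omega$ are genuine points; choosing an increasing sequence of compact subintervals $J_n = [s_n, t_n]$ with $s_n \to t_-$ and $t_n \to t_+$, applying \eqref{uniform one} to each $\gamma|_{J_n}$ gives $\l(\gamma|_{J_n}) \leq A\, d(\gamma(s_n), \gamma(t_n))$, and letting $n \to \infty$, the left side converges to $\l(\gamma)$ (by definition of length as a supremum, and finiteness) while the right side converges to $A\,d(\gamma_-,\gamma_+)$ by continuity of $\gamma$ at the endpoints in $\bar\Omega$ (which holds since $\l(\gamma)<\infty$) and continuity of $d$ on $\bar\Omega$. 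Thus \eqref{uniform one} holds. If instead $\gamma$ is not rectifiable, then $\l(\gamma) = \infty$, so $\sup_n \l(\gamma|_{J_n}) = \infty$; since $\l(\gamma|_{J_n}) \leq A\, d(\gamma(s_n),\gamma(t_n))$ for each $n$ by \eqref{uniform one} applied to the compact subcurves, we get $d(\gamma(s_n), \gamma(t_n)) \to \infty$, which is exactly the replacement condition for \eqref{uniform one} in the non-rectifiable case. (One should be slightly careful that this forces $d(\gamma(s),\gamma(t)) \to \infty$ for \emph{all} ways of sending $s \to t_-$, $t \to t_+$, not just along the chosen sequence; but since $\l(\gamma|_{[s,t]})$ is monotone in the interval and $\to \infty$, and dominates $d(\gamma(s),\gamma(t))^{-1}\cdot$... actually the bound $d(\gamma(s),\gamma(t)) \geq A^{-1}\l(\gamma|_{[s,t]})$ and monotonicity of $\l(\gamma|_{[s,t]})$ in $[s,t]$ gives the full claim directly.)

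\emph{The main obstacle} --- really the only thing requiring care --- is bookkeeping around the non-rectifiable case and the passage to the limit at $t_\pm$: making sure the exhaustion argument is phrased so that the conclusion holds for every approach to the endpoints (handled via monotonicity of length on nested intervals), and checking that the extended definition of $A$-uniform curve is being applied with the correct reading of \eqref{uniform one} in each case. The parametrization-independence noted after Definition~\ref{def:uniform} means we need not worry about reparametrizing. No hyperbolicity, quasihyperbolic, or uniformization machinery is needed; this is a pure ``compatness exhaustion'' lemma about the definition of uniform curves.
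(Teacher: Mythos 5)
Your proof is correct and takes essentially the same approach as the paper's: exhaustion of $I$ by compact subintervals and passage to the limit. The one minor simplification is in your treatment of \eqref{uniform two}: you observe once and for all that the minimum of two monotone limits equals the limit of the minima, which lets you handle the rectifiable and non-rectifiable cases uniformly, whereas the paper's argument in the non-rectifiable case spells out a small WLOG case analysis (deciding which side of the curve has infinite length and bounding the other side). This is a streamlining of bookkeeping rather than a different argument.
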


\begin{proof}
Let $J_{n} = [t_{n}^{-},t_{n}^{+}]$, $n \in \N$, be a sequence of compact subintervals $J_{n} \subset I$ with $J_{n} \subset J_{n+1}$ for each $n$ and $I = \bigcup_{n = 1}^{\infty} J_{n}$. We first suppose that $\gamma$ is rectifiable, i.e., that $\l_{d}(\gamma) < \infty$. The $A$-uniformity of $\gamma|_{J_{n}}$ implies for each $n$ that
\begin{equation}\label{J curve one}
\l_{d}(\gamma|_{J_{n}}) \leq Ad(\gamma(t_{n}^{-}),\gamma(t_{n}^{+})).
\end{equation}
Letting $n \rightarrow \infty$ in this inequality gives \eqref{uniform one} for $\gamma$. Similarly for each $t \in I$ we will have $t \in J_{n}$ for sufficiently large $n$, and inequality \eqref{uniform two} then follows from letting $n \rightarrow \infty$ in the corresponding inequality for $\gamma|_{J_{n}}$. 

Now suppose that $\l_{d}(\gamma) = \infty$ and write $t_{-} \in [-\infty,\infty)$, $t_{+} \in (-\infty,\infty]$ for the endpoints of $I$. The inequality \eqref{J curve one} applied to compact subintervals $J(s,t) \subset I$ with given endpoints $s < t \in I$ then shows that $d(\gamma(s),\gamma(t)) \rightarrow \infty$ as  $s \rightarrow t_{-}$ and $t \rightarrow t_{+}$. For a given $t \in I$ and $n$ large enough that $t \in J_{n}$, the $A$-uniformity of $\gamma|_{J_{n}}$ implies that
\begin{equation}\label{J curve two}
\min\{\l_{d}(\gamma|_{(J_{n})_{\leq t}}), \l_{d}(\gamma|_{(J_{n})_{\geq t}})\}\leq Ad(\gamma(t)). 
\end{equation}
Since $\l_{d}(\gamma) = \infty$, we must have either $\l_{d}(\gamma|_{(J_{n})_{\leq t}}) \rightarrow \infty$ or $\l_{d}(\gamma|_{(J_{n})_{\geq t}}) \rightarrow \infty$ as $n \rightarrow \infty$; by reversing the orientation of $\gamma$ if necessary we can assume without loss of generality that $\l_{d}(\gamma|_{(J_{n})_{\leq t}}) \rightarrow \infty$, which implies that $\l_{d}(\gamma|_{I_{\leq t}}) = \infty$. By taking $n$ large enough that $\l_{d}(\gamma|_{(J_{n})_{\leq t}}) > Ad(\gamma(t))$ we can then conclude from inequality \eqref{J curve two} that
\[
\l_{d}(\gamma|_{(J_{n})_{\geq t}}) \leq Ad(\gamma(t)),
\]
for sufficiently large $n$. By letting $n \rightarrow \infty$ we conclude that $\gamma|_{I_{\geq t}}$ is rectifiable and 
\begin{equation}\label{I curve}
\min\{\l_{d}(\gamma|_{I_{\leq t}}), \l_{d}(\gamma|_{I_{\geq t}})\} = \l_{d}(\gamma|_{I_{\geq t}}) \leq Ad(\gamma(t)).
\end{equation}
Thus $\gamma$ is $A$-uniform. 
\end{proof}

Since any subcurve of a quasihyperbolic geodesic is also a quasihyperbolic geodesic, we conclude from Lemma \ref{subcurve uniform} that quasihyperbolic geodesics are $A_{*}$-uniform curves in $\Omega$. We will use this fact frequently throughout this and subsequent sections, always denoting the corresponding constant by $A_{*}$ to distinguish it from the other constants since this constant plays a special role. 

We now discuss the relationship of the metric boundary $\p \Omega$ of $\Omega$ to the Gromov boundary $\p Y$ of $Y$, as elucidated by Bonk-Heinonen-Koskela in \cite[Proposition 3.12]{BHK}. By \cite[Proposition 3.12]{BHK} any quasihyperbolic geodesic $\gamma:I \rightarrow \Omega$ can be continuously extended to the closure $\bar{I}$ of $I$ and then reparametrized by arclength with respect to $d$ in order to obtain a curve $\sigma: [0,\l_{d}(\gamma)] \rightarrow \bar{\Omega}$ that restricts to an $A_{*}$-uniform curve $\sigma:(0,\l_{d}(\gamma)) \rightarrow \Omega$. Here we allow $\l_{d}(\gamma) = \infty$, in which case the domain of $\sigma$ is taken to be $[0,\infty)$.

In particular any quasihyperbolic geodesic ray $\gamma:[0,\infty) \rightarrow \Omega$ can be reparametrized to give an $A_{*}$-uniform curve $\sigma:[0,\l_{d}(\gamma)) \rightarrow \Omega$ with the same starting point as $\gamma$. All quasihyperbolic geodesic rays $\gamma$ with $\l_{d}(\gamma) = \infty$ are at bounded distance from each other by \cite[Proposition 3.12(a)]{BHK}, hence they define a common point $\omega \in \p Y$ if any such ray exists.   When $\l_{d}(\gamma) < \infty$ the curve $\sigma$ has a well-defined endpoint in $\p \Omega$, and two quasihyperbolic geodesic rays have the same endpoint in $\p \Omega$ if and only if they are at bounded distance from each other by \cite[Proposition 3.12(b)]{BHK}. When $\Omega$ is bounded the case $\l_{d}(\gamma) = \infty$ cannot occur (since all uniform curves have finite length with respect to $d$ in this case) and the map $\p Y \rightarrow \p \Omega$ given by sending a quasihyperbolic geodesic ray to its endpoint in $\p \Omega$ defines a bijection between $\p Y$ and $\p \Omega$ \cite[Proposition 3.12(d)]{BHK}. When $\Omega$ is unbounded quasihyperbolic geodesic rays $\gamma$ with $\l_{d}(\gamma) = \infty$ always exist by \cite[Proposition 3.12(c)]{BHK}, and in this case \cite[Proposition 3.12(d)]{BHK} instead provides a bijection between $\p_{\omega}Y = \p Y \backslash \{\omega\}$ and $\p \Omega$. In this case we remark that a quasihyperbolic geodesic $\gamma$ satisfies $\l_{d}(\gamma) = \infty$  if and only if it has $\omega$ as an endpoint. A quasihyperbolic geodesic $\gamma$ having $\omega$ as an endpoint can always be parametrized as to have the form $\gamma:(-\infty,a] \rightarrow Y$, $a \in (-\infty,\infty]$, with $\gamma(t) \rightarrow \omega$ as $t  \rightarrow -\infty$; in this case the $A_{*}$-uniformity inequality \eqref{uniform two} implies that for all $t \in (-\infty,a]$, 
\begin{equation}\label{simplified uniform}
\l_{d}(\gamma|_{[t,a]}) \leq A_{*} d(\gamma(t)),
\end{equation}
since $\l_{d}(\gamma|_{(-\infty,t]}) = \infty$. 

The fact that any quasihyperbolic geodesic joining two points of $\bar{\Omega}$ (under the identification of $\bar{\Omega}$ with $Y \cup \p Y$ in the bounded case and $Y \cup \p_{\omega}Y$ in the unbounded case) must have finite length in $\Omega$ leads to the following lemma. 

\begin{lem}\label{other height}
Let $x_{1},x_{2} \in \bar{\Omega}$ and let $x_{1}x_{2}$ be a quasihyperbolic geodesic between these points. Then there is a point $p \in x_{1}x_{2}$ such that $d(p) = \sup_{x \in x_{1}x_{2}} d(x)$. Furthermore there is a constant $C = C(A) \geq 1$ such that
\begin{equation}\label{second height}
d(x_{1},x_{2}) \asymp_{C}  d(p). 
\end{equation}
\end{lem}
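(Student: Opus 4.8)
The plan is to prove the two assertions in turn, using the $A_*$-uniformity of quasihyperbolic geodesics and the estimates \eqref{hyperbolize comparison} and \eqref{lip boundary distance}.

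First I would establish the existence of the maximum. Parametrize $x_1x_2$ by arclength with respect to $d$ as a curve $\sigma: [0,\l] \to \bar\Omega$ (with $\l = \l_d(x_1x_2) \in (0,\infty]$, finite by the discussion preceding the lemma that quasihyperbolic geodesics joining points of $\bar\Omega$ have finite $d$-length), so that $\sigma$ restricts to an $A_*$-uniform curve on the interior. The function $t \mapsto d(\sigma(t))$ is continuous (by \eqref{lip boundary distance} it is $1$-Lipschitz in the arclength parameter) and vanishes at the endpoints $t \in \{0, \l\}$ if those endpoints lie on $\p\Omega$. If both endpoints lie in $\p\Omega$, then $d\circ\sigma$ is a nonnegative continuous function on a compact interval vanishing at both ends, hence attains its supremum at some interior point $p$, and the supremum is positive since the interior of $\sigma$ lies in $\Omega$. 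If one endpoint, say $x_1$, lies in $\Omega$, one still has compactness of $[0,\l]$ and continuity of $d\circ\sigma$ (with the value at $x_1$ being $d(x_1) > 0$), so the supremum is again attained. This gives the point $p$.

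Next I would prove \eqref{second height}. For the lower bound $d(x_1,x_2) \lesssim_C d(p)$: by the triangle inequality it suffices to bound $d(x_1, p)$ and $d(p, x_2)$ by a constant multiple of $d(p)$. Consider the subcurve of $\sigma$ from $x_1$ to $p$; its $d$-length dominates $d(x_1,p)$. Apply the $A_*$-uniformity inequality \eqref{uniform two} at the point $p$ (or rather, use that $d(q) \le d(p)$ for all $q$ on the curve together with the standard fact that a curve of length $\ell$ all of whose points have boundary distance at least $c$ has... ) — more directly: since $d$ is $1$-Lipschitz along the arclength parametrization and $d(\sigma(t)) \le d(p)$, a cleaner route is to use \eqref{BHK 2.3}: for any $q \in x_1 x_2$ we have $\log(d(p)/d(q)) \le k(p,q)$, but this goes the wrong way. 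Instead I would argue: let $q$ be an endpoint of the subcurve from $p$ to $x_1$ near $x_1$; the $A_*$-uniformity applied to the subcurve $\sigma|_{[0,t_p]}$ (where $\sigma(t_p) = p$), using \eqref{uniform two} at $p$, gives $\min\{\l_d(\sigma|_{[0,t_p]}), \l_d(\sigma|_{[t_p,\l]})\} \le A_* d(p)$, and since $\sigma$ is parametrized by $d$-arclength this reads $\min\{t_p, \l - t_p\} \le A_* d(p)$. This bounds the shorter of the two pieces. For the longer piece, I would instead compare to $d(x_1, x_2)$ directly via \eqref{uniform one}: $\l = \l_d(\sigma) \le A_* d(x_1, x_2)$ (here is where we need that $x_1, x_2$ are joined by an honest uniform curve, or that the quasihyperbolic geodesic is $A_*$-uniform and \eqref{uniform one} applies since $\l_d < \infty$). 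Hence $d(x_1, p), d(p, x_2) \le \l \le A_* d(x_1,x_2)$, giving one direction trivially, and the content is the \emph{other} direction.

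So the real work is the bound $d(p) \lesssim_C d(x_1, x_2)$, i.e. the maximal boundary distance along the geodesic is controlled by the distance between the endpoints. Here I would use \eqref{hyperbolize comparison}: we have $k(x_1, x_2) \le 4A^2 \log(1 + d(x_1,x_2)/\min\{d(x_1), d(x_2)\})$, but for endpoints on $\p\Omega$ this is vacuous ($k(x_1,x_2) = \infty$), so I need a different mechanism. The right tool is \eqref{uniform one} / \eqref{uniform two} combined with the fact that $p$ realizes the max: by \eqref{uniform two} at $p$, one of the two subarcs from $p$ to an endpoint has $d$-length at most $A_* d(p)$; call its other endpoint $x_i$. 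Then $d(p) \le$ (that $d$-length)$\, + \dots$ — actually $d(p) \le \l_d(\sigma|_{[t_p, \text{end}]})$? No: I want a \emph{lower} bound on the subarc length in terms of $d(p)$. Since $d(\sigma(t)) \to 0$ at a $\p\Omega$-endpoint and $d(\sigma(t_p)) = d(p)$, and $d\circ\sigma$ is $1$-Lipschitz in arclength, the subarc from $p$ to that endpoint has $d$-length $\ge d(p)$. Combined with $\min$-of-two-subarcs $\le A_* d(p)$ from \eqref{uniform two}, and with $\l_d(\sigma) \le A_* d(x_1,x_2)$ from \eqref{uniform one} when both endpoints are boundary points (this is the non-rectifiable-turned-rectifiable case, where \eqref{uniform one} holds), I get $d(p) \le \l_d(\text{shorter subarc}) \le \dots$. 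Let me organize: the shorter subarc has length $\le A_* d(p)$ and $\ge d(p)$ (if its far endpoint is on $\p\Omega$) — consistent but not yet a bound on $d(p)$ by $d(x_1,x_2)$. The bound must come from: $d(p) \le \l_d(\sigma|_{\text{either half}})$, and \emph{both} halves together have length $\l_d(\sigma) \le A_* d(x_1, x_2)$, hence $d(p) \le A_* d(x_1,x_2)$. That closes it when both endpoints are on $\p\Omega$; when an endpoint $x_i \in \Omega$ one instead has $d(p) \le d(x_i) + \l_d(\text{subarc to } x_i) \le d(x_i) + A_* d(x_1,x_2)$, and $d(x_i) \le d(x_1,x_2) + d(x_j)$ issues — I would instead note $d(x_i) \le \text{(trivial)}$ or simply handle it via $d(x_i) = \dist(x_i, \p\Omega) \le \l_d(\sigma|_{\text{to the other end}}) \le A_* d(x_1,x_2)$ if the other end is a boundary point, or $d(p) \le d(x_i) + $ length, with length $\le A_* d(x_1,x_2)$ and $d(x_i)$ itself needing no bound in the degenerate all-interior case where one can use \eqref{BHK 2.3} and \eqref{hyperbolize comparison} directly. \textbf{The main obstacle} is this case analysis at the endpoints and pinning down exactly which uniformity inequality (\eqref{uniform one} in its length-$\infty$ extended form versus \eqref{uniform two}) delivers $d(p) \le A_* d(x_1,x_2)$ cleanly in all cases; I expect the cleanest uniform treatment is: $2 d(p) \le d(p) + d(p) \le \l_d(\sigma|_{[0,t_p]} \text{ extended to endpoint}) + \l_d(\sigma|_{[t_p,\l]} \text{ extended}) = \l_d(\sigma) \le A_* d(x_1,x_2)$, using that from $p$ to each endpoint (whether in $\Omega$ with $d \ge 0$ there, or on $\p\Omega$ with $d = 0$) the $1$-Lipschitz decay of $d\circ\sigma$ forces the subarc length to be at least $d(p) - d(\text{endpoint})$, and $d(\text{endpoint}) \le d(x_1,x_2)$ trivially by... no — so in fact the final clean statement uses $\l_d(\sigma|_{[0,t_p]}) \ge d(p) - d(x_1)$ and $\l_d(\sigma|_{[t_p,\l]}) \ge d(p) - d(x_2)$, summing to $\l_d(\sigma) \ge 2d(p) - d(x_1) - d(x_2)$, and with $\l_d(\sigma) \le A_*d(x_1,x_2)$ and $d(x_i) \le d(x_1,x_2) + \min\{d(x_1),d(x_2)\}$... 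I will sort out the constant bookkeeping, but the structure is: decay of $d\circ\sigma$ from its max down to the endpoint values, plus $\l_d(\sigma) \lesssim d(x_1,x_2)$ from uniformity, yields $d(p) \lesssim d(x_1,x_2)$.
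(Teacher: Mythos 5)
The existence part is fine and matches the paper: finite $d$-length, a $d$-arclength parametrization over a compact interval, continuity and compactness give the maximizing point $p$.

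For the comparison \eqref{second height}, you have the right raw materials (the $1$-Lipschitz decay of $d\circ\sigma$, the uniformity inequalities \eqref{uniform one} and \eqref{uniform two}), but neither direction actually closes in your proposal, and the one idea you are missing is to evaluate \eqref{uniform two} at the \emph{$d$-arclength midpoint} $\sigma(a/2)$ of the geodesic (where $a = \l_d(\sigma)$) rather than at $p$. At the midpoint the two subarcs have equal length $a/2$, so the minimum in \eqref{uniform two} is unambiguously $a/2$, giving $a/2 \le A_* d(\sigma(a/2)) \le A_* d(p)$ and hence $\l_d(\sigma) \le 2A_* d(p)$. That is precisely the direction you label "trivial," but your argument for it does not close: applying \eqref{uniform two} at $p$ only controls the shorter subarc, and the follow-up bound $d(x_1,p), d(p,x_2) \le A_* d(x_1,x_2)$ that you write down provides no lower bound on $d(p)$, so the paragraph ends without establishing $d(x_1,x_2) \lesssim d(p)$.

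With the midpoint inequality in hand both directions come out together, which is how the paper runs it: reverse orientation so $p \in \sigma([0,a/2])$; then the $1$-Lipschitz decay gives $d(p) \le \l_d(\sigma|_{[0,a/2]}) = a/2$ (this step uses $d(\sigma(0)) = 0$, i.e.\ the relevant endpoint lies in $\p\Omega$); combined with $a/2 \le A_* d(p)$ this pins $d(p) \asymp_{2A_*} \l_d(\sigma)$, and \eqref{uniform one} then finishes via $\l_d(\sigma) \asymp_{A_*} d(x_1,x_2)$. Your estimate $\l_d(\sigma) \ge 2d(p) - d(x_1) - d(x_2)$ is the same decay idea in slightly different form, and your unease about the terms $d(x_i)$ when $x_i \in \Omega$ is well-founded — the paper's inequality $d(p) \le \l_d(\sigma|_{[0,a/2]})$ silently relies on the endpoint being a boundary point — so you have correctly located where the argument is tight, but that wrinkle is shared with the paper's own proof. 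The concrete gap in the proposal is not the endpoint bookkeeping; it is that you never invoke the uniformity condition at the midpoint, and without that the comparison $\l_d(\sigma) \lesssim d(p)$ (and hence $d(x_1,x_2) \lesssim d(p)$) does not appear.
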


\begin{proof}
We must have $\l_{d}(x_{1}x_{2}) < \infty$ since $x_{1}$ and $x_{2}$ are both points of $\bar{\Omega}$. Set $a =\l_{d}(x_{1}x_{2})$ and let $\sigma: [0,a] \rightarrow \bar{\Omega}$ be a $d$-arclength parametrization of the closure of $x_{1}x_{2}$ in $\bar{\Omega}$. Then by the compactness of $[0,a]$ and the continuity of the function $t \rightarrow d(\sigma(t))$ we can find $s \in [0,a]$ such that $d(\sigma(s)) = \sup_{t \in [0,a]}d(\sigma(t))$; we then set $p = \sigma(s)$. The point $p$ then satisfies $d(p) = \sup_{x \in x_{1}x_{2}} d(x)$.

By reversing the orientation of $\sigma$ if necessary we can assume without loss of generality that $p \in \sigma([0,\frac{a}{2}])$. Since $\sigma$ is an $A_{*}$-uniform curve we then have 
\[
d(p) \leq \l_{d}(\sigma|_{[0,\frac{a}{2}]})  \leq A_{*}d\left(\sigma\left(\frac{a}{2}\right)\right) \leq A_{*}d(p).
\]
Thus 
\[
\l_{d}(\sigma) = 2\l_{d}(\sigma|_{[0,\frac{a}{2}]}) \asymp_{A_{*}} d(p).
\]
Since $\l_{d}(\sigma) \asymp_{A_{*}} d(x_{1},x_{2})$ by the $A_{*}$-uniformity of $\sigma$, the comparison \eqref{second height} follows. 
\end{proof}

The next proposition establishes rough starlikeness of $Y$ from any point of $Y \cup \p Y$.   When $\Omega$ is unbounded we obtain quantitative control of the rough starlikeness constant from any point in terms of $A$, while when $\Omega$ is bounded we only have control in terms of $A$ for a specific point in $Y$. For other points we need to use a bound on the ratio 
\begin{equation}\label{tightness}
\phi(\Omega):= \frac{\diam \, \Omega}{\diam \, \p \Omega}.
\end{equation}

\begin{prop}\label{uniform starlike}
Let $\Omega$ be an $A$-uniform metric space and let $Y = (\Omega,k)$ be its quasihyperbolization. 
\begin{enumerate}
\item If $\Omega$ is unbounded then there is a constant $K = K(A)$ such that $Y$ is $K$-roughly starlike from any point of $Y \cup \p Y$. 
\item If $\Omega$ is bounded there is a constant $K = K(A)$ such that $Y$ is $K$-roughly starlike from any point $z \in \Omega$ such that $d(z) = \sup_{x \in \Omega}d(x)$. If $\p \Omega$ contains at least two points then there is a constant $K' = K'(A,\phi(\Omega))$ such that $Y$ is $K'$-roughly starlike from any point of $Y \cup \p Y$. 
\end{enumerate} 
\end{prop}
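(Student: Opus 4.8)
\textbf{Plan of proof for Proposition \ref{uniform starlike}.}

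The plan is to handle the two regimes ---unbounded and bounded--- somewhat separately, using the reparametrization theory of quasihyperbolic geodesics from \cite[Proposition 3.12]{BHK} recalled above, together with Lemma \ref{other height} and the established $A_{*}$-uniformity of quasihyperbolic geodesics. In all cases the strategy is: given $x \in \Omega$, we must produce a quasihyperbolic geodesic ray (or line, for boundary basepoints) emanating from the prescribed base and passing within bounded $k$-distance of $x$; the bound on the $k$-distance will be extracted from the uniformity inequality \eqref{uniform two} combined with the comparison \eqref{hyperbolize comparison} between $k$ and $d$.

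\emph{Unbounded case (1).} First I would fix $x \in \Omega$ and a target base ---say a point $z \in \Omega$--- and join them by a quasihyperbolic geodesic, then extend it to a quasihyperbolic geodesic ray $\gamma:[0,\infty) \to \Omega$ ending at the distinguished point $\omega \in \p Y$ (the equivalence class of the infinite-$d$-length rays, which exists and is unique by \cite[Proposition 3.12(a),(c)]{BHK}); properness of $Y$ and Arzel\`a--Ascoli let us pass from a geodesic segment to such a ray. To show $x$ is within bounded $k$-distance of \emph{some} ray from $z$, I would use that the quasihyperbolic geodesic from $z$ to $x$, after reparametrizing by $d$-arclength, is an $A_{*}$-uniform curve $\sigma:[0,\l_{d}(\sigma)] \to \bar\Omega$; by \eqref{uniform two} the $d$-length of the shorter subarc at any point $p = \sigma(t)$ is at most $A_{*} d(p)$. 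Applying Lemma \ref{other height} to the geodesic $zx$ gives a point $p$ on it with $d(p) \asymp_{C} d(z,x)$ realizing the maximum of $d$, and from this maximal point the $k$-distance to both endpoints is controlled: by \eqref{BHK 2.15} each half of $\sigma$ has $\l_{k} \leq 4A\log(1 + \l_{d}/\min\{d(\cdot),d(\cdot)\})$, which the uniformity bound $\l_{d} \leq A_{*}\min\{d(p)\}$-type estimate turns into an absolute constant depending only on $A$. The cleanest route is actually: show directly that \emph{every} point of $\Omega$ lies within $k$-distance $K(A)$ of every quasihyperbolic geodesic ray ending at $\omega$, by comparing an arbitrary such ray to the one through $x$ via \cite[Proposition 3.12(a)]{BHK} (bounded distance) plus a fixed bound for the distance from $x$ to the ray through $x$. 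Rough starlikeness from an interior point $z$ then follows because the ray from $z$ to $\omega$ is such a ray; rough starlikeness from $\omega \in \p Y$ follows by joining $x$ to $\omega$ directly; and rough starlikeness from \emph{other} boundary points $\xi \in \p Y \cong \p\Omega$ follows from Proposition \ref{rough star boundary}(1) once we know rough starlikeness from some interior point, OR directly by the reparametrization of a quasihyperbolic line through $\xi$ and $\omega$.

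\emph{Bounded case (2).} For the distinguished point $z$ with $d(z) = \sup_{x\in\Omega} d(x)$: given $x \in \Omega$, take a quasihyperbolic geodesic $zx$ and extend it to a ray from $z$ with an endpoint in $\p Y \cong \p\Omega$; reparametrizing by $d$-arclength gives an $A_{*}$-uniform curve $\sigma:[0,\l_d(\sigma)) \to \Omega$ with $\sigma(0) = z$, and since $d(z)$ is maximal, inequality \eqref{BHK 2.15} applied to the subarc from $z$ to $x$ bounds $k(z,x)$ by $4A\log(1 + \l_d/\min\{d(z),d(x)\})$; using $\l_d \leq A_{*}\min$(something)$\cdot$... more precisely, the uniformity \eqref{uniform two} at $x$ together with $d(z) \geq d(x)$ gives a bound depending only on $A$. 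Actually $x$ \emph{is on} the ray, so rough starlikeness from $z$ is immediate once we know every $x$ extends to such a ray --- the content is that the ray exists, which is \cite[Proposition 3.12]{BHK}, so here $K = 0$ essentially, with a genuine constant only appearing if we insist rays start exactly at $z$; I expect $K(A)$ absorbs this. For an arbitrary base point in the bounded case, invoke Proposition \ref{rough star boundary}: having rough starlikeness from the special interior point $z$ (with constant $K(A)$), part (2) of that proposition upgrades it to rough starlikeness from any $\omega \in \p Y$ with constant $K + S(z) + c(\delta)$ where $\delta = \delta(A)$, and then part (1) upgrades further to all of $Y \cup \p Y$; the extra parameter is $S(z)$, which by Lemma \ref{two points} is controlled by $(\omega|\xi)_z$ for any two distinct boundary points. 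The remaining task is to bound $S(z)$ in terms of $A$ and $\phi(\Omega)$: using \eqref{rough product}, $S(z) \asymp \dist(z, \xi\omega)$ for suitable $\xi,\omega \in \p\Omega$, and a quasihyperbolic geodesic from $\xi$ to $\omega$, reparametrized, is a uniform curve joining two metric boundary points whose $d$-lengths and heights are controlled by $\diam\,\Omega$; since these two boundary points can be chosen with $d(\xi,\omega) \asymp \diam\,\p\Omega$, Lemma \ref{other height} forces a point $p$ on the geodesic with $d(p) \asymp \diam\,\p\Omega$, and then $\dist(z,\xi\omega) \leq k(z,p)$ is bounded via \eqref{BHK 2.3} and \eqref{hyperbolize comparison} by $\log(\diam\,\Omega/d(p)) \lesssim \log\phi(\Omega)$, giving $K' = K'(A,\phi(\Omega))$.

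\emph{Main obstacle.} I expect the genuinely delicate point to be establishing rough starlikeness in the unbounded case with a constant depending \emph{only} on $A$ and independent of the basepoint in $\p Y$ --- in particular handling boundary basepoints $\xi \neq \omega$ uniformly. The subtlety is that a quasihyperbolic geodesic line through two boundary points of $\p\Omega$ need not pass near any prescribed $x$; one must argue that since $\Omega$ is unbounded, such a line can be taken to go out to $\omega$ (the infinite-length end), and then the $A_{*}$-uniformity inequality \eqref{simplified uniform} for geodesics ending at $\omega$ provides the needed control $\l_d(\gamma|_{[t,a]}) \leq A_{*}d(\gamma(t))$, which via \eqref{BHK 2.15} bounds $k$-distances along the line by an absolute constant. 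Keeping track that all constants reduce to functions of $A$ alone (using $\delta = \delta(A)$ and $A_* = A_*(A)$) rather than accumulating dependence on the chosen boundary points is the heart of the matter; the bounded case is comparatively soft once one pays the unavoidable $\phi(\Omega)$ toll via Lemma \ref{two points} and Lemma \ref{other height}.
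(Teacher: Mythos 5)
Your treatment of the bounded case (2) is essentially the paper's: rough starlikeness from the maximal-height point $z$ is imported from \cite[Theorem 3.6]{BHK}, $S(z)$ is bounded by combining Lemma \ref{two points}, Lemma \ref{other height} applied to a boundary pair realizing $\diam\,\p\Omega$, and \eqref{hyperbolize comparison}, and Proposition \ref{rough star boundary} then closes the argument. The unbounded case (1), however, has a genuine gap. Your ``cleanest route''---that \emph{every} point of $\Omega$ lies within $k$-distance $K(A)$ of \emph{every} quasihyperbolic geodesic ray ending at $\omega$---is false. Take $\Omega$ the Euclidean upper half-plane, so $Y$ is the hyperbolic plane and rays ending at $\omega$ are vertical rays; the hyperbolic distance from $(n,1)$ to the vertical ray over $u=0$ is $\operatorname{arccosh}\sqrt{n^{2}+1}$, which is unbounded. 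The underlying issue is that the ``bounded distance'' between two rays ending at $\omega$ (Lemma \ref{infinite triangle}, \cite[Proposition 3.12(a)]{BHK}) is bounded in terms of the distance between their starting points, not uniformly, so comparing an arbitrary such ray to one through $x$ gives no control independent of $x$. The parenthetical earlier in that paragraph---that \eqref{BHK 2.15} and the uniformity inequality turn the $k$-length of each half of the geodesic $zx$ into a constant depending only on $A$---cannot be right either: it would uniformly bound $k(z,x)$, hence $\diam Y$. Finally, your fallback ``rough starlikeness from other boundary points follows from Proposition \ref{rough star boundary}(1) once we know rough starlikeness from some interior point'' misapplies that proposition: part (1) takes a boundary basepoint as input, while passing from an interior basepoint to other basepoints goes through part (2), which introduces the $S(x)$ dependence you are precisely trying to avoid.

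The missing idea is the choice of direction to aim. The paper proves rough starlikeness from the distinguished point $\omega\in\p Y$ \emph{first}: given $x\in\Omega$, pick $\xi\in\p\Omega$ with $d(\xi,x)=d(x)$ (properness of $\bar\Omega$), take a quasihyperbolic geodesic line $\xi\omega$ parametrized by $d$-arclength as $\gamma:[0,\infty)\to\Omega$ with $\gamma(0)=\xi$, and use continuity plus $d(\gamma(t))\to\infty$ to find $s$ with $d(\gamma(s))=d(x)$. Setting $y=\gamma(s)$, the $A_{*}$-uniformity inequality \eqref{simplified uniform} gives $d(\xi,y)\le \ell_{d}(\gamma|_{[0,s]})\le A_{*}d(y)=A_{*}d(x)$, whence $d(x,y)\le(A_{*}+1)d(x)$, and \eqref{hyperbolize comparison} with $d(y)=d(x)$ yields $k(x,y)\le 4A^{2}\log(2+A_{*})$. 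With rough starlikeness from $\omega\in\p Y$ in hand, Proposition \ref{rough star boundary}(1) extends it to all of $Y\cup\p Y$ at the cost of an additive $10\delta=10\delta(A)$, giving $K=K(A)$ as required.
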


\begin{proof} 
We start with the case that $\Omega$ is unbounded. We let $\omega \in \p Y$ denote the equivalence class of all quasihyperbolic geodesic rays $\gamma$ satisfying $\l_{d}(\gamma) = \infty$. We will first show that $Y$ is $K$-roughly starlike from $\omega$. To this end we let $x \in \Omega$ be given. Since the completion $\bar{\Omega}$ of $\Omega$ is proper, we can find $\xi \in \p \Omega$ such that $d(\xi,x) = d(x)$. Let $\xi\omega$ be a quasihyperbolic geodesic from $\xi$ to $\omega$, and let $\gamma: [0,\infty) \rightarrow \Omega$ be a $d$-arclength reparametrization of this geodesic with $\gamma(0) = \xi$. Since $d(\gamma(t)) \rightarrow \infty$ as $t \rightarrow  \infty$ by \cite[Proposition 3.12(a)]{BHK} and $d(\gamma(0)) = 0$, by continuity we can find some $s > 0$ such that $d(\gamma(s)) = d(x)$. We set $y = \gamma(s)$. By \eqref{uniform two} and the fact that $\l_{d}(\gamma|_{[s,\infty)}) = \infty$ , we then have
\begin{align*}
d(x,y) &\leq d(\xi,y) + d(\xi,x) \\
&\leq\l_{d}(\gamma|_{[0,s]}) + d(x) \\
&\leq A_{*}d(y) + d(x) \\
&= (A_{*}+1)d(x), 
\end{align*}
Thus by \eqref{hyperbolize comparison}, using again that $d(y) = d(x)$, 
\[
k(x,y) \leq 4A^{2}\log \left(1+ \frac{d(x,y)}{d(x)}\right) \leq 4A^{2}\log(2+A_{*}).
\]
Thus $Y$ is $K$-roughly starlike from $\omega$ with $K =4A^{2}\log(2+A_{*})$. Since $Y$ is $\delta$-hyperbolic with $\delta = \delta(A)$, the rough starlikeness claim from all other points of $Y \cup \p Y$ then follows from (1) of Proposition \ref{rough star boundary}.

Now assume that $\Omega$ is bounded. In this case the $K$-rough starlikeness with $K = K(A)$ from points $z \in \Omega$ satisfying  $d(z) = \sup_{x \in \Omega}d(x)$ follows from the proof of \cite[Theorem 3.6]{BHK}, specifically the discussion after \cite[(3.13)]{BHK}. If $\p \Omega$ contains at least two points then the existence of a constant $K'$ such that $Y$ is $K'$-roughly starlike from any point of $Y \cup \p Y$ then follows from Proposition \ref{all star}, with quantitative dependence on $A$ and an upper bound on the Gromov product based at a point $z$ with $d(z) = \sup_{x \in \Omega}d(x)$ of any two distinct points in $\p Y \cong \p \Omega$ following from Lemma \ref{two points}, (2) of Proposition \ref{rough star boundary}, and the fact that $Y$ is $\delta$-hyperbolic with $\delta = \delta(A)$. We thus need to show that an upper bound on the ratio $\phi(\Omega)$ leads to an upper bound on the Gromov product based at $z$ of a specific choice of two points in $\p Y$. 

By the properness of the completion $\bar{\Omega}$ of $\Omega$ we can choose points $x,y \in \p \Omega$ such that 
\[
d(x,y) = \diam \,\p \Omega = \phi(\Omega)^{-1}\diam \, \Omega.
\]
Since these points are distinct we can find a quasihyperbolic geodesic $xy$ joining $x$ to $y$ in $\Omega$. By Lemma \ref{other height} we can then find $p \in \gamma$ such that 
\[
d(p) \asymp_{C(A)} d(x,y) = \phi(\Omega)^{-1}\diam \, \Omega.
\] 
Since $d(p) \leq d(z)$ and $d(p,z) \leq \diam \, \Omega$, it follows from \eqref{hyperbolize comparison} that
\begin{equation}\label{phi up}
k(p,z) \leq \log(1+C(A)\phi(\Omega)). 
\end{equation}
By Lemma \ref{interior basepoint} we have $(x|y)_{z} \doteq_{c(A)} \dist(z,xy)$ since $\delta = \delta(A)$ (with distances measured in $Y$). By combining this with \eqref{phi up} we conclude that
\[
(x|y)_{z} \leq k(p,z)+c(A) \leq \log(1+C(A)\phi(\Omega))+c(A),
\]
which gives the desired estimate for $K'$ by Lemma \ref{two points} and Proposition \ref{rough star boundary}.
\end{proof}

The second part of claim (2) is false when $\p \Omega$ consists of a single point, as the example $\Omega = [0,1)$ shows (since the quasihyperbolization is isometric to $[0,\infty)$).  When $\p \Omega$ contains more than one point a family of examples shows in (2) that $K'$ cannot be taken to be quantitative in $A$ in general, nor in $\diam \, \Omega$ or $\diam \, \p \Omega$ individually. 

\begin{ex}\label{dependence uniform}
Let $r,s > 0$ be positive parameters. We consider the incomplete geodesic metric space $\Omega = (-r,r) \cup_{0 \sim 0}[0,s]$ obtained by gluing the interval $[0,s]$ onto $(-r,r)$ at $0$. Clearly $\Omega$ is $1$-uniform for any choice of $r,s$, with $\diam \, \Omega = \max\{s,r\}+r$ and $\diam \, \p \Omega = 2r$. The quasihyperbolization of $\Omega$ can be computed to be isometric to the space $X_{t} = \R \cup_{0 \sim 0} [0,t]$ considered in Example \ref{dependence} with $t  = \log(1+\frac{s}{r})$. As noted in that example, the space $X_{t}$ is at best $t$-roughly starlike from the origin $0 \in \R$. Since $r$ and $s$ can be chosen freely, we conclude that we can make $t$ arbitrarily large even though $\Omega$ is $1$-uniform. We can also individually make either $\diam \, \Omega$ or $\diam \, \p \Omega$ arbitrarily small or large by choosing the parameters appropriately. We have the bound $\frac{s}{r} \leq 2\phi(\Omega)$ from the formulas for $\diam \, \Omega$ and $\diam \, \p \Omega$, which leads to the estimate $t \leq \log(1+2\phi(\Omega))$ in accordance with (2) of Proposition \ref{uniform starlike}.
\end{ex}

For the rest of this section we will assume that $\Omega$ is unbounded. We let $\omega \in \p Y$ denote the distinguished point corresponding to the equivalence class of all quasihyperbolic geodesic rays $\gamma$ satisfying $\l_{d}(\gamma) = \infty$. The next lemma gives us more precise control over the function $t \rightarrow d(\gamma(t))$ for a quasihyperbolic geodesic $\gamma$ starting from $\omega$. 

\begin{lem}\label{height control}
There are constants $C = C(A) \geq 1$ and $0 < u = u(A) \leq 1$  such that if $\gamma: (-\infty,a] \rightarrow Y$, $a \in (-\infty,\infty]$, is a quasihyperbolic geodesic starting from $\omega$ then for all $t \leq s \leq a$, 
\begin{equation}\label{desired}
e^{-(s-t)} \leq \frac{d(\gamma(s))}{d(\gamma(t))} \leq Ce^{-u(s-t)}.
\end{equation}
\end{lem}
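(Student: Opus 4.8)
The plan is to prove the two bounds of \eqref{desired} separately; the left inequality is essentially free, and all the work is in the right inequality. Since $\gamma$ is a quasihyperbolic geodesic it is parametrized by $k$-arclength, so $k(\gamma(t),\gamma(s)) = s-t$ for $t \le s \le a$. The left inequality then follows at once from \eqref{BHK 2.3}: it gives $\left|\log\frac{d(\gamma(t))}{d(\gamma(s))}\right| \le k(\gamma(t),\gamma(s)) = s-t$, which rearranges to $e^{-(s-t)} \le \frac{d(\gamma(s))}{d(\gamma(t))}$ (indeed with $C=1$ on this side).

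For the upper bound, the key input is that $\gamma$ is a quasihyperbolic geodesic having $\omega$ as an endpoint, parametrized in the form $(-\infty,a]$, so the simplified uniformity estimate \eqref{simplified uniform} is available and gives $\l_d(\gamma|_{[t,a]}) \le A_* d(\gamma(t))$ for every $t$. Since $[t,s] \subseteq [t,a]$, this yields both
\[
d(\gamma(t),\gamma(s)) \le \l_d(\gamma|_{[t,s]}) \le A_* d(\gamma(t))
\]
and, via \eqref{lip boundary distance}, the crude bound $d(\gamma(s)) \le (1+A_*) d(\gamma(t))$. Feeding the first display into the upper bound of \eqref{hyperbolize comparison} gives
\[
s-t = k(\gamma(t),\gamma(s)) \le 4A^2 \log\!\left(1 + \frac{A_* d(\gamma(t))}{\min\{d(\gamma(t)),d(\gamma(s))\}}\right).
\]
I would then split on whether $d(\gamma(s)) \ge d(\gamma(t))$ or not. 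In the first case the right-hand side equals the absolute constant $4A^2\log(1+A_*)$, so combining it with $d(\gamma(s)) \le (1+A_*)d(\gamma(t))$ yields \eqref{desired} as soon as $C$ is large enough relative to $A_*$. In the second case $\min\{d(\gamma(t)),d(\gamma(s))\} = d(\gamma(s))$ and $1 + \tfrac{A_* d(\gamma(t))}{d(\gamma(s))} \le \tfrac{(1+A_*)d(\gamma(t))}{d(\gamma(s))}$, so the displayed inequality becomes $s-t \le 4A^2\log\!\big((1+A_*)d(\gamma(t))/d(\gamma(s))\big)$, which rearranges directly to $\frac{d(\gamma(s))}{d(\gamma(t))} \le (1+A_*)\,e^{-(s-t)/(4A^2)}$. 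Taking $u = \frac{1}{4A^2}$ and $C = (1+A_*)^2$ then covers both cases; one checks $u \le 1$ (since $A \ge 1$) and $C \ge 1$, and as $A_* = A_*(A)$ the constants depend only on $A$.

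I do not anticipate a genuine obstacle. The only points needing care are that \eqref{simplified uniform} is legitimately applicable here — which it is, since $\gamma$ is a quasihyperbolic geodesic starting from $\omega$, hence an $A_*$-uniform curve in $\Omega$ of exactly the required form, and $d$-lengths of subcurves are parametrization-independent — and that the two cases assemble into a single pair of constants $C=C(A)$, $u=u(A)$. If one preferred an argument not invoking \eqref{simplified uniform}, one could instead reparametrize $\gamma|_{[t,a]}$ by $d$-arclength and use the uniformity lower bound $d(\gamma(w)) \ge A_*^{-1}\min\{w,\l_d(\gamma|_{[t,a]})-w\}$ directly, but the route above is shorter.
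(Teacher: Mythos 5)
Your proof is correct, and it takes a genuinely different and substantially simpler route than the paper's. Let me verify the key steps: \eqref{simplified uniform} applies exactly as you say, since $\gamma: (-\infty,a] \to Y$ is an $A_*$-uniform curve (by Lemma \ref{subcurve uniform}) with $\l_d(\gamma|_{(-\infty,t]}) = \infty$, and $\l_d(\gamma|_{[t,a]}) < \infty$ because $\gamma|_{[t,a]}$ does not have $\omega$ as an endpoint. Then $d(\gamma(t),\gamma(s)) \le \l_d(\gamma|_{[t,s]}) \le A_*\,d(\gamma(t))$ and $d(\gamma(s)) \le (1+A_*)\,d(\gamma(t))$ via \eqref{lip boundary distance}. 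In Case 1 ($d(\gamma(s)) \ge d(\gamma(t))$) you get $s-t \le 4A^2\log(1+A_*)$ together with the ratio bound $1+A_*$, and in Case 2 the rearrangement gives the exponential decay directly; taking $u = 1/(4A^2)$ and $C = (1+A_*)^2$ closes both cases (in Case 1 the worst case is equality when $s-t = 4A^2\log(1+A_*)$).

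The paper proves the upper bound by a considerably more elaborate argument: it first restricts to $a = \infty$, constructs a geometric subdivision $\{t_n\}$ of the ray by repeatedly halving the remaining $d$-length, proves that the increments $t_{n+1}-t_n$ lie in a fixed interval $[c_0,c_1]$ depending only on $A$, shows that $h = -\log d$ increases by a definite amount over a fixed number of steps, and then telescopes. It then handles $a < \infty$ separately by producing, via rough starlikeness and Lemma \ref{infinite triangle}, a bi-infinite geodesic $\sigma$ from $\omega$ that fellow-travels $\gamma$, and transfers the $a = \infty$ estimate. Your argument avoids the subdivision machinery entirely, treats $a < \infty$ and $a = \infty$ uniformly (since \eqref{simplified uniform} holds for any $a \in (-\infty,\infty]$), and yields cleaner explicit constants. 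Both proofs rely on the same underlying inputs — the uniformity of quasihyperbolic geodesics and \eqref{hyperbolize comparison} — but yours extracts the conclusion in one pass rather than through an iterated construction.
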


\begin{proof}

Let $t \leq s \leq a$ be given. Put $h(x) = -\log d(x)$. The inequality \eqref{desired} is equivalent to the inequality
\begin{equation}\label{transformed desired}
s-t \geq h(\gamma(s))-h(\gamma(t)) \geq u(s-t) - c,
\end{equation}
with $c=c(A) \geq 0$ depending only on $A$. We will prove inequality \eqref{desired} in the form \eqref{transformed desired}. The left side of \eqref{transformed desired} follows immediately from the fact that $h$ is $1$-Lipschitz in the quasihyperbolic metric $k$ on $\Omega$ by \eqref{BHK 2.3}.

Verifying the right side of \eqref{transformed desired} is more involved. We will first prove this claim in the case $a = \infty$. We let $\xi \in \p \Omega$ denote the endpoint of the quasihyperbolic geodesic ray $\gamma|_{[0,\infty)}$ in $\p \Omega$.   We set $t_{0} = t$ and for each $n \in \N$ we choose $t_{n}$ inductively such that $t_{n} > t_{n-1}$ and 
\[
\l_{d}(\gamma|_{[t_{n-1},t_{n}]}) = \frac{1}{2}\l_{d}(\gamma|_{[t_{n-1},\infty)}).
\]
Note that $\l_{d}(\gamma|_{[t_{n-1},t_{n}]}) > 0$ since $t_{n} > t_{n-1}$ and that  $\l_{d}(\gamma|_{[t_{n-1},\infty)}) < \infty$ since $\xi \neq \omega$. The above equality implies that
\[
\l_{d}(\gamma|_{[t_{n},\infty)}) = \frac{1}{2}\l_{d}(\gamma|_{[t_{n-1},\infty)}),
\]
which gives us the equality for $n \in \N$, 
\begin{equation}\label{cutting}
\l_{d}(\gamma|_{[t_{n-1},t_{n}]}) = 2\l_{d}(\gamma|_{[t_{n},t_{n+1}]}).
\end{equation}
By \eqref{simplified uniform} we then have
\[
\l_{d}(\gamma|_{[t_{n},t_{n+1}]}) \leq \l_{d}(\gamma|_{[t_{n},\infty)}) \leq A_{*} d(\gamma(t_{n})),
\]
We thus obtain from \eqref{BHK 2.15} and the equality \eqref{cutting}, for $n \geq 0$, 
\begin{align*}
t_{n+1}-t_{n} = \l_{k}(\gamma|_{[t_{n},t_{n+1}]}) &\leq 4A \log \left(1+ \frac{\l_{d}(\gamma|_{[t_{n},t_{n+1}]})}{\min\{d(\gamma(t_{n})),d(\gamma(t_{n+1}))\}}\right) \\
&\leq  4A \log \left(1+ \frac{A_{*}\l_{d}(\gamma|_{[t_{n},t_{n+1}]})}{\min\{\l_{d}(\gamma|_{[t_{n},t_{n+1}]}) ,\l_{d}(\gamma|_{[t_{n+1},t_{n+2}]})\}}\right) \\
&= c_{1},
\end{align*}
with $c_{1} = c_{1}(A) > 0$ depending only on $A$. 

For a lower bound on $t_{n+1}-t_{n}$ we observe that, since $\l_{d}(\gamma|_{[t_{n},\infty)}) = 2\l_{d}(\gamma|_{[t_{n},t_{n+1}]})$, we have for all $n \geq 0$, 
\[
d(\gamma(t_{n})) \leq d(\gamma(t_{n}),\xi) \leq 2\l_{d}(\gamma|_{[t_{n},t_{n+1}]}). 
\]
Thus \eqref{hyperbolize comparison} implies, using $t_{n+1}-t_{n} = \l_{k}(\gamma|_{[t_{n},t_{n+1}]})$ as above,
\begin{align*}
t_{n+1}-t_{n} &\geq \log \left(1+ \frac{d(\gamma(t_{n}),\gamma(t_{n+1}))}{\min\{d(\gamma(t_{n})),d(\gamma(t_{n+1}))\}}\right) \\
&\geq \log \left(1+ \frac{A_{*}^{-1}\l_{d}(\gamma|_{[t_{n},t_{n+1}]})}{\min\{\l_{d}(\gamma|_{[t_{n},\infty)}),\l_{d}(\gamma|_{[t_{n+1},\infty)})\}}\right) \\
&= \log \left(1+ \frac{A_{*}^{-1}\l_{d}(\gamma|_{[t_{n},t_{n+1}]})}{2\min\{\l_{d}(\gamma|_{[t_{n},t_{n+1}]}),\l_{d}(\gamma|_{[t_{n+1},t_{n+2}]})\}}\right) \\
&= c_{0}, 
\end{align*}
with $c_{0} = c_{0}(A) > 0$ depending only on $A$. We have thus shown that there are positive constants $c_{0}$ and $c_{1}$ depending only on $A$  such that for all $n \geq 0$, 
\begin{equation}\label{initial}
c_{0} \leq t_{n+1}-t_{n} \leq c_{1}.
\end{equation}

On the other hand we have, by $A_{*}$-uniformity of $\gamma$ and the construction of the subdivision $\{t_{n}\}$ of $[t,a]$, for each $n \geq 0$ and $m \geq 1$,
\begin{align*}
d(\gamma(t_{n+m})) &\leq \l_{d}(\gamma|[t_{n+m},\infty)) \\
&= \frac{1}{2^{m}}\l_{d}(\gamma|[t_{n},\infty)) \\
&\leq \frac{A_{*}}{2^{m}}d(\gamma(t_{n})).
\end{align*}
We choose $m = m(A)$ to be the minimal positive integer such that $\frac{A_{*}}{2^{m}} < 1$. Then $c_{2} = - \log \frac{A_{*}}{2^{m}}$ is positive and we have from above that
\begin{equation}\label{step initial}
h(\gamma(t_{n+m})) \geq h(\gamma_{t_{n}}) + c_{2}. 
\end{equation}
Since $m$ depends only on $A$, we deduce from \eqref{initial} the inequality
\begin{equation}\label{further initial}
c_{0} \leq t_{n+m}-t_{n} \leq c_{1},
\end{equation}
with $c_{0}$ and $c_{1}$ being (possibly different) positive constants still depending only on $A$. 

Since $h$ is 1-Lipschitz in the quasihyperbolic metric $k$, using \eqref{step initial} and \eqref{further initial} (and replacing $c_{0}$ with $c_{0}' = \min\{c_{0},c_{2}\}$ if necessary) we conclude that we also have  
\[
c_{0} \leq h(\gamma(t_{n+m})) - h(\gamma(t_{n})) \leq c_{1},
\]
for each $n \geq 0$. There is thus a $C = C(A) \geq 1$ such that
\[
h(\gamma(t_{n+m})) - h(\gamma(t_{n})) \asymp_{C} t_{n+m} - t_{n}.
\]
By starting with $n = 0$, recalling $t_{0} = t$, and summing this inequality, we obtain for each integer $q \geq 0$ that 
\begin{equation}\label{pre-transition}
h(\gamma(t_{qm}))-h(\gamma(t))  \asymp_{C} t_{qm} - t.
\end{equation}
The inequality \eqref{further initial} also shows that $t_{qm} \rightarrow \infty$ as $q \rightarrow \infty$. There will thus be an integer $q \geq 0$ such that $t_{qm} \leq s < t_{(q+1)m}$, recalling that $t_{0} = t \leq s$. Applying \eqref{further initial} with $n = qm$ then allows us to conclude that 
\begin{equation}\label{transition}
0 \leq s-t_{qm}\leq c_{1}.
\end{equation}
Since $h$ is $1$-Lipschitz in the metric $k$, by combining inequality \eqref{transition} with the comparison \eqref{pre-transition} we obtain that
\begin{align*}
h(\gamma(s))-h(\gamma(t)) &= (h(\gamma(s)) - h(\gamma(t_{qm})) + (h(\gamma(t_{qm})-h(\gamma(t))) \\
&\geq t_{qm}-s + C^{-1}(t_{qm}-t) \\
&\geq C^{-1}(s-t) -c,
\end{align*}
with $c = c(A) \geq 0$ depending only on $A$. This gives the desired inequality upon setting $u = C^{-1}$. 

We now consider the case $a < \infty$. We only need to establish the right side of inequality \eqref{desired}, as the left side has already been deduced from the fact that $h$ is $1$-Lipschitz in the quasihyperbolic metric. By Proposition \ref{uniform starlike} we can find a quasihyperbolic geodesic $\sigma: \R \rightarrow Y$ starting from $\omega$ and parametrized such that $k(\gamma(a),\sigma(a)) \leq K$, with $K = K(A)$. Then Lemma \ref{infinite triangle} implies that for all $t \in (-\infty,a]$ we have
\[
|\gamma(t)\sigma(t)| \leq 3K + 8\delta = c(A). 
\]
Thus by \eqref{BHK 2.3} we have that $d(\gamma(t)) \asymp_{C} d(\sigma(t))$ for all $t \in (-\infty,a]$ with $C = C(A)$. The right side of inequality \eqref{desired} for $\gamma$ then follows from the corresponding side of the inequality for $\sigma$. 
\end{proof}

Let $b: Y \rightarrow \R$ be a Busemann function based at the distinguished point $\omega \in \p Y$. Our next lemma shows that maximization of the distance from $\p \Omega$ and minimization of the Busemann function $b$ occurs at the same point $p$ on a quasihyperbolic geodesic between two points $x_{1}$ and $x_{2}$ of $\bar{\Omega}$, up to a bounded error determined only by $A$. The existence of the point $p$ below is given by Lemma \ref{other height}.

\begin{lem}\label{height transition Busemann}
Let $x_{1},x_{2} \in \bar{\Omega}$ and let $x_{1}x_{2}$ be a quasihyperbolic geodesic joining them. Let $\Delta = \omega x_{1}x_{2}$ be a geodesic triangle with the prescribed vertices and let $T: \Delta \rightarrow \Upsilon$ be an associated tripod map. Let $p \in x_{1}x_{2}$ be such that $d(p) = \sup_{x \in x_{1}x_{2}} d(x)$.  Then for each equiradial point $z \in \Delta$ of $T$ we have $k(p,z)\leq c(A)$ and therefore
\begin{equation}\label{first height}
b(p) \doteq_{c} (x_{1}|x_{2})_{b} \doteq_{c} \inf_{x \in x_{1}x_{2}} b(x),
\end{equation}
with $c = c(A)$. 
\end{lem}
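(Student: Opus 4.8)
The plan is to reduce everything to the single estimate $k(p,z)\leq c(A)$ for the equiradial point $z$ of $T$ lying on the edge $x_1x_2$. Granting this, the rough equalities \eqref{first height} follow quickly: applying Proposition~\ref{compute Busemann} to the triangle $\Delta=\omega x_1x_2$ and the Busemann function $b$ (which is based at $\omega$), the equiradial point $z$ on $x_1x_2$ satisfies $T(z)=o$, so $b_{\Upsilon}(T(z))=0$ and \eqref{tripod image} gives $b(z)\doteq_{c(\delta)}(x_1|x_2)_b$, while \eqref{tripod minimum} gives $(x_1|x_2)_b\doteq_{c(\delta)}\inf_{x\in x_1x_2}b(x)$. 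Since $b$ is $1$-Lipschitz in the quasihyperbolic metric $k$ we have $|b(p)-b(z)|\leq k(p,z)\leq c(A)$, and because $\delta=\delta(A)$ all these additive errors combine into a single $c(A)$, yielding \eqref{first height}. Moreover the three equiradial points of $T$ lie within $c(\delta)$ of each other by property (1) of Proposition~\ref{compute consequence}, so a bound on $k(p,z)$ for the one on $x_1x_2$ gives the same bound (up to $c(\delta)=c(A)$) for all of them.

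To estimate $k(p,z)$ I would set up the parametrizations furnished by Proposition~\ref{compute consequence}: geodesics $\gamma_1:(-\infty,a_1]\to Y$ and $\gamma_2:(-\infty,a_2]\to Y$ of $\omega x_1$ and $\omega x_2$, each starting from $\omega$ in the sense that the parameter tends to $-\infty$ toward $\omega$, together with the isometric geodesic parametrization $\sigma:I\to Y$ of $x_1x_2$ with $\sigma(0)=z$ the equiradial point, such that (property (3)) $k(\sigma(t),\gamma_1(-t))\leq c(\delta)$ for $t\leq 0$ and $k(\sigma(t),\gamma_2(t))\leq c(\delta)$ for $t\geq 0$, and (property (1)) $k(\sigma(0),\gamma_i(0))\leq c(\delta)$. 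Write $p=\sigma(t_0)$, which is legitimate since $p\in x_1x_2$ (the existence of $p$ being Lemma~\ref{other height}); as $\sigma$ is isometric, $k(p,z)=|t_0|$, so the task becomes to show $|t_0|\leq c(A)$.

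Assume $t_0\geq 0$; the case $t_0\leq 0$ is symmetric with $r_0=-t_0$ and $\gamma_1$ in place of $\gamma_2$. Since $\gamma_2$ is a quasihyperbolic geodesic starting from $\omega$, Lemma~\ref{height control} applied with the parameters $0\leq t_0\leq a_2$ gives $d(\gamma_2(t_0))\leq Ce^{-ut_0}d(\gamma_2(0))$ with $C=C(A)$ and $u=u(A)$. Passing from $\gamma_2$-values to $\sigma$-values via $k(\sigma(t_0),\gamma_2(t_0))\leq c(\delta)$, $k(\sigma(0),\gamma_2(0))\leq c(\delta)$ and the estimate \eqref{BHK 2.3} (bounded quasihyperbolic moves change $d$ by a bounded factor) turns this into $d(p)=d(\sigma(t_0))\leq C'e^{-ut_0}d(\sigma(0))$ with $C'=C'(A)$. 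On the other hand, by the defining property of $p$ we have $d(p)=\sup_{x\in x_1x_2}d(x)\geq d(z)=d(\sigma(0))$. Combining the two inequalities gives $1\leq C'e^{-ut_0}$, hence $t_0\leq u^{-1}\log C'=c(A)$, and symmetrically $|t_0|\leq c(A)$ in general; this is the desired $k(p,z)\leq c(A)$.

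The crux of the argument is exactly this estimate $k(p,z)\leq c(A)$, and its essential input is Lemma~\ref{height control}: the point is that $d$ decays along the side geodesics $\gamma_1,\gamma_2$ emanating from $\omega$ at a definite exponential rate $e^{-u(\cdot)}$, which forces the point where $d$ is maximized on $x_1x_2$ to be within bounded quasihyperbolic distance of the equiradial point. (Without the lower bound on the decay rate one could only control $d(p)$ up to a multiplicative constant, which is insufficient to localize $p$.) Everything else is routine: the fellow-traveling of the sides of the thin triangle $\Delta$, quantified by Proposition~\ref{compute consequence}, and the comparison \eqref{BHK 2.3} controlling how $d$ varies under bounded moves in $k$.
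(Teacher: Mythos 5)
Your proof is correct and follows essentially the same route as the paper: parametrize via Proposition~\ref{compute consequence}, transfer the decay of $d$ along $\gamma_1,\gamma_2$ (Lemma~\ref{height control}) to $\sigma$ through fellow-traveling and \eqref{BHK 2.3}, and play this against the maximality of $d$ at $p$ to localize $p$ near the equiradial point $\sigma(0)$; the ``therefore'' part is handled identically through Proposition~\ref{compute Busemann} and the $1$-Lipschitz property of $b$. The only cosmetic difference is that the paper first records the two-sided exponential bound on $d(\sigma(s))$ for all $s$ before specializing, whereas you specialize directly at $s=t_0$.
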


\begin{proof}
Let $x_{1},x_{2} \in \bar{\Omega}$ be given. We will consider these as points of $Y \cup \p_{\omega}Y$ using the discussion at the beginning of this section. We let $\gamma_{i}: (-\infty,a_{i}] \rightarrow X$ and $\sigma: I \rightarrow X$ be the parametrizations of $\omega x_{i}$ for $i = 1,2$ and of $x_{1}x_{2}$ that are given by Proposition \ref{compute consequence} applied to a geodesic triangle $\Delta = \omega x_{1}x_{2}$ with associated tripod map $T: \Delta \rightarrow \Upsilon$. Since $\delta = \delta(A)$ the conclusions of Proposition \ref{compute consequence} hold with $c(\delta) = c(A)$; we will implicitly be using variations of this observation throughout the rest of the proof.  

By (3) of Proposition \ref{compute consequence} we have for $t \in I_{\leq 0}$ that $k(\sigma(t),\gamma_{1}(-t)) \leq c(A)$ and for $t \in I_{\geq 0}$ that $k(\sigma(t),\gamma_{2}(t)) \leq c(A)$. By \eqref{BHK 2.3} we then have for $t\in I_{\leq 0}$,
\[
d(\sigma(t)) \asymp_{C(A)}d(\gamma_{1}(-t)),
\]
and for $t \in I_{\geq 0}$, 
\[
d(\sigma(t)) \asymp_{C(A)} d(\gamma_{2}(t)).
\]
By Lemma \ref{height control} applied to $\gamma_{1}$ and $\gamma_{2}$ with $t = 0$, together with the fact that $d(\gamma_{i}(0)) \asymp_{C(A)} d(\sigma(0))$ for $i = 1,2$ since $k(\gamma_{i}(0),\sigma(0)) \leq c(A)$ by (1) of Proposition \ref{compute consequence}, we thus deduce that for $s \in I$, 
\begin{equation}\label{constraint}
C^{-1}e^{-|s|}d(\sigma(0)) \leq d(\sigma(s)) \leq Ce^{-u|s|}d(\sigma(0)),
\end{equation}
with  $C = C(A) \geq 1$ and $0 < u = u(A) \leq 1$. 

Let $p \in x_{1}x_{2}$ be given such that $d(p) = \sup_{x \in x_{1}x_{2}} d(x)$. Let $t \in I$ be such that $\sigma(t)= p$. Then $d(\sigma(0)) \leq d(\sigma(t))$ and therefore inequality \eqref{constraint} implies that
\[
d(\sigma(0)) \leq d(\sigma(t)) \leq Ce^{-u|s|}d(\sigma(0)),
\]
which implies that $|s| \leq c(A)$. Since $|s| = k(p,\sigma(0))$, combining this with another application of (1) of Proposition \ref{compute consequence} gives that $k(p,z) \leq c(A)$ for each equiradial point $z$ for the tripod map $T$. The rough equality \eqref{first height} follows by Proposition \ref{compute Busemann} and the fact that $b$ is $1$-Lipschitz.  
\end{proof}

Combining Lemmas \ref{height control} and \ref{height transition Busemann} leads to the following key estimate on ratios of distances for points in $\bar{\Omega}$.

\begin{lem}\label{stepping stone quasisymmetric}
Let $x_{i} \in \bar{\Omega}$, $i =1,2,3$ be given distinct points. Let $\Delta_{1} = \omega x_{1}x_{2}$ and $\Delta_{2} = \omega x_{1}x_{3}$ be geodesic triangles that share the edge $\omega x_{1}$ and let $T_{i}: \Delta \rightarrow \Upsilon$ be associated tripod maps. Let $\gamma_{1,i}$ be the parametrizations of $\omega x_{1}$ given by applying Proposition \ref{compute consequence} to $\Delta_{i}$, $i = 1,2$, and define $s$ such that $\gamma_{1,1}(s) = \gamma_{1,2}(0)$. Then 
\[
\frac{d(x_{1},x_{2})}{d(x_{1},x_{3})} \leq \beta(e^{s}),
\]
where $\beta:[0,\infty) \rightarrow [0,\infty)$ is a homeomorphism given by $\beta(t) = C\max\{t,t^{u}\}$ with $C = C(A) \geq 1$ depending only on $A$ and $0 < u = u(A) \leq 1$ being the constant of Lemma \ref{height control}.
\end{lem}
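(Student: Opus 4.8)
The plan is to reduce the ratio $d(x_{1},x_{2})/d(x_{1},x_{3})$ to a ratio of values of the height function $t \mapsto d(\gamma_{1,1}(t))$ along the shared geodesic $\omega x_{1}$, and then control that ratio using the exponential bounds of Lemma \ref{height control}.

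First I would invoke Lemma \ref{other height} to produce points $p_{2} \in x_{1}x_{2}$ and $p_{3} \in x_{1}x_{3}$ at which $d$ is maximized along the respective quasihyperbolic geodesics, so that $d(x_{1},x_{2}) \asymp_{C(A)} d(p_{2})$ and $d(x_{1},x_{3}) \asymp_{C(A)} d(p_{3})$. Then I would apply Lemma \ref{height transition Busemann} to the triangles $\Delta_{1}$ and $\Delta_{2}$: it gives $k(p_{2},z) \leq c(A)$ for every equiradial point $z$ of $T_{1}$ and $k(p_{3},z') \leq c(A)$ for every equiradial point $z'$ of $T_{2}$. Taking $z = \gamma_{1,1}(0)$ and $z' = \gamma_{1,2}(0) = \gamma_{1,1}(s)$ and using \eqref{BHK 2.3}, this yields $d(p_{2}) \asymp_{C(A)} d(\gamma_{1,1}(0))$ and $d(p_{3}) \asymp_{C(A)} d(\gamma_{1,1}(s))$, hence $d(x_{1},x_{2})/d(x_{1},x_{3}) \asymp_{C(A)} d(\gamma_{1,1}(0))/d(\gamma_{1,1}(s))$. (Here I use that $\Delta_{1}$ and $\Delta_{2}$ share the edge $\omega x_{1}$, so $\gamma_{1,1}$ and $\gamma_{1,2}$ parametrize the same geodesic and differ only by the shift $s$.)

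It then remains to bound $d(\gamma_{1,1}(0))/d(\gamma_{1,1}(s))$. The curve $\gamma_{1,1}: (-\infty,a_{1,1}] \to Y$ is a quasihyperbolic geodesic starting from $\omega$, so Lemma \ref{height control} applies. If $s \geq 0$, the left-hand inequality of \eqref{desired} applied with the roles $t = 0$, $s' = s$ gives $d(\gamma_{1,1}(0))/d(\gamma_{1,1}(s)) \leq e^{s}$; if $s < 0$, the right-hand inequality of \eqref{desired} applied with $t = s$, $s' = 0$ gives $d(\gamma_{1,1}(0))/d(\gamma_{1,1}(s)) \leq Ce^{us}$ with $C = C(A)$. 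Since $e^{s} = \max\{e^{s},e^{us}\}$ when $s \geq 0$ and $e^{us} = \max\{e^{s},e^{us}\}$ when $s < 0$ (using $0 < u \leq 1$), the two cases combine to $d(\gamma_{1,1}(0))/d(\gamma_{1,1}(s)) \leq C(A)\max\{e^{s},e^{us}\}$. Absorbing the earlier multiplicative constants and setting $\beta(t) = C\max\{t,t^{u}\}$ — which is a homeomorphism of $[0,\infty)$ since it equals $Ct^{u}$ on $[0,1]$ and $Ct$ on $[1,\infty)$ — yields the claimed bound.

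There is no substantive obstacle here beyond careful assembly of the preceding lemmas. The two points requiring attention are: (i) checking that both $0$ and $s$ lie in the domain $(-\infty,a_{1,1}]$ of $\gamma_{1,1}$, which is immediate since $\gamma_{1,1}$ parametrizes all of $\omega x_{1}$ and $0$, $s$ are parameters of points on that geodesic; and (ii) keeping track of the directions of the inequalities in Lemma \ref{height control} in the two sign cases for $s$, so that in each case one extracts an \emph{upper} bound on $d(\gamma_{1,1}(0))/d(\gamma_{1,1}(s))$ of the correct exponential form.
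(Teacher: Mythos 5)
Your proof is correct and follows essentially the same route as the paper: locate the maximizing points of $d$ on the two quasihyperbolic geodesics via Lemma \ref{other height}, identify them (up to bounded quasihyperbolic distance) with the equiradial points $\gamma_{1,1}(0)$ and $\gamma_{1,2}(0) = \gamma_{1,1}(s)$ via Lemma \ref{height transition Busemann} and \eqref{BHK 2.3}, and then apply the two sides of \eqref{desired} according to the sign of $s$ to bound the resulting height ratio. The only differences are cosmetic labeling (the paper calls the maximizing points $p_1$ and $p_2$ rather than $p_2$ and $p_3$).
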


\begin{proof}
Let $p_{1} \in x_{1}x_{2}$ and $p_{2} \in x_{1}x_{3}$ be such that $d(p_{1}) = \sup_{x \in x_{1}x_{2}} d(x)$ and $d(p_{2}) = \sup_{x \in x_{1}x_{3}} d(x)$. Then Lemma \ref{height transition Busemann} implies that $k(p_{1},\gamma_{1,1}(0)) \leq c(A)$ and $k(p_{2},\gamma_{1,2}(0)) \leq c(A)$, which implies that $d(p_{1}) \asymp_{C} d(\gamma_{1,1}(0))$ and $d(p_{2}) \asymp_{C} d(\gamma_{1,2}(0))$ with $C = C(A)$ by \eqref{BHK 2.3}. Combining these claims with Lemma \ref{other height} then gives
\begin{equation}\label{ratio comparison}
\frac{d(x_{1},x_{2})}{d(x_{1},x_{3})} \asymp_{C(A)} \frac{d(\gamma_{1,1}(0))}{d(\gamma_{1,2}(0))} = \frac{d(\gamma_{1,1}(0))}{d(\gamma_{1,1}(s))}
\end{equation}

If $s \leq 0$ then we can apply the right side of inequality \eqref{desired} together with \eqref{ratio comparison} to the quasihyperbolic geodesic $\gamma_{1,1}$ to obtain that
\[
\frac{d(x_{1},x_{2})}{d(x_{1},x_{3})} \leq C\frac{d(\gamma_{1,1}(0))}{d(\gamma_{1,1}(s))} \leq C(e^{s})^{u},
\]
with $C = C(A) \geq 1$ and $0 < u = u(A) \leq 1$. If $s \geq 0$ then we apply the left side of inequality \eqref{desired} and invert the results to get that
\[
\frac{d(x_{1},x_{2})}{d(x_{1},x_{3})} \leq C\frac{d(\gamma_{1,1}(0))}{d(\gamma_{1,1}(s))} \leq Ce^{s},
\]
with $C = C(A)$. Setting $\beta(t) = C\max\{t,t^{u}\}$ and combining the cases $s\leq 0$ and $s \geq 0$ together gives the conclusion of the lemma. 
\end{proof}

When $\Omega$ is bounded it is shown in the final assertion of \cite[Theorem 3.6]{BHK} that the metric $d$ on $\p \Omega$ is quasisymmetrically equivalent to any visual metric $\theta$ on the Gromov boundary $\p Y$ of $Y$ that is based at some point $x \in Y$. We will show that the analogous claim holds in the unbounded case when we consider visual metrics based at a Busemann function $b$ on $Y$ that is itself based at the distinguished point $\omega \in \p Y$. We refer back to Section \ref{subsec:visual} for definitions regarding visual metrics. 

\begin{prop}\label{quasisymmetric boundary}
Let $\theta$ be a visual metric with parameter $\e > 0$ on $\p_{\omega}Y$ based at a Busemann function $b$ with basepoint $\omega$. Then the identification $(\p_{\omega} Y,\theta) \rightarrow (\p \Omega,d)$ is $\eta$-quasisymmetric with $\eta(t) = C\max\{t^{\e^{-1}},t^{\e^{-1}u}\}$, where $C = C(A,\e) \geq 1$ and $0 < u = u(A) \leq 1$ is the constant of Lemma \ref{height control}. 
\end{prop}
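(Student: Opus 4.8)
The plan is to reduce the quasisymmetry estimate to Lemma~\ref{stepping stone quasisymmetric} via the definition of a visual metric. Fix three distinct points $x_{1},x_{2},x_{3}\in\p_{\omega}Y$; under the identification of $\p_{\omega}Y$ with $\p\Omega\subset\bar{\Omega}$ described at the start of this section these are also three distinct points of $\bar{\Omega}$. To establish the $\eta$-quasisymmetry condition we must bound the ratio $d(x_{1},x_{2})/d(x_{1},x_{3})$ by $\eta(t)$ whenever $\theta(x_{1},x_{2})\le t\,\theta(x_{1},x_{3})$.

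First I would reproduce the geometric setup of Lemma~\ref{stepping stone quasisymmetric}: form geodesic triangles $\Delta_{1}=\omega x_{1}x_{2}$ and $\Delta_{2}=\omega x_{1}x_{3}$ sharing the edge $\omega x_{1}$, fix associated tripod maps, let $\gamma_{1,1}$ and $\gamma_{1,2}$ be the parametrizations of $\omega x_{1}$ produced by Proposition~\ref{compute consequence} applied to $\Delta_{1}$ and $\Delta_{2}$ respectively, and let $s$ be defined by $\gamma_{1,1}(s)=\gamma_{1,2}(0)$. The crucial step is to recognize $s$ as a difference of Gromov products: property~(4) of Proposition~\ref{compute consequence} gives $b(\gamma_{1,1}(t))\doteq_{c(A)}t+(x_{1}|x_{2})_{b}$ and $b(\gamma_{1,2}(t))\doteq_{c(A)}t+(x_{1}|x_{3})_{b}$, and evaluating these at the common point $\gamma_{1,1}(s)=\gamma_{1,2}(0)$ yields
\[
s\doteq_{c(A)}(x_{1}|x_{3})_{b}-(x_{1}|x_{2})_{b},\qquad\text{so}\qquad e^{s}\asymp_{C(A)}e^{(x_{1}|x_{3})_{b}-(x_{1}|x_{2})_{b}}.
\]

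Next I would invoke the visual metric. Since $\theta$ is $4$-biLipschitz to $\theta_{\e,b}(\xi,\zeta)=e^{-\e(\xi|\zeta)_{b}}$ (Section~\ref{subsec:visual}), the ratio $\theta(x_{1},x_{2})/\theta(x_{1},x_{3})$ is comparable with constant $16$ to $e^{\e((x_{1}|x_{3})_{b}-(x_{1}|x_{2})_{b})}$, hence
\[
e^{s}\asymp_{C(A,\e)}\left(\frac{\theta(x_{1},x_{2})}{\theta(x_{1},x_{3})}\right)^{\e^{-1}}.
\]
Consequently $\theta(x_{1},x_{2})\le t\,\theta(x_{1},x_{3})$ forces $e^{s}\le C(A,\e)\,t^{\e^{-1}}$. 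Feeding this into Lemma~\ref{stepping stone quasisymmetric}, whose conclusion $d(x_{1},x_{2})/d(x_{1},x_{3})\le\beta(e^{s})$ with $\beta(r)=C(A)\max\{r,r^{u}\}$ uses the same constant $u=u(A)$ of Lemma~\ref{height control}, and using that $\beta$ is increasing together with $C(A,\e)\ge 1$ and $0<u\le 1$ to absorb constants, I obtain $d(x_{1},x_{2})/d(x_{1},x_{3})\le C(A,\e)\max\{t^{\e^{-1}},t^{\e^{-1}u}\}$. Setting $\eta(t)=C(A,\e)\max\{t^{\e^{-1}},t^{\e^{-1}u}\}$ — which equals $C\,t^{\e^{-1}u}$ on $[0,1]$ and $C\,t^{\e^{-1}}$ on $[1,\infty)$ and is therefore a strictly increasing homeomorphism of $[0,\infty)$ — finishes the argument.

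I do not anticipate a real obstacle: the content is entirely contained in Lemmas~\ref{height control}, \ref{height transition Busemann}, and \ref{stepping stone quasisymmetric}, and what remains is the translation from the displacement $s$ along the shared geodesic $\omega x_{1}$ to the ratio of visual distances. The only point demanding care is the identification of $s$ with $(x_{1}|x_{3})_{b}-(x_{1}|x_{2})_{b}$: one must note that $\gamma_{1,1}$ and $\gamma_{1,2}$, although arising from different tripod maps, are genuine reparametrizations (translates) of one and the same geodesic line, so that property~(4) of Proposition~\ref{compute consequence} may be applied to each and then compared at the shared point.
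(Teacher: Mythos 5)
Your proposal is correct and follows essentially the same route as the paper's proof: set up the two triangles $\Delta_1,\Delta_2$ sharing $\omega x_1$, identify the shift $s$ with $(x_1|x_3)_b-(x_1|x_2)_b$ via property (4) of Proposition~\ref{compute consequence}, translate to the visual metric ratio, and feed the result into Lemma~\ref{stepping stone quasisymmetric}. Your closing remark about $\gamma_{1,1}$ and $\gamma_{1,2}$ being translates of the same arclength-parametrized geodesic makes explicit a point the paper leaves implicit, but the argument is otherwise the same.
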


\begin{proof}
To ease notation in this proof we will write $\asymp$ and $\doteq$ for $\asymp_{C}$ and $\doteq_{c}$ where the implied constant depends only on $A$. We can assume that $\p \Omega$ has at least three distinct points, as otherwise the claim is vacuously true. We must show that for any three distinct points $\xi_{1}$, $\xi_{2}$, $\xi_{3} \in \p_{\omega} Y$ we have
\[
\frac{d(\xi_{1},\xi_{2})}{d(\xi_{1},\xi_{3})}\leq \eta\left(\frac{\theta(\xi_{1},\xi_{2})}{\theta(\xi_{1},\xi_{3})}\right),
\]
with the control function $\eta$ having the desired form. Since $\theta$ is a visual metric with parameter $\e$ based at $b$, we have for $\xi,\zeta \in \p_{\omega}Y$ that
\[
\theta(\xi,\zeta) \asymp e^{-\e (\xi|\zeta)_{b}},
\]
with implied constant independent of $\e$ and $A$. Thus it suffices to find a control function $\eta$ of the desired form such that for any $\xi_{1}$, $\xi_{2}$, $\xi_{3} \in \p_{\omega} Y$ we have
\begin{equation}\label{original control}
\frac{d(\xi_{1},\xi_{2})}{d(\xi_{1},\xi_{3})} \leq \eta(e^{-\e((\xi_{1}|\xi_{2})_{b}-(\xi_{1}|\xi_{3})_{b})}).
\end{equation}
In fact we need only find a control function $\eta_{0}$ such that for any $\xi_{1}$, $\xi_{2}$, $\xi_{3} \in \p_{\omega} Y$ we have
\begin{equation}\label{modified control}
\frac{d(\xi_{1},\xi_{2})}{d(\xi_{1},\xi_{3})} \leq \eta_{0}(e^{-((\xi_{1}|\xi_{2})_{b}-(\xi_{1}|\xi_{3})_{b})}),
\end{equation}
as then we can set $\eta(t) = \eta_{0}(t^{\e^{-1}})$. Thus it suffices to establish the inequality \eqref{modified control} with $\eta_{0}(t) = C\max\{t,t^{u}\}$ where $C$ depends only on $A$ and $u$ is the constant of Lemma \ref{height control}.

As in the setup of Lemma \ref{stepping stone quasisymmetric}, we let $\Delta_{1} = \omega \xi_{1}\xi_{2}$ and $\Delta_{2} = \omega \xi_{1}\xi_{3}$ be geodesic triangles that share the edge $\omega \xi_{1}$ and let $T_{i}: \Delta \rightarrow \Upsilon$ be associated tripod maps. Let $\gamma_{1,i}$ be the parametrizations of $\omega \xi_{1}$ given by applying Proposition \ref{compute consequence} to $\Delta_{i}$, $i = 1,2$, and define $s$ such that $\gamma_{1,1}(s) = \gamma_{1,2}(0)$. Let $\gamma_{2}$ and $\sigma_{1}$ be the parametrizations of $\omega \xi_{2}$, and $\xi_{1}\xi_{2}$ supplied by applying Proposition \ref{compute consequence} to $\Delta_{1}$, and let $\gamma_{3}$ and $\sigma_{2}$ be the parametrizations of $\omega \xi_{3}$, and $\xi_{1}\xi_{3}$ given by applying Proposition \ref{compute consequence} to $\Delta_{2}$.

By (4) of Proposition \ref{compute consequence} we have that $b(\gamma_{1,1}(0)) \doteq (\xi_{1}|\xi_{2})_{b}$ and $b(\gamma_{1,2}(0)) \doteq (\xi_{1}|\xi_{3})_{b}$  (recall that $\delta = \delta(A)$ so that $c(\delta) = c(A)$). Since $\gamma_{1,1}(s) = \gamma_{1,2}(0)$, (4) of Proposition \ref{compute consequence} gives  $b(\gamma_{1,2}(0)) \doteq s + (\xi_{1}|\xi_{2})_{b}$ and therefore 
\[
e^{-((\xi_{1}|\xi_{2})_{b}-(\xi_{1}|\xi_{3})_{b})} \asymp e^{s}.
\]
By Lemma \ref{stepping stone quasisymmetric} we thus conclude that
\[
\frac{d(\xi_{1},\xi_{2})}{d(\xi_{1},\xi_{3})} \leq \beta(e^{-((\xi_{1}|\xi_{2})_{b}-(\xi_{1}|\xi_{3})_{b})}),
\]
with $\beta(t) = C\max\{t,t^{u}\}$ for $t \geq 0$, with $C= C(A) \geq 1$ and $u$ being the constant of Lemma \ref{height control}. This implies that inequality \eqref{modified control} holds with $\eta_{0}(t) = C\beta(t)$ for $t \geq 0$ for an appropriate constant $C = C(A)$, which implies the proposition. 
\end{proof}

Since any visual metric on $\p Y$ based at a Busemann function $b$ is quasim\"obius to any visual metric on $\p Y$ based at a point $x \in Y$ (as discussed at the end of Section \ref{subsec:visual}), Proposition \ref{quasisymmetric boundary} gives a new proof of a result of Herron, Shanmugalingam, and Xie \cite[Theorem 6.2]{HSX08} that does not require sphericalizing $\Omega$. Recently Zhou has shown that the quasisymmetry claim of Proposition \ref{quasisymmetric boundary} holds without the assumption that $\Omega$ is locally compact \cite[Theorem 1.2]{ZZ20}. 

\section{Uniformizing the quasihyperbolic metric}\label{sec:uniformize quasihyperbolic}
 
Let $(\Omega,d)$ be an $A$-uniform metric space. As in the previous section we let $Y = (\Omega,k)$ be the quasihyperbolization of $\Omega$. We write $d(x):=d_{\Omega}(x)$ for $x \in \Omega$ as in the previous section. If $\Omega$ is bounded then we let $\omega \in \Omega$ be such that $d(\omega) = \sup_{x \in \Omega} d(x)$, and if $\Omega$ is unbounded then we let $\omega \in \p Y$ denote the point corresponding to the equivalence class of all quasihyperbolic geodesic rays $\gamma$ in $Y$ with $\l_{d}(\gamma) = \infty$. When $\Omega$ is bounded we define $b(x) = k(\omega,z)$ and when $\Omega$ is unbounded we let $b$ be a Busemann function based at $\omega \in \p Y$. Proposition \ref{uniform starlike} shows that $Y$ is $K$-roughly starlike from $\omega$ with $K = K(A)$. We will assume $\e > 0$ is given such that $\rho_{\e,b}$ is a GH-density on $Y$ with constant $M$. Since $Y$ is $\delta$-hyperbolic with $\delta = \delta(A)$ and $b$ is $1$-Lipschitz, by Theorem \ref{Gehring-Hayman} there is always an $\e_{0} = \e_{0}(A)$ and such that $\rho_{\e,b}$ is a GH-density with constant $M = 20$ for any $0 < \e \leq \e_{0}$. We write $Y_{\e} = Y_{\e,b}$ for the conformal deformation of $Y$ with conformal factor $\rho_{\e} = \rho_{\e,b}$. Similarly we drop $b$ from the notation and write $d_{\e} = d_{\e,b}$ for the metric on $Y_{\e}$, etc. We write $B_{\e}(x,r) = B_{d_{\e}}(x,r)$ for the ball of radius $r$ centered at $x \in Y_{\e}$ in the metric $d_{\e}$. The notation $(x|y)_{b}$ always indicates the Gromov product in $Y$ of $x$ and $y$ based at $b$. 

We then have the following proposition that generalizes \cite[Proposition 4.28]{BHK}.

\begin{prop}\label{uniformize quasi}
Let $\e > 0$ be such that $\rho_{\e}$ is a GH-density for $Y$ with constant $M$. Then the map $\Omega \rightarrow Y_{\e}$ induced by the identity map on $\Omega$ is $\p$-biLipschitz with data $(L,\la)$ and is $\eta$-quasisymmetric with $L$, $\la$, and $\theta$ depending only on $A$, $\e$, and $M$.
\end{prop}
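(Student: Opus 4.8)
The plan is to first establish the $\p$-biLipschitz property by a direct local computation, then deduce that $\id\colon\Omega\to Y_\e$ is quasim\"obius via Theorem \ref{homothety to mobius}, and finally upgrade this to quasisymmetry by a global estimate modeled on the proof of Proposition \ref{quasisymmetric boundary}. Throughout we use that $Y$ is $\delta$-hyperbolic with $\delta=\delta(A)$ and $K$-roughly starlike from $\omega$ with $K=K(A)$ by Proposition \ref{uniform starlike}, so that the constants coming from Lemmas \ref{lem:estimate both} and \ref{compute distance} depend only on $A$, $\e$, and $M$. When $\Omega$ is bounded the statement is essentially \cite[Proposition 4.28]{BHK}, the mild generalization in the choice of basepoint being handled by attaching a ray to $Y$ at $\omega$ as in \cite[Remark 4.24]{Bu20}; so we concentrate on the case that $\Omega$ is unbounded, where $b$ is a Busemann function based at $\omega\in\p Y$.

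\textbf{The $\p$-biLipschitz bound.} The two inputs here are $d_{Y_\e}(x)\asymp_C e^{-\e b(x)}$ (Lemma \ref{compute distance}) and $d_\e(y,z)\asymp_C e^{-\e(y|z)_b}\min\{1,k(y,z)\}$ (Lemma \ref{lem:estimate both}), with $C=C(A,\e,M)$. Fix $x\in\Omega$ and $y,z\in B_d(x,\la d_\Omega(x))$ for a small constant $\la$ to be chosen. By \eqref{lip boundary distance} we have $d_\Omega(y),d_\Omega(z)\asymp_{1/(1-\la)}d_\Omega(x)$, so by \eqref{hyperbolize comparison} together with $\log(1+t)\asymp t$ for bounded $t$ we get $k(y,z)\asymp_{C(A)}d(y,z)/d_\Omega(x)$, a quantity which is $<1$ once $\la$ is small, and \eqref{BHK 2.3} gives $b(y),b(z)\doteq_{c(A)}b(x)$; hence $(y|z)_b=\frac{1}{2}(b(y)+b(z)-k(y,z))\doteq_{c(A)}b(x)-\frac{1}{2}k(y,z)$. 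Feeding these into Lemmas \ref{lem:estimate both} and \ref{compute distance} yields $d_\e(y,z)\asymp_{C(A,\e)}d_{Y_\e}(x)\,k(y,z)\asymp_{C(A)}d_{Y_\e}(x)\,d(y,z)/d_\Omega(x)$, which is the comparison \eqref{controlled comparison} and in particular makes $\id\colon\Omega\to Y_\e$ $\p$-Lipschitz. For the reverse map one runs the same chain after checking that $y,z\in B_{d_\e}(x,\la d_\e(x))$ forces $k(y,z)$, $k(x,y)$, $k(x,z)$ to be small: this follows from the lower bound in Lemma \ref{lem:estimate both} and Lemma \ref{compute distance}, since $e^{-\e(y|z)_b}\ge c(A,\e)\,e^{-\e b(x)}\asymp c\,d_\e(x)$ forces $\min\{1,k(y,z)\}\le C\la$ with $C=C(A,\e,M)$. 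Choosing $\la$ small enough depending on $A,\e,M$ makes both maps $\p$-Lipschitz with a common constant, so $\id$ is $\p$-biLipschitz with data $(L,\la)$.

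\textbf{Quasisymmetry.} Theorem \ref{homothety to mobius} already gives that $\id\colon\Omega\to Y_\e$ is $\eta_0$-quasim\"obius with $\eta_0=\eta_0(A)$; the point is to promote this to a quasisymmetry with quantitative control, which I would do directly, following the template of Proposition \ref{quasisymmetric boundary}. Fix distinct $x,y,z\in\Omega\subset\bar\Omega$. If $y,z$ both lie in the ball $B_d(x,\la d_\Omega(x))$ from the previous step, the local comparison already gives $d_\e(x,y)/d_\e(x,z)\asymp d(x,y)/d(x,z)$; so assume otherwise. Using quasihyperbolic geodesics, Lemma \ref{other height} gives $d(x,y)\asymp_{C(A)}d_\Omega(p_{xy})$ with $p_{xy}$ a point maximizing $d_\Omega$ on the geodesic $xy$, while Lemma \ref{height transition Busemann} identifies $p_{xy}$ up to bounded $k$-distance with the minimizer of $b$ on $xy$ and gives $b(p_{xy})\doteq_{c(A)}(x|y)_b$; combined with Lemmas \ref{compute distance} and \ref{lem:estimate both} this yields the global estimate $d_\e(x,y)\asymp_{C(A,\e,M)}d_{Y_\e}(p_{xy})\min\{1,k(x,y)\}$ and its analogue for the pair $x,z$. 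Now form geodesic triangles $\Delta_1=\omega xy$ and $\Delta_2=\omega xz$ sharing the edge $\omega x$, with associated tripod maps, and let $s$ be the split parameter $\gamma_{1,1}(s)=\gamma_{1,2}(0)$ as in Lemma \ref{stepping stone quasisymmetric}; Proposition \ref{compute consequence} part (4) gives $(x|z)_b-(x|y)_b\doteq_{c(A)}s$. Lemma \ref{stepping stone quasisymmetric} (valid for points of $\bar\Omega$) gives $d(x,y)/d(x,z)\le\beta(e^{s})$, and the same lemma with $y,z$ interchanged gives $d(x,z)/d(x,y)\le\beta(e^{-s})$, so $e^{s}$ is pinned down by $t:=d(x,y)/d(x,z)$ up to a multiplicative constant and raising to a power in $[u,u^{-1}]$, where $u=u(A)$ is the constant of Lemma \ref{height control}. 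Finally, Proposition \ref{compute consequence} part (4) shows $b$ decreases with exact rate $\e$ along geodesics emanating from $\omega$, so $d_{Y_\e}(p_{xy})/d_{Y_\e}(p_{xz})\asymp e^{-\e(b(p_{xy})-b(p_{xz}))}\asymp e^{\e s}$; plugging this and the correction factor $\min\{1,k(x,y)\}/\min\{1,k(x,z)\}$ into the global estimate—handling the correction factor via the $\p$-biLipschitz comparison of the previous paragraph whenever the relevant quasihyperbolic distance is small—bounds $d_\e(x,y)/d_\e(x,z)$ by a function of $e^{s}$, hence of $t$, of the form $C\max\{t^{a},t^{b}\}$ with $C,a,b$ depending only on $A,\e,M$. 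This is the desired quasisymmetry control function.

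\textbf{Main obstacle.} The delicate part is the quasisymmetry upgrade, and within it the bookkeeping of the near-diagonal correction factor $\min\{1,k(x,y)\}$ from Lemma \ref{lem:estimate both}: these factors must be tracked through the estimates and the global argument must be patched cleanly onto the local $\p$-biLipschitz regime (the case where two of the three points lie in a controlled ball about the third). A secondary point requiring attention is that the auxiliary results of Section \ref{sec:quasihyperbolize} used above (Lemmas \ref{height control}, \ref{height transition Busemann}, \ref{stepping stone quasisymmetric} and Proposition \ref{compute consequence}) were stated under the running hypothesis that $\Omega$ is unbounded; for bounded $\Omega$ one either invokes \cite[Proposition 4.28]{BHK} directly or transfers the needed statements from the unbounded case by attaching a ray to $Y$ at $\omega$, as indicated above.
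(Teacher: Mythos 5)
Your proof of the $\p$-biLipschitz property follows essentially the same path as the paper's, using Lemmas \ref{lem:estimate both} and \ref{compute distance} together with \eqref{hyperbolize comparison} and the elementary logarithm inequalities. One small difference: the paper proves only the forward $\p$-Lipschitz estimate together with the inclusion $B_{d_\e}(x,L^{-1}\la d_\e(x)) \subset B_d(x,\la d_\Omega(x))$ and then invokes Lemma \ref{weak to normal}, whereas you argue the reverse direction by running the chain backward. Your verification that $y,z\in B_{d_\e}(x,\la d_\e(x))$ forces $k(x,y)$, $k(x,z)$, $k(y,z)$ small is slightly out of order — you need $(x|y)_b\le b(x)$ (which holds unconditionally since $(x|y)_b=\tfrac12(b(x)+b(y)-k(x,y))\le b(x)$ because $b$ is $1$-Lipschitz) applied first to $x,y$ and $x,z$ to get those quasihyperbolic distances small, and only then does $(y|z)_b\doteq_c b(x)$ follow — but this is a fixable bookkeeping issue, and the approach matches Lemma \ref{weak to normal} in spirit.

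The quasisymmetry step is where you genuinely diverge, and where there is a gap. The paper invokes V\"ais\"al\"a's result \cite[Theorem 6.6]{V99} — for a homeomorphism between uniform spaces, weak quasisymmetry implies quasisymmetry quantitatively — which reduces the task to a one-sided implication: $d(x_1,x_2)\le d(x_1,x_3)\Rightarrow d_\e(x_1,x_2)\le Cd_\e(x_1,x_3)$. This means the paper only needs to show that the split parameter $s$ is bounded \emph{above} under the hypothesis $d(x_1,x_2)\le d(x_1,x_3)$, which is accomplished in a few lines via Lemmas \ref{other height}, \ref{height transition Busemann}, and \ref{height control}. You, by contrast, attempt to prove the full two-sided quasisymmetry estimate directly, which requires simultaneously controlling $e^s$ from both sides by $d(x,y)/d(x,z)$ and then tracking the correction factor $\min\{1,k(x,y)\}/\min\{1,k(x,z)\}$ through Lemma \ref{lem:estimate both}. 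You acknowledge that "handling the correction factor $\ldots$ whenever the relevant quasihyperbolic distance is small" is the delicate part, but as written this is the gap: when one of $k(x,y)$, $k(x,z)$ is small and the other large the two formulas for $d_\e$ live in different regimes, and patching them onto the local $\p$-biLipschitz estimate requires a genuine case analysis that the outline does not supply. The reduction to weak quasisymmetry is precisely the device that sidesteps this bookkeeping, and it is worth knowing as a standard tool in this context.

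Separately, note that invoking Theorem \ref{homothety to mobius} to first get a quasim\"obius bound is circular: the paper's proof of Theorem \ref{homothety to mobius} relies on Proposition \ref{uniformize quasi} (precisely the quasisymmetry of the maps $\Omega\to Y_{\e,b}$ and $\Omega'\to Y'_{\e,b'}$) to pass between the uniform spaces and the uniformizations of their quasihyperbolizations. Your argument does not actually lean on that invocation — you go on to prove quasisymmetry from scratch — so this does not sink the proof, but the remark as stated presupposes what is being proved.
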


Proposition \ref{uniformize quasi} gives a direct quantitative relation between $\Omega$ and a uniformization of its quasihyperbolization $Y$ that we will make use of in the proofs of the  main theorems in this paper. As remarked above, by Theorem \ref{Gehring-Hayman} we will always be able to apply Proposition \ref{uniformize quasi} for $\e$ sufficiently small with $M = 20$. Thus there is always a uniformization of $Y$ to which this proposition can be applied.  We will focus primarily on the case that $\Omega$ is unbounded. We will deduce the bounded case from this by attaching a ray to $\Omega$.  

\begin{rem}\label{appendix interlude} Throughout the remainder of this paper we will be using \cite[Proposition A.7]{BHK}, which for a geodesic metric space $X$ and a continuous function $\rho:X \rightarrow (0,\infty)$  allows us to compute the lengths $\l_{\rho}(\gamma)$ in the conformal deformation $X_{\rho}$ of curves $\gamma: I \rightarrow X$ parametrized by arclength in $X$ as
\begin{equation}\label{appendix prop}
\l_{\rho}(\gamma) = \int_{I}\rho \circ \gamma \, ds,
\end{equation}
with $ds$ denoting the standard length element in $\R$.
\end{rem}

We will use the following lemma for verifying that a  map is $\p$-biLipschitz. 

\begin{lem}\label{weak to normal}
Let $f: (\Omega,d) \rightarrow (\Omega',d')$ be a homeomorphism of incomplete metric spaces. Suppose that there is $L \geq 1$ and $0 < \la < 1$ such that for any $x \in \Omega$ and $y,z \in B_{d}(x,\la d_{\Omega}(x))$, 
\begin{equation}\label{later controlled comparison}
\frac{d'(f(y),f(z))}{d_{\Omega'}'(f(x))} \asymp_{L} \frac{d(y,z)}{d_{\Omega}(x)},
\end{equation}
and that for all $x \in \Omega$ we have
\begin{equation}\label{controlled inclusion}
B_{d'}(f(x),L^{-1}\la d_{\Omega'}(f(x))) \subseteq f(B_{d}(x,\la d_{\Omega}(x))). 
\end{equation}
Then $f$ is $\p$-biLipschitz with data $(L,L^{-1}\la)$. 
\end{lem}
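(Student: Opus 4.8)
The plan is to unwind Definition \ref{def:controlled} and check each requirement for $f$ to be $\p$-biLipschitz with data $(L,\mu)$, where I write $\mu = L^{-1}\la$ for brevity. First I would record that $\mu$ is an admissible parameter: since $L \geq 1$ and $0 < \la < 1$ we have $0 < \mu \leq \la < 1$, so in particular $B_{d}(x,\mu d_{\Omega}(x)) \subseteq B_{d}(x,\la d_{\Omega}(x))$ for every $x \in \Omega$, and likewise in $\Omega'$. The hypotheses already supply that $f$ is a homeomorphism and that $L \geq 1$, so it remains only to verify that $f$ and $f^{-1}$ are each $\p$-Lipschitz with data $(L,\mu)$.

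The estimate for $f$ itself is essentially immediate: given $x \in \Omega$ and $y,z \in B_{d}(x,\mu d_{\Omega}(x))$, the containment $B_{d}(x,\mu d_{\Omega}(x)) \subseteq B_{d}(x,\la d_{\Omega}(x))$ lets us apply the upper-bound half of \eqref{later controlled comparison} to the triple $x,y,z$, which is precisely inequality \eqref{controlled inequality} for $f$ with constant $L$. The substantive step is the estimate for $f^{-1}$, and this is exactly where the inclusion \eqref{controlled inclusion} enters. Fix $x' \in \Omega'$ and put $x = f^{-1}(x')$; take $y',z' \in B_{d'}(x',\mu d_{\Omega'}'(x')) = B_{d'}(f(x),L^{-1}\la d_{\Omega'}'(f(x)))$. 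By \eqref{controlled inclusion} there are $y,z \in B_{d}(x,\la d_{\Omega}(x))$ with $f(y) = y'$ and $f(z) = z'$, and then the lower-bound half of \eqref{later controlled comparison} applied to $x,y,z$ gives
\[
L^{-1}\frac{d(y,z)}{d_{\Omega}(x)} \leq \frac{d'(f(y),f(z))}{d_{\Omega'}'(f(x))} = \frac{d'(y',z')}{d_{\Omega'}'(x')}.
\]
Rearranging and substituting $y = f^{-1}(y')$, $z = f^{-1}(z')$, $x = f^{-1}(x')$ yields inequality \eqref{controlled inequality} for $f^{-1}$ with data $(L,\mu)$, which completes the argument.

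Because the proof is this direct, I do not anticipate a genuine obstacle; the one point that warrants care is simply that \eqref{controlled inclusion} is exactly what guarantees the preimages $f^{-1}(y')$ and $f^{-1}(z')$ lie in the ball $B_{d}(x,\la d_{\Omega}(x))$ on which the two-sided comparison \eqref{later controlled comparison} is valid. Without such an inclusion one would know only that $f^{-1}$ carries some neighborhood of $f(x)$ into $\Omega$, with no control on its size relative to $d_{\Omega'}'(f(x))$, and the reversed ($\p$-Lipschitz for $f^{-1}$) inequality could then fail.
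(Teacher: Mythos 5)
Your proof is correct and follows essentially the same route as the paper: the forward estimate is immediate from the upper half of \eqref{later controlled comparison}, and the inclusion \eqref{controlled inclusion} is used to guarantee that preimages of points in $B_{d'}(f(x),L^{-1}\la d_{\Omega'}'(f(x)))$ land inside $B_{d}(x,\la d_{\Omega}(x))$, where the lower half of \eqref{later controlled comparison} can be applied and rearranged. The only cosmetic difference is that the paper records $f$ as $\p$-Lipschitz with data $(L,\la)$ rather than shrinking to $(L,L^{-1}\la)$ up front, which is of course harmless since a smaller ball inherits the estimate.
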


\begin{proof}
Inequality \eqref{later controlled comparison} clearly implies that $f$ is $\p$-Lipschitz with data $(L,\la)$. Thus we must show that $f^{-1}$ is $\p$-Lipschitz with data $(L,L^{-1}\la)$. Let $w \in \Omega'$ be given. Applying \eqref{controlled inclusion} to $x = f^{-1}(w)$ and then applying $f^{-1}$ to each side gives
\[
f^{-1}(B_{d'}(w,L^{-1}\la d_{\Omega'}(w))) \subseteq B_{d}(x,\la d_{\Omega}(x))
\]
Thus if $y,z \in B_{d'}(w,L^{-1}\la d_{\Omega'}(w))$ then $f^{-1}(y),f^{-1}(z) \in B_{d}(x,\la d_{\Omega}(x))$. We can thus apply \eqref{later controlled comparison} to obtain 
\[
\frac{d'(y,z)}{d_{\Omega'}'(w)} \asymp_{L} \frac{d(f^{-1}(y),f^{-1}(z))}{d_{\Omega}(f^{-1}(w))}.
\]
This implies in particular that $f^{-1}$ is $\p$-Lipschitz with data $(L,L^{-1}\la)$.
\end{proof}

We can now prove Proposition \ref{uniformize quasi}. 

\begin{proof}[Proof of Proposition \ref{uniformize quasi}]
We will first consider the case that $\Omega$ is unbounded, so that $\omega \in \p Y$ and $b$ is a Busemann function based at $\omega$. We recall that $Y$ is $\delta$-hyperbolic with $\delta = \delta(A)$ and $K$-roughly starlike from $\omega$ with $K = K(A)$. We will first verify that the identity map $\Omega \rightarrow Y_{\e}$ is $\p$-biLipschitz. We begin by noting that there is $\la = \la(A) \in (0,1)$ such that $k(y,z) \leq 1$ for $y,z \in B_{d}(x,\la d(x))$. This is because $d(y,z) \leq 2\la d(x)$ and
\[
\min\{d(y),d(z)\} \geq (1-\la)d(x),
\]
and therefore by \eqref{hyperbolize comparison},
\[
k(y,z) \leq C(A) \log \left(1+ \frac{2\la}{1-\la}\right). 
\]
Thus if $\la$ is sufficiently small, dependent only on $A$, we will have $k(y,z) \leq 1$. We apply inequality \eqref{estimate both} to obtain from this that for $y,z \in B_{d}(x,\la d(x))$,
\begin{equation}\label{help quasi}
d_{\e}(y,z) \asymp_{C} \rho_{\e}(x)k(y,z) \asymp_{C} d_{\e}(x)k(y,z),
\end{equation}
with $C = C(A,\e,M)$, with the second comparison following from Proposition  \ref{compute distance}; we are using here that $(y|z)_{b} \doteq_{2} b(x)$ since $k(x,y) \leq 1$ and $k(x,z) \leq 1$ by our choice of $\la$. 

In this next part we will use the following inequality,
\begin{equation}\label{strange inequality}
a \log(1+t) \geq t \log (1+a),
\end{equation}
valid for $0 \leq t \leq a$ (see \cite[(4.31)]{BHK}), as well as the inequality $\log(1+t)\leq t$ for $t \geq 0$. Applying \eqref{hyperbolize comparison} again to $y,z \in B_{d}(x,\la d(x))$, 
\begin{align*}
k(y,z) &\leq C(A) \log \left(1+\frac{d(y,z)}{(1-\la)d(x)}\right) \\
&\leq C(A)\frac{d(y,z)}{(1-\la)d(x)},
\end{align*}
and by \eqref{hyperbolize comparison} together with \eqref{strange inequality},
\begin{align*}
k(y,z) &\geq \log \left(1+\frac{d(y,z)}{(1+\la)d(x)}\right) \\
&\geq \frac{\log(1+a)}{a}\frac{d(y,z)}{(1+\la)d(x)},
\end{align*}
with 
\[
a = a(A) := \frac{2\la}{1+\la} \geq \frac{d(y,z)}{(1+\la)d(x)}.
\]
We conclude that 
\[
k(y,z)\asymp_{C} \frac{d(y,z)}{d(x)},
\]
with $C = C(A)$. Combining this with \eqref{help quasi} gives
\begin{equation}\label{part controlled}
\frac{d_{\e}(y,z)}{d_{\e}(x)} \asymp_{L} \frac{d(y,z)}{d(x)},
\end{equation}
with $L = L(A,\e,M)$. Putting $z = x$ in \eqref{part controlled} shows that if $y \in B_{d_{\e}}(x,L^{-1}\la d_{\e}(x))$ then $y \in B_{\Omega}(x,\la d(x))$. It then follows from Lemma \ref{weak to normal} that the identity map $\Omega \rightarrow Y_{\e}$ is $\p$-biLipschitz with data $(L,L^{-1}\la)$ depending only on $A$, $\e$, and $M$.  

It remains to show that the identity map $\Omega \rightarrow Y_{\e}$ is $\eta$-quasisymmetric with $\eta$ depending only on $A$, $\e$, and $M$. Since the metric spaces $\Omega$ and $Y_{\e}$ are both uniform, by \cite[Theorem 6.6]{V99} it suffices to show that there is some $C = C(A,\e,M)$ such that, for $x_{1},x_{2},x_{3}\in \Omega$, 
\[
d(x_{1},x_{2}) \leq d(x_{1},x_{3}) \; \Rightarrow \; d_{\e}(x_{1},x_{2}) \leq Cd_{\e}(x_{1},x_{3}).
\]  


Recalling that $\omega \in \p Y$ is the basepoint of the Busemann function $b$, similarly to the proof of Proposition \ref{quasisymmetric boundary}  we form geodesic triangles $\Delta_{1} = x_{1}x_{2}\omega$ and $\Delta_{2}=x_{1}x_{3}\omega$ sharing $x_{1}\omega$ as a common edge and let $T_{i}: \Delta_{i} \rightarrow \Upsilon$ be associated tripod maps. We let $\gamma_{1,i}$ be the parametrizations of $\omega x_{1}$ given by applying Proposition \ref{compute consequence} to $\Delta_{i}$, $i =1,2$, and define $s$ such that $\gamma_{1,1}(s) = \gamma_{1,2}(0)$. We conclude by Lemma \ref{stepping stone quasisymmetric} that
\[
\frac{d(x_{1},x_{2})}{d(x_{1},x_{3})} \leq \beta(e^{s}), 
\]
with $\beta(t) = C(A)\max\{t,t^{u}\}$ for $t \geq 0$, $0 < u = u(A) \leq 1$. Thus it suffices to find a constant $c = c(A,\e,M) \geq 0$ such that we always have $s \leq c$. Since this inequality is trivial when $s \leq 0$ we can assume that $s \geq 0$. 

Let $p_{1} \in x_{1}x_{2}$ and $p_{2} \in x_{1}x_{3}$ be such that $d(p_{1}) = \sup_{x \in x_{1}x_{2}}d(x_{1},x_{2})$ and $d(p_{2}) = \sup_{x \in x_{1}x_{3}}d(x_{1},x_{3})$. Lemma \ref{other height} implies that we have $d(p_{1}) \asymp_{C} d(x_{1},x_{2})$ and $d(p_{2}) \asymp_{C} d(x_{1},x_{3})$ with $C = C(A)$. The inequality $d(x_{1},x_{2}) \leq d(x_{1},x_{3})$ then implies that 
\[
d(p_{1}) \leq C(A)d(p_{2}).
\] 
On the other hand, Lemma \ref{height transition Busemann} implies that $k(p_{i},\gamma_{1,i}(0)) \leq c(A)$ for $i = 1,2$, which implies that $d(p_{i}) \asymp_{C} d(\gamma_{1,i}(0))$ for $i = 1,2$ with $C = C(A)$ by \eqref{BHK 2.3}. Thus we conclude that
\[
 \frac{d(\gamma_{1,2}(0))}{d(\gamma_{1,1}(0))} \geq C^{-1},
\]
with $C = C(A)$ depending only on $A$. Since $s \geq 0$ we can combine this with the right side inequality of Lemma \ref{height control} to conclude that, for $C = C(A) \geq 1$ and $u = u(A) > 0$,  
\[
C^{-1} \leq \frac{d(\gamma_{1,2}(0))}{d(\gamma_{1,1}(0))} \leq Ce^{-us}
\]
Rearranging this inequality gives $s \leq c$ with $c = c(A)$ depending only on $A$, as desired. This completes the proof of Proposition \ref{uniformize quasi} in the case that $\Omega$ is unbounded. 

For the case that $\Omega$ is bounded the corresponding uniformization $Y_{\e} = Y_{\e,b}$  is given by $b \in \mathcal{D}(Y)$ of the form $b(x) = k(x,\omega)$ for a point $\omega \in \Omega$ such that $d(\omega) = \sup_{x \in \Omega} d(x)$. We let $\t{\Omega} = \Omega \cup_{z \sim 0}[0,\infty)$ be the \emph{ray augmentation} of $\Omega$ based at $z \in \Omega$, given by attaching the half-line $[0,\infty)$ to $\Omega$ at $z$ (see \cite[Definition 4.18]{Bu20}). Then $\t{\Omega}$ is clearly also an $A$-uniform metric space with $\p \t{\Omega} = \p \Omega$. The quasihyperbolization $\t{Y}$ of $\t{\Omega}$ is isometric to the ray augmentation $Y \cup_{z \sim 0} [0,\infty)$ of the quasihyperbolization $Y$ of $\Omega$ based at $z$ by \cite[Lemma 4.20]{Bu20}. We let $\t{b}$ be the Busemann function associated to the ray $[0,\infty)$ we glued onto $Y$ and let $\t{\omega} \in \p \t{Y}$ be the basepoint of $\t{b}$. We let $\t{Y}_{\e}$ be the conformal deformation of $\t{Y}$ with conformal factor $\rho_{\e,\t{b}}$. 

As the discussion in \cite[Section 4.3]{Bu20} shows, we can then conclude that $\t{Y}$ is $\delta$-hyperbolic, that $\t{Y}$ is $K$-roughly starlike from $\t{\omega}$, and that  $\rho_{\e,\t{b}}$ is a GH-density for $\t{Y}$ with constant $M$. We can then apply the case $b \in \mathcal{B}(Y)$ that we established above to conclude that the map $\t{\Omega} \rightarrow \t{Y}_{\e}$ induced by the identity map on $\t{\Omega}$ is $\p$-biLipschitz with data $(L,\la)$ and is $\eta$-quasisymmetric, where $L$, $\la$, and $\eta$ depend only on $A$, $\e$ and $M$. Since the natural embeddings $\Omega \rightarrow \t{\Omega}$ and $Y_{\e} \rightarrow \t{Y}_{\e}$ are isometric by \cite[Lemma 4.20]{Bu20} and we have both $\p \Omega = \p \t{\Omega}$ and $\p Y_{\e} = \p \t{Y}_{\e}$, we conclude that the identity map $\Omega \rightarrow Y_{\e}$ is $\p$-biLipschitz with data $(L,\la)$ and is $\eta$-quasisymmetric, where $L$, $\la$, and $\eta$ depend only on $A$, $\e$ and $M$.  
\end{proof}

We are also able to obtain a variant of Proposition \ref{uniformize quasi} if we choose an arbitrary $b \in \hat{\mathcal{B}}(Y)$ as opposed to the specific choice we made in that proposition. The proposition below follows by combining together Proposition \ref{uniformize quasi}, Proposition \ref{uniform starlike} of the previous section, and Propositions \ref{quasimobius uniform} and \ref{composition true} from Section \ref{sec:main theorems}. 

\begin{prop}\label{extend uniformize quasi}
Let $\Omega$ be an $A$-uniform metric space such that $\p \Omega$ contains at least two points if $\Omega$ is bounded. Let $b \in \hat{\mathcal{B}}(Y)$ and $\e > 0$ be given such that $\rho_{\e,b}$ is a GH-density for $Y$ with constant $M$. Then the map $\Omega \rightarrow Y_{\e,b}$ induced by the identity map on $\Omega$ is $\p$-biLipschitz with data $(L,\la)$ and is $\eta$-quasim\"obius with $L$, $\la$, and $\eta$ depending only on $A$, $\e$, and $M$ (and additionally on $\phi(\Omega)$ if $\Omega$ is bounded). 
\end{prop}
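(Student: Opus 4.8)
The plan is to factor the identity map $\Omega\to Y_{\e,b}$ as a composition of two identity maps on the common underlying set, $\Omega\to Y_{\e_0,b_0}\to Y_{\e,b}$, where $b_0\in\hat{\mathcal B}(Y)$ is the \emph{distinguished} function from Proposition \ref{uniformize quasi} (a Busemann function based at the point $\omega\in\p Y$ represented by quasihyperbolic rays of infinite $d$-length when $\Omega$ is unbounded, and $b_0(x)=k(x,\omega)$ for a point $\omega\in\Omega$ with $d(\omega)=\sup_\Omega d$ when $\Omega$ is bounded), and $\e_0=\e_0(A)>0$ is a value fixed once and for all. Proposition \ref{uniformize quasi} cannot be applied directly for a general $b$, because its proof (via Lemmas \ref{height control}--\ref{stepping stone quasisymmetric}) is tailored to the distinguished basepoint, whereas an arbitrary $b\in\hat{\mathcal B}(Y)$ may be based anywhere in $Y\cup\p Y$; but Proposition \ref{uniformize quasi} does apply to the intermediate space, and the passage from $Y_{\e_0,b_0}$ to $Y_{\e,b}$ is exactly what Theorem \ref{quasimobius uniformizations} is designed to control.

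First I would record that, since $Y$ is $\delta$-hyperbolic with $\delta=\delta(A)$ and $b_0$ is $1$-Lipschitz, Theorem \ref{Gehring-Hayman} makes $\rho_{\e_0,b_0}$ a GH-density for $Y$ with constant $20$ as soon as $\e_0\le\e_0(A)$, and Proposition \ref{uniform starlike} makes $Y$ $K$-roughly starlike from the basepoint of $b_0$ with $K=K(A)$ (using part (2) for the bounded case, for which $b_0$ is based at a maximal-$d$ point). Hence Proposition \ref{uniformize quasi} applies with data $(\e_0,b_0)$: the identity $\Omega\to Y_{\e_0,b_0}$ is $\p$-biLipschitz with data depending only on $A$, and $\eta$-quasisymmetric, hence $\eta$-quasim\"obius by \cite[Theorem 3.2]{V85}, with control function depending only on $A$.

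Second, I would verify the hypotheses of Proposition \ref{quasimobius uniform} (the form of Theorem \ref{quasimobius uniformizations} established in Section \ref{sec:main theorems}) for the pair $\bigl((\e_0,b_0),(\e,b)\bigr)$: both $\rho_{\e_0,b_0}$ and $\rho_{\e,b}$ are GH-densities with the common constant $M'=\max\{20,M\}$; $Y$ is $K(A)$-roughly starlike from the basepoint of $b_0$; and $Y$ is roughly starlike from the basepoint of $b$ by Proposition \ref{uniform starlike} — with constant $K(A)$ when $\Omega$ is unbounded (part (1)), and with constant $K'=K'(A,\phi(\Omega))$ when $\Omega$ is bounded (part (2), which is precisely where the standing assumption that $\p\Omega$ has at least two points is used). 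Proposition \ref{quasimobius uniform} then gives that the identity $Y_{\e_0,b_0}\to Y_{\e,b}$ is $\p$-biLipschitz and quasim\"obius with data and control function depending only on $\delta(A)$, $K$, $\e_0(A)$, $\e$, and $M'$, that is, only on $A$, $\e$, $M$ (and $\phi(\Omega)$ in the bounded case); note the conclusion is only quasim\"obius, not quasisymmetric, since a general $b$ may make $Y_{\e,b}$ unbounded while $Y_{\e_0,b_0}$ is bounded.

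Finally, composing the two maps via Proposition \ref{composition true} — which propagates both the $\p$-biLipschitz property and the quasim\"obius property quantitatively — yields that the identity $\Omega\to Y_{\e,b}$ is $\p$-biLipschitz and $\eta$-quasim\"obius with the asserted dependence. As an alternative for the quasim\"obius conclusion one may instead invoke Theorem \ref{homothety to mobius} once $\p$-biLipschitzness is known, observing that $Y_{\e,b}$ is itself uniform (by \cite[Theorem 1.4]{Bu20}, via the GH-density hypothesis and the rough starlikeness of $Y$ from the basepoint of $b$ just established). The main difficulty is not any single hard estimate but the bookkeeping: one must route through the auxiliary parameter $\e_0(A)$ to sidestep the fact that $\rho_{\e,b_0}$ need not be a GH-density for the given $\e$ (this being essentially the content of Question \ref{admissibility question}), keep $b_0$ based at a maximal-$d$ point so that the first factor stays quantitative in $A$ alone, and confirm that every constant produced by the composition depends only on $A$, $\e$, $M$, and $\phi(\Omega)$.
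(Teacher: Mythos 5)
Your proof is correct and follows exactly the route the paper indicates: factor the identity through $Y_{\e_0,b_0}$ for the distinguished basepoint of Proposition \ref{uniformize quasi}, invoke Proposition \ref{uniform starlike} for rough starlikeness from the basepoints of $b_0$ and $b$ (using $\phi(\Omega)$ and the hypothesis that $\p\Omega$ has at least two points in the bounded case), apply Proposition \ref{quasimobius uniform} to the second leg, and compose via Proposition \ref{composition true}. One small attribution slip: Proposition \ref{composition true} only governs composition of $\p$-biLipschitz maps, not the quasim\"obius property, but preservation of quasim\"obius maps under composition is standard, and your alternative route through Theorem \ref{homothety to mobius} once $\p$-biLipschitzness is established also resolves it.
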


\section{Quasihyperbolizing the uniformization}\label{sec:quasi to uniform}

One may also consider the reverse direction from Section \ref{sec:uniformize quasihyperbolic}. We can start with a proper geodesic $\delta$-hyperbolic space $X$ and a function $b \in \hat{\mathcal{B}}(X)$ such that $X$ is $K$-roughly starlike from the basepoint $\omega_{b}$ of $b$, let $\e > 0$ be given such that $\rho_{\e,b}$ is a  GH-density on $X$ with constant $M$, consider the resulting uniformization $X_{\e,b}$ of $X$, and then take the quasihyperbolization $Y$ of $X_{\e,b}$. For the case $b \in \mathcal{D}(X)$ this situation is considered under slightly more restrictive hypotheses in \cite[Proposition 4.37]{BHK}, in which case it is shown that the map $X \rightarrow Y$ induced by the identity map on $X$ is biLipschitz. We will generalize their result to the case of Busemann functions $b \in \mathcal{B}(X)$ here, with nearly the same proof. This result will not be needed elsewhere in the paper. Proposition \ref{biLip circle} is of significantly less theoretical importance than Proposition \ref{uniformize quasi} due to the fact that it is generally preferable to consider the original metric from $X$ as opposed to the quasihyperbolic metric on $X_{\e,b}$. 

\begin{prop}\label{biLip circle}
The identity map $X \rightarrow Y$ is $H$-biLipschitz with $H = H(\delta,K,\e,M)$. 
\end{prop}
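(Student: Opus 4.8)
The plan is to show directly that the quasihyperbolic metric $k_Y$ on the uniformization $X_{\e,b}$ is biLipschitz-equivalent to the original metric $d$ on $X$. The central tool will be Lemma \ref{compute distance}, which under our hypotheses gives $d_{\e,b}(x) \asymp_C \rho_{\e,b}(x) = e^{-\e b(x)}$ with $C = C(\delta,K,\e,M)$; thus up to a bounded multiplicative constant the conformal factor defining $k_Y$ agrees with $\rho_{\e,b}$ itself. Consequently, for any curve $\gamma$ in $X$ (parametrized by $d$-arclength, using the formula \eqref{appendix prop} from Remark \ref{appendix interlude}) we have
\[
\l_{k_Y}(\gamma) = \int_\gamma \frac{ds}{d_{\e,b}(\gamma(s))} \asymp_C \int_\gamma e^{\e b(\gamma(s))}\, ds = \l_{\rho}(\gamma)
\]
where $\rho(x) = e^{\e b(x)}$. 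So $k_Y$ is comparable, up to the constant $C$, to the conformal deformation of $X$ with conformal factor $e^{\e b}$. It therefore suffices to show that this deformation $(X, d_{e^{\e b}})$ is biLipschitz to $(X,d)$ via the identity, with constant depending only on $\delta, K, \e, M$.

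For the upper bound $k_Y(x,y) \lesssim |xy|$, I would take a geodesic $\sigma$ in $X$ from $x$ to $y$ and estimate $\l_{\rho}(\sigma)$. Since $\rho_{\e,b}$ is a GH-density with constant $M$, the Gehring–Hayman inequality \eqref{first GH} controls $\l_{\rho_{\e,b}}(\sigma)$ by $M\,d_{\e,b}(x,y)$; combined with the comparison $d_{\e,b}(x,y) \asymp_C e^{-\e(x|y)_b}\min\{1,|xy|\}$ from Lemma \ref{lem:estimate both} and with the estimate $d_{\e,b}(x) \asymp \rho_{\e,b}(x)$, this should bound $k_Y(x,y)$ above by a constant multiple of $|xy|$ — the $\min\{1,|xy|\}$ truncation being exactly what keeps the estimate linear rather than exponential for large $|xy|$. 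Concretely, I expect to split into the case $|xy| \le 1$ (where direct integration of the Harnack inequality \eqref{Harnack}/\eqref{arc Harnack} along $\sigma$ gives $k_Y(x,y) \lesssim |xy|$ with constant depending only on $\e$) and the case $|xy| \ge 1$ (where one chains together unit-length subsegments of $\sigma$, using that $b$ is $1$-Lipschitz so each contributes $O(1)$ to $k_Y$, hence the total is $\lesssim |xy|$). This mirrors the argument of \cite[Proposition 4.37]{BHK}.

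For the lower bound $k_Y(x,y) \gtrsim |xy|$, the point is that any curve $\tau$ in $X$ joining $x$ to $y$ has $k_Y$-length at least a constant times $|xy|$. Again I would argue on the $d$-geodesic $xy$: a curve $\tau$ with small $k_Y$-length stays (in the $d_{\e,b}$-metric) in a controlled region, and by \eqref{arc Harnack} and Lemma \ref{compute distance} a curve of $k_Y$-length $<1$ joining $x$ to $y$ forces $|xy|$ to be comparably small, while for $|xy|\ge 1$ one again subdivides. Alternatively, and perhaps more cleanly, one uses the already-established Proposition \ref{uniformize quasi}: the identity $X_{\e,b}\to (X_{\e,b})_k$-type relation there, applied with roles suitably arranged, shows $\partial$-biLipschitz equivalence, and then Theorem \ref{transfer lip} converts this into the biLipschitz statement for the quasihyperbolic metrics. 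In fact the slickest route may be: $X_{\e,b}$ is $A$-uniform (with $A = A(\delta,K,\e,M)$) by \cite[Theorem 1.4]{Bu20}, so Proposition \ref{uniformize quasi} applies to $\Omega = X_{\e,b}$ and identifies $X_{\e,b}$ with a uniformization of its own quasihyperbolization in a $\partial$-biLipschitz way; combined with the fact that $X$ is already such a uniformization (namely $X_{\e,b}$ is the conformal deformation of $X$, and $d_{\e,b}(x)\asymp\rho_{\e,b}(x)$ says $X$ and the quasihyperbolization agree as conformal deformations up to bounded factor), Theorem \ref{transfer lip} gives the biLipschitz conclusion with the right constant dependence.

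The main obstacle I anticipate is bookkeeping the truncation $\min\{1,|xy|\}$ correctly so that the final estimate is genuinely linear in $|xy|$ for all scales, and making sure every invocation of Lemma \ref{lem:estimate both} and Lemma \ref{compute distance} is legitimate (these require $\rho_{\e,b}$ to be a GH-density, which holds by hypothesis, and rough starlikeness from $\omega_b$, also by hypothesis). There is also a minor subtlety that the quasihyperbolization $Y$ is defined using $d_{X_{\e,b}}(\cdot)$, the distance to the metric boundary $\p X_{\e,b}$, which by \cite[Theorem 1.6]{Bu20} is identified with $\p_{\omega_b} X$ — so one must confirm that this identification is the source of the estimate $d_{\e,b}(x)\asymp \rho_{\e,b}(x)$, which is precisely the content of Lemma \ref{compute distance}. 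Once these identifications are in place, the proof is essentially the combination of Gehring–Hayman (upper bound) and Harnack (lower bound) described above, exactly as in \cite[Proposition 4.37]{BHK}, with the Busemann case \ $b\in\mathcal{B}(X)$ requiring no new idea beyond the estimates already assembled in Section \ref{sec:hyperbolic}.
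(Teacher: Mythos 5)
Your opening display contains an error that propagates through the whole proposal. To express $\l_{k_Y}(\gamma)$ using $d$-arclength you must account for the change of variable $ds_{\e,b}=\rho_{\e,b}\,ds_d$: the correct formula is
\[
\l_{k_Y}(\gamma)=\int_{\gamma}\frac{\rho_{\e,b}(\gamma(s))}{d_{\e,b}(\gamma(s))}\,ds
\]
(with $ds$ denoting $d$-arclength), not $\int_{\gamma}ds/d_{\e,b}(\gamma(s))$. Your version drops the numerator $\rho_{\e,b}$. Once this is put back, Lemma \ref{compute distance} gives $\rho_{\e,b}/d_{\e,b}\asymp_{C}1$, so the conformal factor expressing $k$ as a conformal deformation of $X$ is comparable to $1$ — the relevant comparison metric is $(X,d)$ itself, not your $(X,d_{e^{\e b}})$. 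The factor $e^{\e b}=1/\rho_{\e,b}$ is wildly unbounded and $(X,d_{e^{\e b}})$ is not biLipschitz to $(X,d)$; the subsequent part of your proposal that aims to prove that equivalence is chasing a false target.

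Beyond this, your GH/Harnack outline is roughly the shape of the paper's actual argument, but you have the assignment of tools to directions reversed. In the paper the upper bound $k(x,y)\le C|xy|$ is the easy direction: one takes the $d$-geodesic $\gamma$ from $x$ to $y$, writes $k(x,y)\le\int_{0}^{|xy|}\rho_{\e}(\gamma(t))/d_{\e}(\gamma(t))\,dt$, and is done immediately by Lemma \ref{compute distance} — no case split, no GH. It is the lower bound that needs work: the paper uses the lower half of \eqref{hyperbolize comparison} together with the Gehring–Hayman inequality \eqref{first GH} to get $k(x,y)\ge\log(1+C^{-1}\l_{\e}(\gamma)/\min\{\rho_{\e}(x),\rho_{\e}(y)\})$, then splits into $|xy|\le1$ (direct Harnack) and $|xy|\ge1$ (tripod-map estimate on a triangle $\omega xy$ to show $\l_{\e}(\gamma)\gtrsim\rho_{\e}(u)$ and $\min\{\rho_{\e}(x),\rho_{\e}(y)\}\lesssim\rho_{\e}(u)e^{-\e|xy|/2}$, then \eqref{elementary}). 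Your proposal puts the case split and GH on the upper bound and merely gestures at the lower bound. Finally, your ``slickest route'' paragraph actually contains the seed of a very short correct proof — the observation that $d_{\e,b}\asymp\rho_{\e,b}$ means $Y$ and $X$ agree as conformal deformations of $X_{\e,b}$ up to a bounded factor — but you should pursue that observation directly; routing through Proposition \ref{uniformize quasi} and Theorem \ref{transfer lip} (proved with different hypotheses and for different purposes) only obscures it.
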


\begin{proof}
We will prove this result in the case $b \in \mathcal{B}(X)$. The case $b \in \mathcal{D}(X)$ can then be deduced from this using a ray augmentation argument as we did at the end of the proof of Proposition \ref{uniformize quasi}. Since this result will not be playing an important role in this paper, we leave the details of this deduction to the reader. 

Thus we will assume that $b \in \mathcal{B}(X)$. We write $X_{\e} = X_{\e,b}$, $\rho_{\e} = \rho_{\e,b}$, $d_{\e} = d_{\e,b}$, etc. We write $\l_{\e}(\gamma) = \l_{d_{\e}}(\gamma)$ for the length of a rectifiable curve $\gamma$ in the metric $d_{\e}$.  For $x,y \in X$ we will denote their distance in $X$ by $|xy|$ and their distance in $Y$ by $k(x,y)$. We start by showing that the identity map $X \rightarrow Y$ is Lipschitz. Let $x,y \in X$ and let $\gamma$ be a geodesic in $X$ joining them, parametrized by arc length. Let 
\[
L_{\e}(t) = \int_{0}^{t}\rho_{\e}(\gamma(s))\,ds,
\]
be the length measurement of this curve in $X_{\e}$. Then by Proposition \ref{compute distance}, letting $|d_{\e}z|$ stand for the element of arclength in $(X_{\e},d_{\e})$, 
\begin{align*} 
k(x,y) &\leq \int_{\gamma} \frac{|d_{\e}z|}{d_{\e}(z)} \\
&= \int_{\gamma} \frac{dL_{\e}(t)}{d_{\e}(z)} \\
&= \int_{0}^{|xy|}\frac{\rho_{\e}(\gamma(t))}{d_{\e}(\gamma(t))}\,dt \\
&\leq C |xy|,
\end{align*}
with $C = C(\delta,\e,K,M)$. It follows that the identity map $X \rightarrow Y$ is $C$-Lipschitz. 

For the lower bound on $k(x,y)$, we apply the inequality \eqref{first GH} and the inequality \eqref{hyperbolize comparison} to the geodesic $\gamma$ and then use Lemma \ref{compute distance} as well to obtain
\begin{align*}
k(x,y) &\geq \log \left(1+ \frac{d_{\e}(x,y)}{\min\{d_{\e}(x),d_{\e}(y)\}}\right) \\
&\geq \log \left(1+ C^{-1}\frac{\l_{\e}(\gamma)}{\min\{d_{\e}(x),d_{\e}(y)\}}\right) \\
&\geq \log \left(1+ C^{-1}\frac{\l_{\e}(\gamma)}{\min\{\rho_{\e}(x),\rho_{\e}(y)\}}\right),
\end{align*}
with $C = C(\delta,K,\e,M) \geq 1$. We now restrict to the case $|xy| \leq 1$. By the Harnack inequality \eqref{Harnack} we then have that  $\rho_{\e}(x) \asymp_{e^{\e}} \rho_{\e}(z)$ for all $z \in \gamma$. This implies that 
\[
\l_{\e}(\gamma) \geq e^{-\e}\rho_{\e}(x)|xy|.
\]
Therefore, 
\[
k(x,y) \geq \log \left(1+ C^{-1} |xy|\right),
\]
still with $C = C(\delta,K,\e,M) \geq 1$. Using the inequality \eqref{strange inequality} with $t = C^{-1}|xy|$ and $a = 1$, noting that since $|xy| \leq 1$ and $C \geq 1$ we have $t \leq 1$, we conclude that
\[
k(x,y) \geq C^{-1}|xy|, 
\]
with $C =  C(\delta,K,\e,M)$. This handles the case $|xy| \leq 1$. 

We can thus move to the case $|xy| \geq 1$. We consider a geodesic triangle $\Delta = \omega xy$ with vertices $\omega,x,y$ that has $\gamma$ as an edge. We let $T: \Delta \rightarrow \Upsilon$ be an associated tripod map and assume that $\gamma$ is parametrized in accordance with Proposition \ref{compute consequence}. We set $u = \gamma(0)$. A straightforward computation using (5) of Proposition \ref{compute consequence} shows that
\begin{align*}
\l_{\e}(\gamma) &\geq C^{-1}\rho_{\e}(u)\left(\int_{0}^{|xu|} e^{-\e t}\,dt + \int_{0}^{|uy|} e^{-\e t}\,dt\right) \\
&= C^{-1}\e^{-1}\rho_{\e}(u)(2- e^{-\e |xu|} - e^{-\e |yu|}) \\
&\geq C^{-1}\e^{-1}\rho_{\e}(u)(1- e^{-\e |xy|}) \\
&\geq C^{-1}\rho_{\e}(u),
\end{align*}
with $C = C(\delta,K,\e,M) \geq 1$, using that $|xy| \geq 1$ so that $1-e^{-\e |xy|} \geq 1-e^{-\e}$. 

By (5) of Proposition \ref{compute consequence} we have $|xu| \doteq_{c} b(x)-b(u)$ and $|yu| \doteq_{c} b(y)-b(u)$ with $c = c(\delta)$. By switching the roles of $x$ and $y$ if necessary we can assume that $|xu| \geq |yu|$ and therefore $|xu| \geq \frac{1}{2}|xy|$. Then 
\[
\rho_{\e}(x) \asymp_{C(\delta)} \rho_{\e}(u)e^{-\e |xu|} \leq \rho_{\e}(u)e^{-\frac{\e}{2}|xy|}, 
\] 
so that 
\[
\min\{\rho_{\e}(x),\rho_{\e}(y)\} \leq \rho_{\e}(u)e^{-\frac{\e}{2}|xy|}.
\]
Picking up from the inequality 
\[
k(x,y) \geq \log \left(1+ C^{-1}\frac{\l_{\e}(\gamma)}{\min\{\rho_{\e}(x),\rho_{\e}(y)\}}\right),
\]
it now follows that
\[
k(x,y) \geq \log(1+ C^{-1}e^{\frac{\e}{2}|xy|}),
\]
with  $C = C(\delta,K,\e,M) \geq 1$. We apply the elementary inequality \cite[(2.12)]{BHK}, which states for $a \geq 1$, $t \geq 0$, 
\begin{equation}\label{elementary}
a^{-1}\log(1+at) \leq \log(1+t),
\end{equation}
with $a = C$, $t = C^{-1}e^{\frac{\e}{2}|xy|}$, to obtain 
\[
k(x,y) \geq C^{-1}\log(1+ e^{\frac{\e}{2}|xy|}) \geq C^{-1}|xy|,
\]
with $C = C(\delta,K,\e,M)$. This gives the desired lower bound on $k(x,y)$. 
\end{proof}

\section{The main theorems}\label{sec:main theorems}

In this final section we will complete the proofs of the main theorems. The key step will be Proposition \ref{quasimobius uniform},  which may be of independent interest.

Following Buyalo and Schroeder \cite[Chapter 4]{BS07}, we define the \emph{cross-difference} of four points $x,y,z,w$ in a metric space $X$ by
\begin{equation}\label{cross difference}
\langle x,y,z,w\rangle = \frac{1}{2}(|xz|+|yw|-|xy|-|zw|).
\end{equation} 
The significance of the expression \eqref{cross difference} lies in the following lemma, which is a simple calculation that we leave to the reader.

\begin{lem}\label{cross difference lemma}
Let $X$ be a proper geodesic $\delta$-hyperbolic space. For $b \in \hat{\mathcal{B}}(X)$ we have for any $x,y,z,w \in X$, 
\begin{equation}\label{cross equality}
-(x|z)_{b}-(y|w)_{b} + (x|y)_{b}+(z|w)_{b} = \langle x,y,z,w\rangle.
\end{equation}
\end{lem}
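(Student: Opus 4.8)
The plan is to expand both sides using the definition \eqref{Gromov Busemann product} of the Gromov product based at $b$, which reads $(p|q)_{b} = \tfrac{1}{2}(b(p)+b(q)-|pq|)$ for all $p,q \in X$ and for every $b \in \hat{\mathcal{B}}(X)$, regardless of whether $b \in \mathcal{B}(X)$ or $b \in \mathcal{D}(X)$ (in the latter case, with $b = b_{z}+s$, this is consistent with the identity $(p|q)_{b} = (p|q)_{z}+s$ recorded right after \eqref{Gromov Busemann product}). Substituting this into the left-hand side of \eqref{cross equality}, each of the values $b(x),b(y),b(z),b(w)$ occurs exactly once with coefficient $+\tfrac12$ and once with coefficient $-\tfrac12$, so all Busemann/distance-function terms cancel, and what remains is $\tfrac12\left(|xz|+|yw|-|xy|-|zw|\right)$, which is by definition \eqref{cross difference} precisely $\langle x,y,z,w\rangle$.

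I would then remark — mostly as a sanity check — that no geometric hypothesis is actually invoked: the identity is a purely formal consequence of the definition of the Gromov product based at $b$ and would hold verbatim on any metric space on which $b$ is defined. The assumption that $X$ is a proper geodesic $\delta$-hyperbolic space is carried along only because that is the standing context in which $\hat{\mathcal{B}}(X)$ and these Gromov products have been introduced. There is therefore no genuine obstacle to overcome here; the whole argument is the single line of cancellation above, which is why it is appropriately described as a calculation left to the reader.
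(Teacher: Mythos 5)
Your calculation is correct and is exactly the "simple calculation" the paper leaves to the reader: expand each Gromov product via $(p|q)_{b} = \tfrac{1}{2}(b(p)+b(q)-|pq|)$, note that each of $b(x),b(y),b(z),b(w)$ appears once with $+\tfrac12$ and once with $-\tfrac12$ so the $b$-terms cancel, and what remains is by definition $\langle x,y,z,w\rangle$. Your remark that the hyperbolicity/properness hypotheses play no role and the identity is purely formal is also accurate.
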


We will also require a lemma regarding the behavior of the distance function $d_{\e,b}$ on balls of the form $B_{d_{\e,b}}(x,\la d_{\e,b}(x))$ for $\la > 0$ sufficiently small.

\begin{lem}\label{subwhitney}
Let $X$ be a proper geodesic $\delta$-hyperbolic space and let $b \in  \hat{\mathcal{B}}(X)$. We suppose that $X$ is $K$-roughly starlike from the basepoint $\omega_{b}$ of $b$ and that we are given $\e > 0$ such that $\rho_{\e,b}$ is a GH-density with constant $M$. Then there exists $0 < \la < 1$ with $\la = \la(\delta,K,\e,M)$ such that for any $x \in X$ and any $y,z \in B_{d_{\e,b}}(x,\la d_{\e,b}(x))$ we have that $|yz| \leq 1$ and that
\begin{equation}\label{subwhitney inequality}
d_{\e,b}(y,z) \asymp_{C} d_{\e,b}(x)|yz|,
\end{equation}
with $C = C(\delta,K,\e,M)$. 
\end{lem}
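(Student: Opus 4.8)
The plan is to leverage the two key estimates established earlier: the comparison \eqref{estimate both} from Lemma \ref{lem:estimate both}, which says $d_{\e,b}(x,y) \asymp_{C} e^{-\e(x|y)_{b}}\min\{1,|xy|\}$, and the distance estimate \eqref{compute distance inequality} from Lemma \ref{compute distance}, which says $d_{\e,b}(x) \asymp_{C} \rho_{\e,b}(x) = e^{-\e b(x)}$. Both hold with constants depending only on $\delta$, $K$, $\e$, $M$. First I would fix $x \in X$ and use these to understand which points $y$ lie in the ball $B_{d_{\e,b}}(x,\la d_{\e,b}(x))$ for small $\la$. Combining the two estimates, for $y \in X$ with $|xy| \le 1$ we have $d_{\e,b}(x,y) \asymp_{C} e^{-\e(x|y)_{b}}|xy|$, and since $(x|y)_{b} = \tfrac12(b(x)+b(y)-|xy|) \ge b(x) - |xy|$ (using that $b$ is $1$-Lipschitz gives $|b(x)-b(y)|\le|xy|$), we get $e^{-\e(x|y)_{b}} \asymp_{e^{\e}} e^{-\e b(x)} \asymp_{C} d_{\e,b}(x)$ once $|xy|\le 1$. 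So on the region $|xy|\le 1$, $d_{\e,b}(x,y) \asymp_{C} d_{\e,b}(x)|xy|$.

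Next I would choose $\la$ small enough, depending only on $\delta,K,\e,M$, to force $y \in B_{d_{\e,b}}(x,\la d_{\e,b}(x))$ to satisfy $|xy| \le \tfrac12$. For this, note that \eqref{arc Harnack} gives a lower bound $d_{\e,b}(x,y) \ge \rho_{\e,b}(x)\e^{-1}(1-e^{-\e|xy|})$, which together with $d_{\e,b}(x)\asymp_C\rho_{\e,b}(x)$ shows that $d_{\e,b}(x,y) < \la d_{\e,b}(x)$ with $\la$ small forces $1-e^{-\e\min\{|xy|,1\}}$ to be small, hence $|xy|$ to be small; choosing $\la = \la(\delta,K,\e,M)$ appropriately guarantees $|xy| \le \tfrac14$. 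Consequently, for $y,z \in B_{d_{\e,b}}(x,\la d_{\e,b}(x))$ the triangle inequality in $(X,d)$ gives $|yz| \le |xy| + |xz| \le \tfrac12 \le 1$, which is the first claim. Then I would apply the region estimate from the first paragraph twice more: to the pair $(x,y)$ and the pair $(y,z)$. From the pair $(x,y)$ we already have $d_{\e,b}(x,y) \asymp_C d_{\e,b}(x)|xy|$; for the pair $(y,z)$, since $|yz|\le 1$, the same argument applied with basepoint $y$ gives $d_{\e,b}(y,z) \asymp_C d_{\e,b}(y)|yz|$. Finally, since $k(x,y)$-type control is not available here, I instead note that $b(y) \doteq_{1} b(x)$ because $|xy|\le 1$ and $b$ is $1$-Lipschitz, so $d_{\e,b}(y) \asymp_C \rho_{\e,b}(y) \asymp_{e^{\e}} \rho_{\e,b}(x) \asymp_C d_{\e,b}(x)$; substituting this into $d_{\e,b}(y,z) \asymp_C d_{\e,b}(y)|yz|$ yields \eqref{subwhitney inequality}.

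The main obstacle I anticipate is bookkeeping the chain of $\asymp$-constants to be sure they depend only on the allowed parameters $\delta, K, \e, M$ and that the choice of $\la$ can be made to depend only on these as well — in particular, making the implication "$d_{\e,b}(x,y)$ small relative to $d_{\e,b}(x)$ $\Rightarrow$ $|xy|$ small" quantitative and uniform. This requires care because the estimate \eqref{estimate both} is only a two-sided comparison up to a multiplicative constant $C(\delta,\e,M)$, and one must track that both the upper and lower bounds in that comparison are used in the right places (the lower bound, which requires the GH-density hypothesis, is what pins down that $|xy|$ must be small). Once $\la$ is fixed, everything else is a routine combination of the Harnack inequality \eqref{Harnack}, the Lipschitz property of $b$, and Lemmas \ref{lem:estimate both} and \ref{compute distance}.
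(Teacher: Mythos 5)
Your proof is correct and follows essentially the same route as the paper: force $|yz|\leq 1$ by combining the lower bound of \eqref{arc Harnack} with Lemma~\ref{compute distance} for $\la$ small, then apply \eqref{estimate both} together with the $1$-Lipschitz property of $b$ (and another use of Lemma~\ref{compute distance} or, as the paper does more simply, inequality \eqref{lip boundary distance}) to obtain the two-sided comparison. One small inaccuracy in your commentary: the implication ``$d_{\e,b}(x,y)<\la d_{\e,b}(x)$ with $\la$ small $\Rightarrow |xy|$ small'' comes from the elementary lower bound in \eqref{arc Harnack}, not the GH-dependent lower bound in \eqref{estimate both}; the GH-density hypothesis (together with rough starlikeness) enters this step only through Lemma~\ref{compute distance}, which relates $d_{\e,b}(x)$ to $\rho_{\e,b}(x)$.
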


\begin{proof}
To simplify notation we will write $B_{\e}(x,r) = B_{d_{\e,b}}(x,r)$ for the ball of radius $r$ centered at $x$ in $X_{\e,b}$. We will write $X_{\e} = X_{\e,b}$, $d_{\e} = d_{\e,b}$, etc. Let $0 < \la < 1$ be given. We first observe by inequality \eqref{lip boundary distance} that for $y \in B(x,\la d_{\e}(x))$ we have
\[
d_{\e}(y) \leq d_{\e}(x) + d_{\e}(x,y) \leq (1+\la)d_{\e}(x),
\]
and 
\[
d_{\e}(y) \geq d_{\e}(x) - d_{\e}(x,y) \geq (1-\la)d_{\e}(x). 
\]
By making the restriction $\la \leq \frac{1}{2}$ we can then conclude that $d_{\e}(y) \asymp_{2} d_{\e}(x)$ for $y \in B_{\e}(x,\la d_{\e}(x))$. We will impose this restriction in what follows. 

Thus if $y,z \in B_{\e}(x,\la d_{\e}(x))$ then combining inequality \ref{arc Harnack} with Lemma \ref{compute distance} implies that
\[
1-e^{-\e |yz|} \leq C\frac{d_{\e}(y,z)}{d_{\e}(y)} \leq C\frac{d_{\e}(y,z)}{d_{\e}(x)} \leq C\la,
\]
with $C = C(\delta,K,\e,M)$, which can be rearranged to 
\begin{equation}\label{key subwhitney}
e^{-\e|yz|} \geq 1-C\la.
\end{equation}
Thus by further decreasing $\la$, depending only on $\delta$, $K$, $\e$, and $M$, we can assume that $y,z \in B_{\e}(x,\la d_{\e}(x))$ implies that $e^{-\e |yz|} \geq e^{-\e}$, which implies that $|yz| \leq 1$. This gives the first claim. 

For the second claim we let $\la$ be as determined in the first claim and let  $y,z \in B_{\e}(x,\la d_{\e}(x))$. Since $|yz| \leq 1$, by the comparison \eqref{estimate both} we have
\[
d_{\e}(y,z) \asymp_{C} e^{-\e (y|z)_{b}}|yz|,
\]
with $C = C(\delta,K,\e,M)$. Since $b$ is $1$-Lipschitz we have 
\[
|b(y)-(y|z)_{b}| \leq |yz| \leq 1,
\]
and therefore $\rho_{\e}(y) \asymp_{e^{\e}} e^{-\e (y|z)_{b}}$. Thus
\[
d_{\e}(y,z) \asymp_{C} \rho_{\e}(y)|yz| \asymp_{C} d_{\e}(y)|yz| \asymp_{2}d_{\e}(x)|yz|,
\]
with $C = C(\delta,K,\e,M)$, where we have used Lemma \ref{compute distance} once more. The comparison \eqref{subwhitney inequality} follows. 
\end{proof}

The following proposition is inspired by \cite[Proposition 4.15]{BHK}. However our methods are somewhat different.

\begin{prop}\label{quasimobius uniform}
Let $f:X \rightarrow X'$ be an $H$-biLipschitz map between proper geodesic $\delta$-hyperbolic spaces and let $b \in \hat{\mathcal{B}}(X)$, $b' \in  \hat{\mathcal{B}}(X')$ be given such that $X$ and $X'$ are $K$-roughly starlike from their basepoints $\omega_{b}$ and $\omega_{b'}$ respectively.  We suppose that we are given $\e,\e' > 0$ such that $\rho_{\e,b}$ and $\rho_{\e',b'}$ are GH-densities for $X$ and $X'$ respectively with the same constant $M$. 

Then the induced map $f:X_{\e,b} \rightarrow X_{\e',b'}$ is $\p$-biLipschitz with data $(L,\la)$ and is $\eta$-quasim\"obius with $\eta(t)= C\max\{t^{\beta},t^{\beta^{-1}}\}$. The constants  $L$, $\la$, and $C$ depend only on $\delta$, $K$, $\e$, $\e'$, $M$, and $H$. The exponent $\beta \geq 1$ has the form $\beta = \max\{\frac{\e'}{\e},\frac{\e}{\e'}\}C_{0}$ with $C_{0} = C_{0}(\delta,H)$.
\end{prop}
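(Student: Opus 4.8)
For orientation: the two assertions---$\p$\nobreakdash-biLipschitz and quasim\"obius---are proved separately, and the quasim\"obius part does not use the $\p$-biLipschitz part. Throughout I write $d_{\e,b}, d_{\e',b'}$ for the metrics on $X_{\e,b}, X_{\e',b'}$, and I freely use the hypotheses (rough starlikeness from $\omega_b,\omega_{b'}$, GH-density) that are needed to apply Lemmas \ref{lem:estimate both}, \ref{compute distance}, \ref{subwhitney}.

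\emph{The $\p$-biLipschitz part.} The plan is to verify the hypotheses of Lemma \ref{weak to normal}. Apply Lemma \ref{subwhitney} in $X$ and in $X'$ to get thresholds $\la_{0}=\la_{0}(\delta,K,\e,M)$ and $\la_{0}'=\la_{0}'(\delta,K,\e',M)$ such that on $B_{d_{\e,b}}(x,\la_{0}d_{\e,b}(x))$ one has $|yz|\le 1$ and $d_{\e,b}(y,z)\asymp d_{\e,b}(x)|yz|$ (with the analogous statement in $X'$). Since $f$ is $H$-biLipschitz for the original metrics $|\cdot|$, the lower estimate $d_{\e,b}(x,y)\gtrsim d_{\e,b}(x)|xy|$ from Lemma \ref{subwhitney} shows that for $\la$ small enough (depending on $\delta,K,\e,\e',M,H$), $y\in B_{d_{\e,b}}(x,\la d_{\e,b}(x))$ forces $|xy|$, hence $|f(x)f(y)|$, small, so that $f(y)\in B_{d_{\e',b'}}(f(x),\la_{0}'d_{\e',b'}(f(x)))$. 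Combining the two instances of Lemma \ref{subwhitney} with $|f(y)f(z)|\asymp_{H}|yz|$ gives \eqref{later controlled comparison} with constant $L=L(\delta,K,\e,\e',M,H)$ on $B_{d_{\e,b}}(x,\la d_{\e,b}(x))$; running the same estimate with $f^{-1}$ (also $H$-biLipschitz) yields the inclusion \eqref{controlled inclusion} after enlarging $L$. Lemma \ref{weak to normal} then gives $\p$-biLipschitzness with data $(L,L^{-1}\la)$.

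\emph{Cross-ratio formula and reduction.} Write $Q(x,y,z,w)=\frac{\min\{1,|xz|\}\min\{1,|yw|\}}{\min\{1,|xy|\}\min\{1,|zw|\}}$, a cross-ratio of the truncated metric $\min\{1,|\cdot|\}$. Lemma \ref{lem:estimate both} together with the cross-difference identity \eqref{cross equality} gives, for distinct $x,y,z,w\in X$,
\[
[x,y,z,w]_{d_{\e,b}}\asymp_{C}e^{\e\langle x,y,z,w\rangle}\,Q(x,y,z,w),\qquad C=C(\delta,\e,M),
\]
and the analogous formula in $X'$ with $\e',b'$; note the basepoint terms $e^{-\e b(\cdot)}$ cancel, which is exactly why $b$ and $b'$ need not correspond under $f$. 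Since $f$ is $H$-biLipschitz, $\min\{1,|f(a)f(b)|\}\asymp_{H}\min\{1,|ab|\}$, so $Q(f(x),f(y),f(z),f(w))\asymp_{H^{4}}Q(x,y,z,w)$. Thus, writing $t=[x,y,z,w]_{d_{\e,b}}$ and $t'=[f(x),f(y),f(z),f(w)]_{d_{\e',b'}}$, the whole estimate reduces to comparing the exponential factors $e^{\e'\langle f(x),f(y),f(z),f(w)\rangle'}$ and $e^{\e\langle x,y,z,w\rangle}$.

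\emph{The central estimate.} I claim there are $C_{0}=C_{0}(\delta,H)\ge\max\{1,H\}$ and $c_{0}=c_{0}(\delta,H)$ with
\[
\langle f(x),f(y),f(z),f(w)\rangle'\ \le\ C_{0}\,\langle x,y,z,w\rangle^{+}-C_{0}^{-1}\langle x,y,z,w\rangle^{-}+c_{0},
\]
where $s^{+}=\max\{s,0\}$, $s^{-}=\max\{-s,0\}$. The plan is to use the standard approximation of the four points $x,y,z,w$ and the six geodesics between them by a tree (see \cite{GdH90}, \cite[Ch.~4]{BS07}), which yields $\langle x,y,z,w\rangle\doteq_{c(\delta)}\dist([xy],[zw])-\dist([xz],[yw])$ and $\min\{\dist([xy],[zw]),\dist([xz],[yw])\}\le c(\delta)$ (and likewise in $X'$), hence $\dist_{X}([xy],[zw])\le\langle x,y,z,w\rangle^{+}+c(\delta)$ and $\dist_{X}([xz],[yw])\ge\langle x,y,z,w\rangle^{-}$. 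Combine this with the Morse lemma: $f$ sends each geodesic of $X$ to an $(H,0)$-quasigeodesic of $X'$, hence to a curve within Hausdorff distance $D=D(\delta,H)$ of the corresponding geodesic of $X'$, so $\dist_{X'}([f(a)f(b)],[f(c)f(d)])$ lies within $2D$ of $\dist_{X'}(f([ab]),f([cd]))$, which is $\asymp_{H}\dist_{X}([ab],[cd])$. Feeding the upper bound on $\dist_{X}([xy],[zw])$ and the lower bound on $\dist_{X}([xz],[yw])$ into the $X'$-cross-difference formula gives the claim. I expect this to be the main obstacle: the cross-difference involves a delicate cancellation that is \emph{not} controlled by the easy multiplicative distortion of Gromov products under a biLipschitz map (a difference of two multiplicatively-controlled Gromov products need not be controlled), so one must work with distances between the relevant geodesics---quantities insensitive to where the individual points sit.

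\emph{Conclusion.} Set $\beta=\max\{\e'/\e,\e/\e'\}C_{0}$, so $\beta\ge\max\{1,\e'H/\e,\e H/\e'\}$. From $t\asymp_{C}e^{\e\langle x,y,z,w\rangle}Q$ one sees: if $\langle x,y,z,w\rangle$ is large positive then $\dist([xy],[zw])$ is large, so all four ``cross'' distances $|xz|,|xw|,|yz|,|yw|$ are $\ge 1$, forcing $Q\ge 1$ and $t\gg 1$ (and symmetrically $\langle x,y,z,w\rangle$ large negative forces $Q\le 1$, $t\ll 1$); while for $|\langle x,y,z,w\rangle|$ bounded one gets $t\asymp Q\asymp Q'\asymp t'$. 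In each of these three regimes, substituting the central estimate for $\langle f(x),f(y),f(z),f(w)\rangle'$, using $Q\asymp e^{-\e\langle x,y,z,w\rangle}t$, $Q'\asymp Q$, and in the unbounded regimes the bounds $1\le Q\lesssim t$ resp.\ $t\lesssim Q\le 1$, a short computation gives $t'\le C\max\{t^{\beta},t^{1/\beta}\}$ with $C=C(\delta,K,\e,\e',M,H)$; the condition $C_{0}\ge H$ is precisely what makes the exponents close. This is the one-sided quasim\"obius inequality of Definition \ref{def:quasimobius}, and \eqref{two-sided quasimobius inequality} promotes it to the full statement. Tracking constants through the above yields the asserted dependencies of $L$, $\la$, $C$, and $C_{0}$.
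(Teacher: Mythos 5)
Your argument is correct and follows the same overall architecture as the paper's: reduce both cross-ratios to the form $e^{\e\langle\cdot\rangle}Q$ via Lemma \ref{lem:estimate both} and the cross-difference identity, observe that the combinatorial factor $Q$ is preserved up to $H^{4}$, and control the remaining exponential factor by showing that an $H$-biLipschitz map quasi-preserves cross-differences. The one substantive deviation is in how that last step is obtained: the paper cites \cite[Theorem 4.4.1]{BS07} directly for the estimate \eqref{PQ}, whereas you re-derive an equivalent one-sided form from scratch via the tree-approximation identity $\langle x,y,z,w\rangle \doteq_{c(\delta)} \dist([xy],[zw]) - \dist([xz],[yw])$ together with Morse-lemma stability of biLipschitz images of geodesics. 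Your route is self-contained and yields the explicit value $C_{0} = H$ in place of an unspecified $C_{0}(\delta,H)$; and you correctly diagnose the obstruction that makes this step unavoidable --- the cross-difference is a cancellation of two Gromov products, each only multiplicatively distorted by $f$, so one must pass to distances between geodesics --- which is precisely the content of the Buyalo--Schroeder theorem the paper invokes. Your trichotomy on $\langle x,y,z,w\rangle$ (large positive, large negative, bounded) is an equivalent repackaging of the paper's case split on the sign of $\langle\cdot\rangle$ together with $G \gtrless 1$. For the $\p$-biLipschitz part, routing through Lemma \ref{weak to normal} is correct but slightly roundabout: the paper simply proves that $f$ is $\p$-Lipschitz with data depending only on the given parameters and applies the identical argument to $f^{-1}$, which is all the definition requires.
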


\begin{proof}

To simplify notation in the proof we will write $x' = f(x)$ for the image of a point under $f:X \rightarrow X'$. We write $d_{\e',b'} = d'_{\e',b'}$, etc. for the uniformized distance on $X'$. We write $B_{\e} = B_{d_{\e,b}}$ for balls in $X_{\e,b}$ and $B_{\e'}' = B_{d'_{\e',b'}}$ for balls in $X_{\e',b'}'$. To avoid repeatedly writing out long lists of parameter dependencies, we will use the expression ``the given data" to indicate the parameters $\delta$, $K$, $\e$, $\e'$, $M$, and $H$.

We will first establish that the induced map $f:X_{\e,b} \rightarrow X_{\e',b'}$ is $\p$-biLipschitz with data $(L,\la)$ depending only on the given data. Since $f^{-1}:X' \rightarrow X$ is also $H$-biLipschitz, it suffices by symmetry to show that $f$ is $\p$-Lipschitz with data $(L,\la)$ depending only on the given data. For use later we will prove this claim under the weaker hypothesis that $f:X \rightarrow X'$ is a map that is $H$-Lipschitz for some $H \geq 0$.

We let $\la'$ be sufficiently small that the conclusions of Lemma \ref{subwhitney} hold on both $B_{\e}(x,\la'd_{\e,b}(x))$ and $B_{\e'}'(x',\la'd_{\e',b'}(x'))$ for $x \in X$; by that lemma we can choose $\la'$ to depend only on the given data. We claim that there is a $0 < \la \leq \la' $ with $\la$ also depending only on the given data such that if $y \in B_{\e}(x,\la d_{\e,b}(x))$ then $y' \in B_{\e'}'(x',\la' d_{\e',b'}(x'))$. The computation of Lemma \ref{subwhitney} (specifically inequality \eqref{key subwhitney}) shows that for any $\kappa > 0$ we can, by sufficiently decreasing $\la$, find $\la$ depending only on $\kappa$ and the given data such that if $y \in B_{\e}(x,\la d_{\e,b}(x))$ then $ |xy| \leq \kappa$. Plugging this into inequality \ref{arc Harnack} applied on $X_{\e',b'}'$ and using Lemma \ref{compute distance} on $X_{\e',b'}'$ together with the fact that $f$ is $H$-Lipschitz, we conclude that
\[
d_{\e',b'}(x',y') \leq C(e^{H\e'\kappa}-1)d_{\e',b'}(x'),
\]
with $C = C(\delta,K,\e',M)$. We then choose $\kappa$ close enough to $0$ that $ C(e^{H\e'\kappa}-1) < \la'$ in the above inequality and then choose $\la \leq \la '$ based on this value of $\kappa$. 

Now let $y,z \in B_{\e}(x,\la d_{\e,b}(x))$ be given. Then by Lemma \ref{subwhitney} we have 
\[
d_{\e,b}(y,z) \asymp_{C} d_{\e,b}(x)|yz|,
\]
with $C = C(\delta,K,\e,M)$. Meanwhile, the work of the previous paragraph shows that $y',z' \in B_{\e'}'(x',\la'd_{\e',b'}(x'))$ as well, which implies by a second application of Lemma \ref{subwhitney} that 
\[
d_{\e',b'}(y',z') \asymp_{C} d_{\e',b'}(x')|y'z'|,
\]
with $C = C(\delta,K,\e',M)$. Since $|y'z'| \leq H|yz|$, the desired inequality \eqref{controlled inequality} immediately follows with a constant $L$ depending only on the given data. This implies that $f$ is $\p$-Lipschitz with data $(L,\la)$ as desired.

We now return to assuming that $f: X \rightarrow X'$ is an $H$-biLipschitz homeomorphism. We will show that $f$ is $\eta$-quasim\"obius with the control function $\eta$ having the indicated form. By the comparison \eqref{estimate both} we have for $x,y \in X$, 
\[
d_{\e,b}(x,y) \asymp_{C} e^{-\e (x|y)_{b}}\min\{1,|xy|\},
\]
with $C = C(\delta,K,\e,M)$. Applying this to the cross-ratio of four distinct points $x,y,z,w \in X_{\e,b}$ and using Lemma \ref{cross difference lemma}, we obtain
\begin{equation}\label{first comparison cross}
[x,y,z,w]_{d_{\e,b}} \asymp_{C} e^{\e \langle x,y,z,w\rangle}\frac{\min\{1,|xz|\}\min\{1,|yw|\}}{\min\{1,|xy|\}\min\{1,|zw|\}},
\end{equation}
with $C = C(\delta,K,\e,M)$. Applying this same calculation to $X'_{\e',b'}$,  recalling that $x' = f(x)$ denotes images of points under $f$, we obtain that
\begin{equation}\label{second comparison cross}
[x',y',z',w']_{d_{\e',b'}} \asymp_{C} e^{\e'\langle x',y',z',w'\rangle}\frac{\min\{1,|x'z'|\}\min\{1,|y'w'|\}}{\min\{1,|x'y'|\}\min\{1,|z'w'|\}},
\end{equation}
with $C = C(\delta,K,\e',M)$.

Since $f$ is an $H$-biLipschitz map between geodesic $\delta$-hyperbolic spaces, by \cite[Theorem 4.4.1]{BS07} we have that $f$ roughly quasi-preserves the cross difference in the following sense: there are constants $C_{0} \geq 1$ and $c_{0} \geq 0$ depending only on $H$ and $\delta$ such that the cross-difference of the points $x,y,z,w$ compared to the image points $x',y',z',w'$ satisfies
\begin{equation}\label{PQ}
C_{0}^{-1}\langle x,y,z,w\rangle - c_{0} \leq \langle x',y',z',w'\rangle \leq C_{0}\langle x,y,z,w\rangle + c_{0},
\end{equation}
whenever $\langle x,y,z,w\rangle \geq 0$. We set $\beta = \max\{\frac{\e'}{\e},\frac{\e}{\e'}\}C_{0} \geq 1$. Then in the case $\langle x,y,z,w\rangle \geq 0$ we have
\begin{equation}\label{image cross difference inequality}
C^{-1} (e^{\e \langle x,y,z,w\rangle})^{\beta^{-1}} \leq e^{\e'\langle x',y',z',w'\rangle} \leq C (e^{\e \langle x,y,z,w\rangle})^{\beta},
\end{equation}
with $C = C(\delta,K,\e,\e',M,H)$. 

We set 
\[
G(x,y,z,w) = \frac{\min\{1,|xz|\}\min\{1,|yw|\}}{\min\{1,|xy|\}\min\{1,|zw|\}}.
\]
Let us first assume in addition to $\langle x,y,z,w \rangle \geq 0$ that we also have $G(x,y,z,w) \geq 1$. Then we have
\[
G(x,y,z,w)^{\beta^{-1}}\leq G(x,y,z,w) \leq G(x,y,z,w)^{\beta},
\]
which implies by inequality \eqref{image cross difference inequality} and the comparison \eqref{first comparison cross} that
\begin{equation}\label{improved cross}
C^{-1}[x,y,z,w]_{d_{\e,b}}^{\beta^{-1}} \leq e^{\e'\langle x',y',z',w'\rangle}\frac{\min\{1,|xz|\}\min\{1,|yw|\}}{\min\{1,|xy|\}\min\{1,|zw|\}} \leq  C[x,y,z,w]_{d_{\e,b}}^{\beta},
\end{equation}
with $C = C(\delta,K,\e,\e',M,H)$. Since $f$ is $H$-biLipschitz we have 
\begin{equation}\label{four bilip}
\frac{\min\{1,|x'z'|\}\min\{1,|y'w'|\}}{\min\{1,|x'y'|\}\min\{1,|z'w'|\}} \asymp_{H^{4}} \frac{\min\{1,|xz|\}\min\{1,|yw|\}}{\min\{1,|xy|\}\min\{1,|zw|\}},
\end{equation}
which when combined with inequalities \eqref{improved cross} and \eqref{second comparison cross} gives us
\begin{equation}\label{concluding cross}
C^{-1}[x,y,z,w]_{d_{\e,b}}^{\beta^{-1}} \leq [x',y',z',w']_{d_{\e',b'}} \leq C[x,y,z,w]_{d_{\e,b}}^{\beta},
\end{equation}
with $C$ depending only on the given data.

Let's now assume instead that $\langle x,y,z,w\rangle \geq 0$ and $G(x,y,z,w) \leq 1$. Then it must be the case that either $|xz| \leq 1$ or $|yw| \leq 1$. Since we have the equality $\langle x,y,z,w \rangle = \langle y,x,w,z \rangle$ when we interchange the roles of the pairs $(x,z)$ and $(y,w)$,  we can assume without loss of generality that $|xz| \leq 1$. Then by the triangle inequality we have $|zw| \doteq_{1} |xw|$ and therefore
\[
\langle x,y,z,w\rangle \doteq_{1} \frac{1}{2}(|yw|-|xy|-|xw|) \leq 0. 
\]
But since we've also assumed that $\langle x,y,z,w \rangle \geq 0$, it then follows that $0 \leq \langle x,y,z,w \rangle \leq 1$. Applying inequality \eqref{PQ} then gives that $|\langle x',y',z',w'\rangle | \leq c$, with $c = c(\delta,H)$ depending only on $\delta$ and $H$. Thus in this case, by combining the comparison \eqref{four bilip} with the comparisons \eqref{first comparison cross} and \eqref{second comparison cross}, we conclude simply that 
\begin{equation}\label{concluding second cross}
[x',y',z',w']_{d_{\e',b'}} \asymp_{C'} [x,y,z,w]_{d_{\e,b}},
\end{equation}
with $C'$ depending only on the given data. 

Now suppose that $\langle x,y,z,w\rangle \leq 0$. Then 
\[
\langle w,y,z,x\rangle = -\langle x,y,z,w\rangle \geq 0,
\]
and $G(w,y,z,x) = G(x,y,z,w)^{-1}$. We thus conclude that inequality \eqref{concluding cross} holds if $G(x,y,z,w) \leq 1$ and inequality \eqref{concluding second cross} holds if $G(x,y,z,w) \geq 1$, with $x$ and $w$ swapping places in these inequalities. But since $[w,y,z,x]_{d_{\e,b}} = [x,y,z,w]_{d_{\e,b}}^{-1}$ and $[w',y',z',x']_{d_{\e',b'}} = [x',y',z',w']_{d_{\e',b'}}^{-1}$, it follows from this that for $\langle x,y,z,w\rangle \leq 0$ and  $G(x,y,z,w) \leq 1$ we have
\begin{equation}\label{inverted concluding cross}
C^{-1}[x,y,z,w]_{d_{\e,b}}^{\beta} \leq [x',y',z',w']_{d_{\e',b'}} \leq C[x,y,z,w]_{d_{\e,b}}^{\beta^{-1}},
\end{equation}
and if $\langle x,y,z,w\rangle \leq 0$ and  $G(x,y,z,w) \geq 1$ then
\begin{equation}\label{inverting concluding second cross}
[x',y',z',w']_{d_{\e',b'}} \asymp_{C'} [x,y,z,w]_{d_{\e,b}},
\end{equation}
with $C$, $C'$, and $\beta$ the same as in \eqref{concluding cross} and \eqref{concluding second cross}. Combining \eqref{concluding cross}, \eqref{concluding second cross}, \eqref{inverted concluding cross}, and \eqref{inverting concluding second cross}, and noting that $\beta \geq 1$, we conclude that $f$ is $\eta$-quasim\"obius with $\eta(t) = C\max\{t^{\beta},t^{\beta^{-1}}\}$, $C = C(\delta,K,\e,\e',M,H)$. 
\end{proof}

\begin{rem}\label{specializations}
There are some special cases in which the conclusions of Proposition \ref{quasimobius uniform} can be improved. 

\begin{enumerate}
\item If $X = X'$ and $f: X \rightarrow X$ is the identity map then we can take $C_{0} = 1$ and $c_{0} = 0$ in \eqref{PQ}. This implies in particular that the control function $\eta$ has the form $\eta(t) = C\max\{t^{\frac{\e'}{\e}},t^{\frac{\e}{\e'}}\}$. This proves Theorem \ref{quasimobius uniformizations}. 

\item If $\omega_{b} \in X$, $\omega_{b'} \in X'$ and $f(\omega_{b}) = \omega_{b'}$ then an adaptation of the argument of \cite[Proposition 4.15]{BHK} shows that the map $f$ in Proposition \ref{quasimobius uniform} is $\eta$-quasisymmetric with $\eta$ depending only on $\delta$, $K$, $\e$, $\e'$, $M$, and $H$. In the case $f(\omega_{b}) \neq \omega_{b'}$ this argument still shows that the map $f$ is $\eta$-quasisymmetric, but the control function $\eta$ depends additionally on the distance $|f(\omega_{b})\omega_{b'}|$. 

\end{enumerate}
\end{rem}

Our next goal is to prove Theorem \ref{transfer lip}. For this purpose we will first show that the notions of being $\p$-Lipschitz and $\p$-biLipschitz are closed under composition. This does not require any assumptions on the metric spaces in question. 

\begin{prop}\label{composition true}
Let $f_{1}: (\Omega,d) \rightarrow (\Omega',d')$ and $f_{2}: (\Omega',d') \rightarrow (\Omega'',d'')$ be homeomorphisms of incomplete metric spaces such that $f_{i}$ is $\p$-Lipschitz with data $(L_{i},\la_{i})$, $i = 1,2$. Then $f_{2} \circ f_{1}$ is $\p$-Lipschitz with data $(L,\la)$ where $L = L_{2}L_{1}$ and $\la = \min\{\la_{1},L_{1}^{-1}\la_{2}\}$.    

Similarly if $f_{i}$ is $\p$-biLipschitz with data $(L_{i},\la_{i})$, $i = 1,2$, then  $f_{2} \circ f_{1}$ is $\p$-biLipschitz with data $(L,\la)$ where $L = L_{2}L_{1}$ and $\la = \min\{L_{2}^{-1}\la_{1},L_{1}^{-1}\la_{2}\}$.  
\end{prop}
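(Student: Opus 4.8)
The plan is to verify the two composition statements directly from Definition~\ref{def:controlled}, being careful about the shrinking of radii that occurs when we pass through two maps in sequence. The underlying principle is that the $\p$-Lipschitz inequality \eqref{controlled inequality} for $f_2 \circ f_1$ at a point $x$ requires us to first know the inequality for $f_1$ on a ball around $x$, and then the inequality for $f_2$ on a ball around $f_1(x)$; the second requires that the $f_1$-image of the ball around $x$ lands inside the ball around $f_1(x)$ on which the $f_2$-inequality is valid. Controlling this image is where the constants $L_1$ and $\la_2$ interact.

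First I would prove the $\p$-Lipschitz statement. Fix $x \in \Omega$ and set $x' = f_1(x)$. The key observation is that if $y \in B_d(x, \la_1 d_\Omega(x))$ then putting $z = x$ in \eqref{controlled inequality} for $f_1$ gives $d'(f_1(y), x') \le L_1 \tfrac{d_{\Omega'}'(x')}{d_\Omega(x)} d(x,y) < L_1 \la_1 d_{\Omega'}'(x')$, so $f_1$ maps $B_d(x,\la_1 d_\Omega(x))$ into $B_{d'}(x', L_1\la_1 d_{\Omega'}'(x'))$. Now if we further restrict $y$ to lie in $B_d(x, \la d_\Omega(x))$ with $\la = \min\{\la_1, L_1^{-1}\la_2\}$, then the image point $f_1(y)$ lies in $B_{d'}(x', \la_2 d_{\Omega'}'(x'))$ — because $L_1 \la \le \la_2$. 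Hence for any $y, z \in B_d(x,\la d_\Omega(x))$ both $f_1(y)$ and $f_1(z)$ lie in the ball around $x'$ on which \eqref{controlled inequality} for $f_2$ applies, and chaining the two inequalities gives
\[
\frac{d''(f_2(f_1(y)), f_2(f_1(z)))}{d_{\Omega''}''(f_2(x'))} \le L_2 \frac{d'(f_1(y), f_1(z))}{d_{\Omega'}'(x')} \le L_2 L_1 \frac{d(y,z)}{d_\Omega(x)},
\]
which is exactly the $\p$-Lipschitz inequality for $f_2 \circ f_1$ with data $(L_2 L_1, \la)$.

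For the $\p$-biLipschitz statement, I would apply the $\p$-Lipschitz result just proved both to $f_2 \circ f_1$ and to its inverse $f_1^{-1} \circ f_2^{-1}$. Since $f_i$ is $\p$-biLipschitz with data $(L_i,\la_i)$, the map $f_i^{-1}$ is also $\p$-Lipschitz with data $(L_i,\la_i)$. Applying the composition formula for $\p$-Lipschitz maps to $f_2 \circ f_1$ gives data $(L_2 L_1, \min\{\la_1, L_1^{-1}\la_2\})$, while applying it to $f_1^{-1} \circ f_2^{-1}$ (in that order, so the outer map is $f_1^{-1}$ with constant $L_1$ and the inner map is $f_2^{-1}$ with constant $L_2$) gives data $(L_1 L_2, \min\{\la_2, L_2^{-1}\la_1\})$. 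Taking the common refinement of the two radii, $\la = \min\{L_2^{-1}\la_1, L_1^{-1}\la_2\}$ (noting $L_2^{-1}\la_1 \le \la_1$ and $L_1^{-1}\la_2 \le \la_2$ since each $L_i \ge 1$), we get that both $f_2 \circ f_1$ and its inverse are $\p$-Lipschitz with data $(L_2 L_1, \la)$, which is the definition of $\p$-biLipschitz with data $(L,\la)$.

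The only subtlety — and the step I'd be most careful about — is the bookkeeping of which constant shrinks which radius: the factor $L_1^{-1}$ in front of $\la_2$ comes from the expansion of the ball under $f_1$ (a ball of radius $\la_1 d_\Omega(x)$ maps into a ball of radius as large as $L_1\la_1 d_{\Omega'}'(x')$), and in the biLipschitz case the extra factor $L_2^{-1}$ in front of $\la_1$ comes from the symmetric requirement that $f_2$ (with constant $L_2$) not expand the domain ball of $f_2^{-1}$ beyond where $f_1^{-1}$'s inequality holds. Everything else is a direct substitution into \eqref{controlled inequality}, and no hypotheses on the spaces $\Omega, \Omega', \Omega''$ are needed.
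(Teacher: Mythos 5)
Your proof is correct and follows essentially the same approach as the paper: apply the $f_1$ inequality with the center $x$ to see that $f_1$ maps the shrunken ball $B_d(x,\la d_\Omega(x))$ into $B_{d'}(f_1(x),\la_2 d'_{\Omega'}(f_1(x)))$, chain with the $f_2$ inequality, and handle the biLipschitz case by running the same argument for $f_1^{-1}\circ f_2^{-1}$ and taking the common radius. Your step of setting $z=x$ to control the image ball is in fact slightly more careful than the paper's displayed inequality (which, read literally for arbitrary $y,z$, would give a bound of $2\la_2$ rather than $\la_2$), but it is exactly what the paper's argument needs and intends.
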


\begin{proof}
Let $\la = \min\{\la_{1},L_{1}^{-1}\la_{2}\}$. Then for $y,z \in B_{d}(x,\la d_{\Omega}(x))$ we have 
\[
\frac{d'(f_{1}(y),f_{1}(z))}{d'_{\Omega'}(f_{1}(x))} \leq L_{1}\frac{d(y,z)}{d_{\Omega}(x)} < \la_{2}. 
\]
Thus we can apply the inequality \eqref{controlled inequality} for $f_{2}$ to $f_{1}(y)$ and $f_{1}(z)$ to get for $y,z \in B_{d}(x,\la d_{\Omega}(x))$,
\[
\frac{d''(f_{2}(f_{1}(y)),f_{2}(f_{1}(z)))}{d''_{\Omega''}(f_{2}(f_{1}(x)))} \leq L_{2} \frac{d'(f_{1}(y),f_{1}(z))}{d'_{\Omega'}(f_{1}(x))} \leq L_{1}L_{2} \frac{d(y,z)}{d_{\Omega}(x)}.
\]
Thus inequality \eqref{controlled inequality} holds for $f_{2} \circ f_{1}$ on $B_{d}(x,\la d_{\Omega}(x))$ with constant $L = L_{2}L_{1}$. The claim regarding $\p$-biLipschitz maps follows from applying this argument to the composition $f_{1}^{-1} \circ f_{2}^{-1} = (f_{2} \circ f_{1})^{-1}$ and noting that in this case we have $L_{i}^{-1} \leq 1$, $i = 1,2$.  
\end{proof}

\begin{proof}[Proof of Theorem \ref{transfer lip}]
We let $Y = (\Omega,k)$ and $Y' = (\Omega',k')$ denote the quasihyperbolizations of $\Omega$ and $\Omega'$ respectively. We will use the notation $d(x) = d_{\Omega}(x)$ for $x \in \Omega$ and $d'(x) = d_{\Omega'}(x)$ for $x \in \Omega'$. We first assume that $f: \Omega \rightarrow \Omega'$ is $\p$-Lipschitz with data $(L,\la)$. Since $Y$ is geodesic it suffices to prove that the induced map $f:Y \rightarrow Y'$ is locally $H$-Lipschitz, i.e., that for any $x \in Y$ there is an open neighborhood $U_{x}$ of $x$ on which $f$ is $H$-Lipschitz. We will take $U_{x} = B_{d}(x,\la d(x))$. Then by inequalities \eqref{controlled inequality} and \eqref{hyperbolize comparison} as well as inequality \eqref{elementary}, for $y \in U_{x}$ we have
\begin{align*}
k'(f(x),f(y)) &\leq 4A^{2}\log\left(1+ \frac{d'(f(x),f(y))}{\min\{d'(x),d'(y)\}}\right) \\
&\leq 4A^{2}\log\left(1+ L\frac{d(x,y)}{\min\{d(x),d(y)\}}\right) \\
&\leq 4A^{2}L\log\left(1+ \frac{d(x,y)}{\min\{d(x),d(y)\}}\right) \\
&\leq 4A^{2}L k(x,y). 
\end{align*}
We conclude that $f$ is $H$-Lipschitz with $H = 4A^{2}L$. 

We now assume that the induced map $f:Y \rightarrow Y'$ is $H$-Lipschitz for some $H \geq 0$. If $\Omega$ is bounded then we let $\omega \in \Omega$ be such that $d(\omega) = \sup_{x \in \Omega} d(x)$ and set $b(x) = k(x,\omega)$, while if $\Omega$ is unbounded then we let $b$ be a Busemann function on $Y$ based at the point $\omega \in \p Y$ corresponding to the equivalence class of all quasihyperbolic geodesic rays $\gamma$ in $\Omega$ with $\l_{d}(\gamma) = \infty$. We define $b' \in \hat{\mathcal{B}}(Y')$ similarly. Then $Y$ and $Y'$ are each $K$-roughly starlike from the basepoints of $b$ and $b'$ respectively with $K = K(A)$ by Proposition \ref{uniform starlike}.

By Theorem \ref{Gehring-Hayman} we can find $\e = \e(A)$ such that the densities $\rho_{\e,b}$ and $\rho_{\e,b'}$ are GH-densities on $Y$ and $Y'$ respectively with constant $M = 20$. Let $Y_{\e,b}$ and $Y_{\e,b'}'$ be the respective uniformizations of $Y$ and $Y'$. The first part of the proof of Proposition \ref{quasimobius uniform} implies that the induced map $f: Y_{\e,b} \rightarrow Y_{\e,b'}'$ is $\p$-Lipschitz with data $(L,\la)$, where $L$ and $\la$  depend only on $A$ and $H$ (since $\delta = \delta(A)$, $K = K(A)$, $\e = \e(A)$, and $M = 20$).  By Proposition \ref{uniformize quasi} the identity maps $\Omega \rightarrow Y_{\e,b}$ and $\Omega' \rightarrow Y_{\e,b'}'$ are each $\p$-biLipschitz with data $(L_{0},\la_{0})$ depending only on $A$ since $\e = \e(A)$ and $M = 20$. Since inverses of $\p$-biLipschitz maps are $\p$-biLipschitz and compositions of $\p$-Lipschitz maps are $\p$-Lipschitz (by Proposition \ref{composition true}) we conclude that $f: \Omega \rightarrow \Omega'$ is $\p$-Lipschitz with data $(L,\la)$ depending only on $A$ and $H$. 
\end{proof}

We can also prove Theorem \ref{homothety to mobius}. We use the same notation as in the proof of Theorem \ref{transfer lip}.

\begin{proof}[Proof of Theorem \ref{homothety to mobius}]
By Theorem \ref{transfer lip} there is an $H = H(A,L)$ such that the induced map $f: Y \rightarrow Y'$ is $H$-biLipschitz. By Proposition \ref{quasimobius uniform} the induced map $f: Y_{\e,b} \rightarrow Y_{\e,b'}'$ is $\eta_{0}$-quasim\"obius with $\eta_{0}$ depending only on $A$ and $H$ (and therefore only on $A$ and $L$). By Proposition \ref{uniformize quasi} the identity maps $\Omega \rightarrow Y_{\e,b}$ and $\Omega' \rightarrow Y_{\e,b'}'$ are $\eta_{1}$-quasisymmetric with $\eta_{1}$ depending only on $A$. Since quasisymmetric maps are quasim\"obius with quantitative control over the control function \cite[Theorem 3.2]{V85} and both composititions and inverses of quasim\"obius maps are quasim\"obius, it follows that $f: \Omega \rightarrow \Omega'$ is $\eta$-quasim\"obius with $\eta$ depending only on $A$ and $L$. 
\end{proof}

Our final task is to prove Theorem \ref{inversion theorem}. For all of the claims below we fix a proper geodesic $\delta$-hyperbolic space $X$ that is $K$-roughly starlike from some $z \in X$ and $\omega \in \p X$.  We fix a Busemann function $b$ based at $\omega$ that is chosen such that $b(z) = 0$ (this can always be done by adding a constant to $b$ if necessary).  We let $\e > 0$ be given such that the densities $\rho_{\e,z}$ and $\rho_{\e,b}$ are GH-densities for $X$ with constant $M$. By \cite[Theorem 1.4]{Bu20} the metric spaces $X_{\e,z}$ and $X_{\e,b}$ are each $A$-uniform with $A = A(\delta,K,\e,M)$. We will consider the cases of inversion and sphericalization separately. 

We start with the case of inversion. We recall that $X_{\e,z}^{\omega}$ denotes the inversion of $X_{\e,z}$ based at the point $\omega \in \p X_{\e,z}$ defined prior to Theorem \ref{inversion theorem}. We will need the following lemma. 

\begin{lem}\label{bounded ratio}
There is a constant $C = C(\delta,K,\e,M) \geq 1$ such that $\diam \, X_{\e,z} \asymp_{C} 1$. Furthermore we have
\[
\diam \, X_{\e,z} \leq C\diam \, \p X_{\e,z}. 
\]
\end{lem}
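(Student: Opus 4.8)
The plan is to derive both assertions from Lemma \ref{lem:estimate both} applied with $b = b_{z} \in \mathcal{D}(X)$, together with one application of the rough starlikeness of $X$ from $\omega$. Throughout, write $d_{\e,z} = d_{\e,b_{z}}$ and recall that for this choice $\omega_{b} = z \in X$, so $\partial_{\omega_{b}}X = \partial X$; thus Lemma \ref{lem:estimate both} gives, for all $x,y \in X \cup \partial X$,
\[
d_{\e,z}(x,y) \asymp_{C} e^{-\e(x|y)_{z}}\min\{1,|xy|\}, \qquad C = C(\delta,\e,M),
\]
and the completion $\bar{X}_{\e,z}$ is canonically identified with $X \cup \partial X$, so that $\partial X_{\e,z} \cong \partial X$. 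Since $X$ has nonempty Gromov boundary it is unbounded, and since $X$ is $K$-roughly starlike from a point of $\partial X$ the boundary $\partial X$ contains at least two points.

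First I would record the upper bound $\diam X_{\e,z} \leq C$: for all $x,y \in X$ one has $(x|y)_{z} \geq 0$ by the triangle inequality, so the displayed comparison forces $d_{\e,z}(x,y) \leq C$. The harder direction is a lower bound on $\diam \partial X_{\e,z}$, and this is where rough starlikeness enters. Applying $K$-rough starlikeness from $\omega$ to the point $z$ itself produces a geodesic line $\gamma \colon \R \to X$ starting from $\omega$ with $\dist(z,\gamma) \leq K$; let $\zeta \in \partial X$ be the other endpoint of $\gamma$, so $\zeta \neq \omega$ since $\gamma$ is a geodesic line. Viewing $\gamma$ as the edge $\omega\zeta$ of a geodesic triangle $z\omega\zeta$ and applying Lemma \ref{interior basepoint} in the form \eqref{rough product} (with $c(\delta) = 8\delta$), we obtain $(\omega|\zeta)_{z} \doteq_{8\delta} \dist(z,\gamma) \leq K$, hence $(\omega|\zeta)_{z} \leq K + 8\delta$. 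Since $|\omega\zeta| = \infty$ for the distinct boundary points $\omega,\zeta$, Lemma \ref{lem:estimate both} then yields
\[
\diam \partial X_{\e,z} \geq d_{\e,z}(\omega,\zeta) \geq C^{-1} e^{-\e(\omega|\zeta)_{z}} \geq C^{-1} e^{-\e(K+8\delta)}.
\]

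It then remains only to assemble the pieces. Since $\diam \bar{X}_{\e,z} = \diam X_{\e,z}$ and $\partial X_{\e,z} \subseteq \bar{X}_{\e,z}$, the bound above gives $\diam X_{\e,z} \geq \diam \partial X_{\e,z} \geq C^{-1}e^{-\e(K+8\delta)}$, which together with $\diam X_{\e,z} \leq C$ proves $\diam X_{\e,z} \asymp_{C'} 1$ for $C' = C'(\delta,K,\e,M)$. Finally, $\diam X_{\e,z} \leq C \leq C \cdot C e^{\e(K+8\delta)}\,\diam \partial X_{\e,z}$, which is the second claim with a constant depending only on $\delta,K,\e,M$.

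The only real subtlety is that the constant must be quantitative in $\delta$, $K$, $\e$, $M$; this rules out merely invoking $S(z) < \infty$ via Lemma \ref{two points} to see that $\diam \partial X_{\e,z} > 0$. The role of the rough starlikeness from $\omega$ is precisely to exhibit a \emph{concrete} pair of boundary points $\omega,\zeta$ realized by a geodesic line passing within $K$ of $z$, which converts the qualitative finiteness of their Gromov product into the explicit estimate $(\omega|\zeta)_{z} \leq K + 8\delta$. Everything else is a direct consequence of Lemma \ref{lem:estimate both} and the nonnegativity of Gromov products based at an interior point.
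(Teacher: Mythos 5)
Your proof is correct, and it takes a genuinely different (and arguably cleaner) route for the lower bound. Both proofs start the same way: invoke $K$-rough starlikeness from $\omega$ to obtain a geodesic line $\gamma$ starting from $\omega$ that passes within $K$ of $z$, and use the two ideal endpoints of $\gamma$ as the witnesses for the diameter bounds. The difference is in how the lower bound on the $d_{\e,z}$-distance between the two endpoints is extracted. The paper works with the curve $\gamma$ itself as an $A$-uniform curve in $X_{\e,z}$: it applies Lemma \ref{other height} to relate $d_{\e,z}(\xi,\omega)$ to $\sup_t d_{\e,z}(\gamma(t))$, then Lemma \ref{compute distance} to replace $d_{\e,z}$ by $\rho_{\e,z}$, then the Harnack inequality to compare $\rho_{\e,z}(\gamma(0))$ with $\rho_{\e,z}(z)=1$. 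You instead go directly through the Gromov product: Lemma \ref{interior basepoint} converts $\dist(z,\gamma)\leq K$ into $(\omega|\zeta)_{z}\leq K+8\delta$, and the lower half of the comparison in Lemma \ref{lem:estimate both} (with the convention $|\omega\zeta|=\infty$ for distinct boundary points) immediately gives $d_{\e,z}(\omega,\zeta)\geq C^{-1}e^{-\e(K+8\delta)}$. For the upper bound the paper cites the direct integral estimate $\diam X_{\e,z}\leq 2\e^{-1}$ from \cite[(4.3)]{BHK}, whereas you again read it off Lemma \ref{lem:estimate both} using $(x|y)_{z}\geq 0$; your constant picks up a harmless extra dependence on $M$, but this is absorbed into the final constant anyway. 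You also transfer the lower bound to $\diam X_{\e,z}$ via $\diam X_{\e,z}=\diam\bar X_{\e,z}\geq\diam\p X_{\e,z}$, while the paper approximates $\omega$ and $\xi$ by interior points $\gamma(\pm t)$; both work. Your version is more uniform in leaning entirely on Lemma \ref{lem:estimate both}, at the cost of relying on the identification $\p X_{\e,z}\cong\p X$ and the extension of the comparison to boundary points, which the paper's route avoids. All the steps you take are quantitative in $\delta,K,\e,M$ as required, and the explanation at the end of why rough starlikeness (rather than merely Lemma \ref{two points}) is needed is exactly the right observation.
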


\begin{proof}
By a simple calculation \cite[(4.3)]{BHK} we have $\diam \, X_{\e,z} \leq 2\e^{-1}$. On the other hand, since $X$ is $K$-roughly starlike from $\omega$ we can find a geodesic line $\gamma: \R \rightarrow X$ starting from $\omega$ that is parametrized such that $|z\gamma(0)| \leq K$. Let $\xi \in \p X$ be the other endpoint of $\gamma$. Then, since $X_{\e,z}$ is $A$-uniform with $A = A(\delta,K,\e,M)$, by Lemma  \ref{other height} and Lemma \ref{compute distance} we have
\[
d_{\e,z}(\xi,\omega) \asymp_{C} \sup_{t \in \R} d_{\e,z}(\gamma(t))  \asymp_{C} \sup_{t \in \R} \rho_{\e,z}(\gamma(t)).
\]
with $C = C(\delta,K,\e,M)$. By the Harnack inequality \eqref{Harnack} we conclude that
\[
d_{\e,z}(\xi,\omega) \geq C^{-1}\rho_{\e,z}(\gamma(0)) \asymp_{e^{\e K}} \rho_{\e,z}(z) = 1. 
\]
Thus $d_{\e,z}(\xi,\omega) \geq C^{-1}$, where $C = C(\delta,K,\e,M)$. By continuity it follows that 
\[
d_{\e,z}(\gamma(t),\gamma(-t)) \geq \frac{1}{2}C^{-1}
\]
for all sufficiently large $t$. The lower bound $\diam \, X_{\e,z} \geq \frac{1}{2}C^{-1}$ follows. This gives the first claim of the lemma. The second claim of the lemma follows since 
\[
C^{-1} \leq d_{\e,z}(\xi,\omega) \leq \diam \, \p X_{\e,z}.
\]  
\end{proof}

\begin{prop}\label{prop inversion}
The metric space $X_{\e,z}^{\omega}$ is $A'$-uniform with $A' = A'(\delta,K,\e,M)$. The identity map $X_{\e,b} \rightarrow X_{\e,z}^{\omega}$ is $\p$-biLipschitz with data $(L,\la)$ and $\eta$-quasisymmetric with $L$, $\la$, and $\eta$ depending only on $\delta$, $K$, $\e$, and $M$.
\end{prop}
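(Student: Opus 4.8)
The argument splits into the uniformity of $X_{\e,z}^{\omega}$ and the properties of the identity map $f\colon X_{\e,b}\to X_{\e,z}^{\omega}$, with essentially all of the work in the latter. For uniformity, recall $X_{\e,z}$ is $A$-uniform with $A=A(\delta,K,\e,M)$, while Lemma \ref{bounded ratio} gives $\diam X_{\e,z}\asymp1$ and $\diam X_{\e,z}\le C\diam\p X_{\e,z}$ with $C=C(\delta,K,\e,M)$. Under exactly such hypotheses \cite{BHX08} shows that the inversion of a uniform metric space about a point of its metric boundary is again uniform, with the new constant quantitative in the old one and in $\diam\Omega/\diam\p\Omega$; hence $X_{\e,z}^{\omega}$ is $A'$-uniform with $A'=A'(\delta,K,\e,M)$.

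For the $\p$-biLipschitz claim I would verify the hypotheses of Lemma \ref{weak to normal}, i.e. produce $\la,L$ depending only on $\delta,K,\e,M$ such that \eqref{later controlled comparison} holds for $y,w\in B_{d_{\e,b}}(x,\la\,d_{X_{\e,b}}(x))$ and the ball inclusion \eqref{controlled inclusion} holds. On the $X_{\e,b}$ side one has $d_{X_{\e,b}}(x)\asymp\rho_{\e,b}(x)$ (Lemma \ref{compute distance}) and, for $y,w$ in a small $d_{\e,b}$-ball about $x$, $d_{\e,b}(y,w)\asymp\rho_{\e,b}(x)\,|yw|$ with $|yw|$ the distance in $X$ (Lemma \ref{subwhitney}). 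I claim the parallel estimates $d_{X_{\e,z}^{\omega}}(x)\asymp\rho_{\e,b}(x)$ and $d_{\e,z}^{\omega}(y,w)\asymp\rho_{\e,b}(x)\,|yw|$ hold; granting these, \eqref{later controlled comparison} is immediate since both sides are $\asymp|yw|$, and \eqref{controlled inclusion} follows from the one-sided length/Harnack bounds exactly as in the proof of Proposition \ref{quasimobius uniform}. To prove the $X_{\e,z}^{\omega}$-side estimates I would use $d_{\e,z}^{\omega}\asymp_{4}i^{\omega}$ (\cite[Lemma 3.2]{BHX08}), the estimate $d_{\e,z}(x,\omega)\asymp e^{-\e(x|\omega)_{z}}$ (Lemma \ref{lem:estimate both}, with $\min\{1,|x\omega|\}=1$), the rough identity $(x|\omega)_{z}\doteq_{c(\delta)}\tfrac12(d(x,z)-b(x))$ (a direct computation using \eqref{extended definition}, \eqref{sequence approximation}, \eqref{difference constant} and $b(z)=0$), Lemma \ref{subwhitney} and Lemma \ref{compute distance} applied to $X_{\e,z}$ (after shrinking $\la$ so $y,w$ also lie in a controlled $d_{\e,z}$-ball about $x$), and the key distance-to-boundary identity
\[
d_{X_{\e,z}^{\omega}}(x)=\dist_{d_{\e,z}^{\omega}}(x,\p_{\omega}X)\asymp\frac{1}{d_{\e,z}(x,\omega)}\inf_{\xi\in\p_{\omega}X}\frac{d_{\e,z}(x,\xi)}{d_{\e,z}(\xi,\omega)}\asymp\frac{\min\{d_{\e,z}(x,\omega),\,\dist_{d_{\e,z}}(x,\p_{\omega}X)\}}{d_{\e,z}(x,\omega)^{2}},
\]
where the middle step is \cite[Lemma 3.2]{BHX08} and the last uses $d_{\e,z}(\xi,\omega)\le\diam X_{\e,z}\asymp1$ (Lemma \ref{bounded ratio}) and the triangle inequality. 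Since $\p X_{\e,z}=\p_{\omega}X\cup\{\omega\}$, the numerator equals $d_{X_{\e,z}}(x)\asymp\rho_{\e,z}(x)=e^{-\e d(x,z)}$ (Lemma \ref{compute distance}), so $d_{X_{\e,z}^{\omega}}(x)\asymp e^{-\e d(x,z)+2\e(x|\omega)_{z}}\asymp e^{-\e b(x)}=\rho_{\e,b}(x)$; the same ingredients give $d_{\e,z}^{\omega}(y,w)\asymp\rho_{\e,z}(x)|yw|/d_{\e,z}(x,\omega)^{2}\asymp\rho_{\e,b}(x)|yw|$. This establishes that $f$ is $\p$-biLipschitz with data depending only on $\delta,K,\e,M$.

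For quasisymmetry, $f$ being $\p$-biLipschitz between the $A$-uniform space $X_{\e,b}$ and the $A'$-uniform space $X_{\e,z}^{\omega}$, Theorem \ref{homothety to mobius} makes $f$ $\eta_{0}$-quasim\"obius with $\eta_{0}=\eta_{0}(A,A')$. Both spaces are unbounded, and $f$ preserves divergence to infinity in both directions: for $p\in X$ and $\{x_{n}\}\subset X$, $d_{\e,b}(p,x_{n})\to\infty$ iff $(x_{n}|\omega)_{z}\to\infty$ (Lemma \ref{go to infinity}), iff $x_{n}\to\omega$ in $\overline{X_{\e,z}}$, iff $d_{\e,z}^{\omega}(p,x_{n})\to\infty$ (inversion carries the inversion point $\omega$ to the point at infinity of $X_{\e,z}^{\omega}$, \cite[Lemma 3.2]{BHX08}). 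A quasim\"obius homeomorphism between unbounded metric spaces with this property is quasisymmetric with quantitative control on its control function \cite{V85}; hence $f$ is $\eta$-quasisymmetric with $\eta$ depending only on $\delta,K,\e,M$.

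The main obstacle is the key distance-to-boundary identity and the recognition that its numerator is $d_{X_{\e,z}}(x)$: this is where the boundedness estimate $\diam X_{\e,z}\asymp\diam\p X_{\e,z}$ of Lemma \ref{bounded ratio} is essential, both for bounding $d_{\e,z}(\xi,\omega)$ and for forcing the two competing expressions for $d_{X_{\e,z}^{\omega}}(x)$ to agree up to a bounded factor. Everything else is careful bookkeeping with the estimates of Sections \ref{sec:hyperbolic}--\ref{sec:uniformize quasihyperbolic}, largely paralleling the proofs of Propositions \ref{uniformize quasi} and \ref{quasimobius uniform}.
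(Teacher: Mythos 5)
Your proof is correct in outcome but takes a genuinely different route from the paper's for the $\p$-biLipschitz part. The paper establishes the $\p$-biLipschitz property indirectly: it passes to the quasihyperbolic metrics, invokes \cite[Theorem 4.7]{BHX08} to get that the identity $X_{\e,z}\to X_{\e,z}^{\omega}$ is biLipschitz in quasihyperbolic metrics (with constant controlled via $\phi(X_{\e,z})$ and Lemma \ref{bounded ratio}), composes with the biLipschitz-in-quasihyperbolic identity $X_{\e,b}\to X_{\e,z}$ coming from Proposition \ref{quasimobius uniform} and Theorem \ref{transfer lip}, and then translates back with Theorems \ref{homothety to mobius} and \ref{transfer lip}. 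You instead work entirely at the level of the uniformized metrics, verifying Lemma \ref{weak to normal} directly by computing $d_{X_{\e,z}^{\omega}}(x)\asymp\rho_{\e,b}(x)$ and $d_{\e,z}^{\omega}(y,w)\asymp\rho_{\e,b}(x)|yw|$ via the inversion formula, the estimate \eqref{estimate both}, the rough identity $(x|\omega)_z\doteq\frac12(d(x,z)-b(x))$, and Lemma \ref{bounded ratio}. Your approach yields more explicit quantitative information about the inverted space and avoids \cite[Theorem 4.7]{BHX08} entirely; the paper's approach is shorter because it reuses that result as a black box. The quasisymmetry upgrade via \cite[Theorem 3.10]{V85} and Lemma \ref{go to infinity} is identical in both.

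Two small points worth tightening. First, your two-sided comparison for $\dist_{d_{\e,z}^{\omega}}(x,\p_{\omega}X)$ requires, besides the upper bound $d_{\e,z}(\xi,\omega)\le\diam X_{\e,z}$, also the existence of some $\xi\in\p_{\omega}X$ with $d_{\e,z}(\xi,\omega)\gtrsim 1$; this follows from $\diam\p X_{\e,z}\gtrsim 1$ (Lemma \ref{bounded ratio}) and the triangle inequality, but you should say so explicitly since it is the only place both halves of Lemma \ref{bounded ratio} are needed. Second, the paper's citation of \cite[Theorem 5.1(a)]{BHX08} gives $A'$ depending only on the uniformity constant $A$, with no dependence on $\diam\Omega/\diam\p\Omega$; your statement that the new constant also depends on that ratio is not what the reference asserts, though it is harmless here because Lemma \ref{bounded ratio} bounds the ratio anyway.
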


\begin{proof}
By \cite[Theorem 5.1(a)]{BHX08} the metric space $X_{\e,z}^{\omega}$ is $A'$-uniform with $A'$ depending only on the uniformity parameter$ A$ of $X_{\e,z}$. The identity map $X_{\e,z} \rightarrow X_{\e,b}$ is $\p$-biLipschitz with data depending only on $\delta$, $K$, $\e$, and $M$ by Proposition \ref{quasimobius uniform}, hence is $H$-biLipschitz in the quasihyperbolic metrics on these spaces with $H=H(\delta,K,\e,M)$ by Theorem \ref{transfer lip}. By \cite[Theorem 4.7]{BHX08} the identity map $X_{\e,z}\rightarrow X_{\e,z}^{\omega}$  is $H'$-biLipschitz in the quasihyperbolic metrics  with $H'$ depending only on the uniformity parameter $A = A(\delta,K,\e,M)$ of $X_{\e,z}$ and an upper bound on the ratio $\phi(X_{\e,z})$ (see \eqref{tightness}) of the diameter of $X_{\e,z}$ to the diameter of $\p X_{\e,z}$. By Lemma \ref{bounded ratio} this ratio is bounded above in terms of $\delta,K,\e,M$, so it follows that $H' = H'(\delta,K,\e,M)$. 

We thus conclude that there is a constant $H'' = H''(\delta,K,\e,M)$ such that the identity map $X_{\e,b} \rightarrow X_{\e,z}^{\omega}$ is $H''$-biLipschitz in the quasihyperbolic metrics on these uniform spaces. By Theorems \ref{homothety to mobius} and \ref{transfer lip} it then follows that this identity map is $\p$-biLipschitz with data $(L,\la)$ and $\eta$-quasim\"obius with $L$, $\la$, and $\eta$ depending only on $\delta$, $K$, $\e$, and $M$. To complete the proof we will show that the identity map $X_{\e,b} \rightarrow X_{\e,z}^{\omega}$ is actually $\eta$-quasisymmetric with the same control function $\eta$. Since both $X_{\e,b}$ and $X_{\e,z}^{\omega}$ are unbounded, it suffices by \cite[Theorem 3.10]{V85} to show that if $\{x_{n}\}$ is any sequence in $X$ then $d_{\e,b}(z,x_{n}) \rightarrow \infty$ if and only if $d^{\omega}_{\e,z}(z,x_{n}) \rightarrow \infty$. 

We recall from the definition \eqref{define inversion} of the inversion of $X_{\e,z}$ based at $\omega$ that we have for any $x,y \in X_{\e,z}$, 
\[
d^{\omega}_{\e,z}(x,y) \asymp_{4} \frac{d_{\e,z}(x,y)}{d_{\e,z}(x,\omega)d_{\e,z}(y,\omega)}.
\]
From this comparison we see that $d^{\omega}_{\e,z}(z,x_{n}) \rightarrow \infty$ if and only if $d_{\e,z}(x_{n},\omega) \rightarrow 0$. The comparison \eqref{estimate both} shows that this happens if and only if $(x_{n}|\omega)_{z} \rightarrow \infty$. By Lemma \ref{go to infinity} this occurs if and only if $d_{\e,b}(z,x_{n}) \rightarrow \infty$. This completes the proof. 
\end{proof}

In our final proposition we handle the sphericalization side of Theorem \ref{inversion theorem}. Theorem \ref{inversion theorem} then follows by combining Propositions \ref{prop inversion} and \ref{prop sphericalization}. We recall that $\hat{X}_{\e,b}^{z}$ denotes the sphericalization of $X_{\e,b}$ based at the point $z \in X_{\e,b}$ defined prior to Theorem \ref{inversion theorem}. 

\begin{prop}\label{prop sphericalization}
The metric space $\hat{X}_{\e,b}^{z}$ is $A'$-uniform with $A' = A'(\delta,K,\e,M)$. The identity map $X_{\e,z} \rightarrow \hat{X}_{\e,b}^{z}$ is $\p$-biLipschitz with data $(L,\la)$ and $\eta$-quasisymmetric with $L$, $\la$, and $\eta$ depending only on $\delta$, $K$, $\e$, and $M$.
\end{prop}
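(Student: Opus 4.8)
The plan is to follow the proof of Proposition~\ref{prop inversion} step for step, substituting the sphericalization results of \cite{BHX08} for the inversion results used there. First I would record the uniformity statement: since $X_{\e,b}$ is $A$-uniform with $A=A(\delta,K,\e,M)$ by \cite[Theorem 1.4]{Bu20}, and sphericalization preserves uniformity with a constant depending only on the input uniformity parameter by \cite[Theorem 5.1(b)]{BHX08}, the space $\hat{X}_{\e,b}^{z}$ is $A'$-uniform with $A'=A'(\delta,K,\e,M)$.

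The core of the argument is to compare $X_{\e,z}$ and $\hat{X}_{\e,b}^{z}$ through their quasihyperbolizations. By Proposition~\ref{quasimobius uniform} applied to the identity map $X\to X$ with $\e=\e'$ (so $H=1$), the identity map $X_{\e,z}\to X_{\e,b}$ is $\p$-biLipschitz with data depending only on $\delta,K,\e,M$, hence is $H$-biLipschitz in the quasihyperbolic metrics with $H=H(\delta,K,\e,M)$ by Theorem~\ref{transfer lip}. By \cite[Theorem 4.7]{BHX08} (in its form for sphericalization) the identity map $X_{\e,b}\to\hat{X}_{\e,b}^{z}$ is biLipschitz in the quasihyperbolic metrics with constant depending only on the uniformity parameter $A=A(\delta,K,\e,M)$ of $X_{\e,b}$; unlike the inversion case of Proposition~\ref{prop inversion}, here no control of a diameter ratio should be needed, since the unit segment attached in the sphericalization construction supplies a canonical amount of room (and if such a bound is nevertheless required, it follows by an argument parallel to Lemma~\ref{bounded ratio}). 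Composing, the identity map $X_{\e,z}\to\hat{X}_{\e,b}^{z}$ is $H''$-biLipschitz in the quasihyperbolic metrics on these uniform spaces with $H''=H''(\delta,K,\e,M)$.

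Given this quasihyperbolic biLipschitz bound, Theorems~\ref{transfer lip} and \ref{homothety to mobius} immediately yield that the identity map $X_{\e,z}\to\hat{X}_{\e,b}^{z}$ is $\p$-biLipschitz with data $(L,\la)$ and $\eta$-quasim\"obius, with $L,\la,\eta$ depending only on $\delta,K,\e,M$. It then remains to upgrade quasim\"obius to quasisymmetric. The difference from Proposition~\ref{prop inversion} is that here both spaces are bounded: $\diam X_{\e,z}\le 2\e^{-1}$ by \cite[(4.3)]{BHK}, and $\diam\hat{X}_{\e,b}^{z}\le 1$ by construction. Consequently the bounded case of \cite[Theorem 3.10]{V85} applies and shows that an $\eta$-quasim\"obius homeomorphism between these bounded spaces is $\eta'$-quasisymmetric, with $\eta'$ quantitative in $\eta$, hence depending only on $\delta,K,\e,M$. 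If one prefers to invoke \cite[Theorem 3.10]{V85} in the form that uses the completions, one notes that the identity map extends to a homeomorphism of completions $\bar{X}_{\e,z}\cong X\cup\p X\cong\overline{\hat{X}_{\e,b}^{z}}$, the point added ``at infinity'' by the sphericalization being identified with $\omega$ by the argument of Lemma~\ref{go to infinity}. Combined with Proposition~\ref{prop inversion}, this completes the proof of Theorem~\ref{inversion theorem}.

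The step I expect to be the main obstacle is the middle one: importing from \cite{BHX08} the biLipschitz invariance of the quasihyperbolic metric under sphericalization with a constant depending only on the uniformity parameter, and in particular confirming that no uncontrolled auxiliary quantity (a diameter ratio, say) enters the estimate --- this is the analogue of the role played by Lemma~\ref{bounded ratio} in the inversion case. Once that is pinned down, the remaining steps are direct quotations of results already established, assembled exactly as in the proof of Proposition~\ref{prop inversion}.
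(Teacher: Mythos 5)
The early steps of your proposal track the paper's argument: using the fact that sphericalization preserves uniformity (the paper cites \cite[Theorem 5.5(a)]{BHX08}, whose constant actually depends on a bound for $d_{\e,b}(z)$, supplied by Lemma~\ref{compute distance} and the normalization $b(z)=0$ rather than being automatic), combining Proposition~\ref{quasimobius uniform} with Theorem~\ref{transfer lip} to get a quasihyperbolic biLipschitz bound for $X_{\e,z}\to X_{\e,b}$, and then invoking the sphericalization analogue of \cite[Theorem 4.7]{BHX08} (the paper uses \cite[Theorem 4.12]{BHX08}, which applies because the augmented space $X_{\e,b}\cup_{z\sim 0}[0,1]$ is unbounded, confirming your intuition that no diameter-ratio control is needed). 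Composing and applying Theorems~\ref{transfer lip} and \ref{homothety to mobius} then gives the $\p$-biLipschitz and quasim\"obius conclusions, as in the paper. So far, same route.

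The genuine gap is the final upgrade from quasim\"obius to quasisymmetric. You claim that because $X_{\e,z}$ and $\hat{X}_{\e,b}^{z}$ are both bounded, an $\eta$-quasim\"obius homeomorphism between them is automatically $\eta'$-quasisymmetric with $\eta'$ quantitative in $\eta$. This is false. Consider a one-parameter family of M\"obius self-maps of the unit disk that translate hyperbolically towards the boundary: each is $1$-quasim\"obius, both source and target are the same bounded space, but the quasisymmetry control degenerates as the family escapes to infinity. So ``both spaces bounded'' alone gives no quantitative upgrade. The correct criterion (V\"ais\"al\"a \cite[Theorem 3.12]{V85}, which the paper invokes, not Theorem 3.10) requires exhibiting a triple of points $x_{1},x_{2},x_{3}$ whose pairwise distances are a definite fraction $\kappa$ of the diameter in \emph{both} metrics, with $\kappa$ controlled by the given data. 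The paper spends the second half of the proof constructing such a triple: it picks a geodesic line $\gamma$ through a $K$-neighborhood of $z$ from $\omega$, sets $x_{1}=\gamma(0)$, $x_{2}=\gamma(t)$, $x_{3}=\gamma(-t)$ for some $t\geq 1$, and verifies the lower bounds for $d_{\e,z}$ via \eqref{estimate both} and for $\hat{d}_{\e,b}^{z}$ via the explicit sphericalization formula \eqref{sphericalization formula}, together with Lemma~\ref{bounded ratio} and the $\diam\,\hat{X}_{\e,b}^{z}\asymp_{4}1$ bound. None of that is in your proposal, and it is where the bulk of the remaining work lies. Your alternative phrasing via completions and Lemma~\ref{go to infinity} does not help either, because the obstacle is not topological --- it is the lack of a quantitative three-point separation estimate.
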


\begin{proof}
 By \cite[Theorem 5.5(a)]{BHX08} the metric space $\hat{X}_{\e,b}^{z}$ is $A'$-uniform with $A'$ depending only on $A$ and an upper bound on $d_{\e,b}(z)$. Since by Lemma \ref{compute distance},
\[
d_{\e,b}(z) \asymp_{C} e^{-\e b(z)} = 1,
\]
with $C = C(\delta,K,\e,M)$, we conclude that $\hat{X}_{\e,b}^{z}$ is $A'$-uniform with $A' = A'(\delta,K,\e,M)$. The identity map $X_{\e,b} \rightarrow X_{\e,z}$ is $\p$-biLipschitz with data depending only on $\delta$, $K$, $\e$, and $M$ by Proposition \ref{quasimobius uniform}, hence is $H$-biLipschitz in the quasihyperbolic metrics on these spaces with $H=H(\delta,K,\e,M)$ by Theorem \ref{transfer lip}. 

Viewing sphericalization as a special case of inversion as we did prior to the statement of Theorem \ref{inversion theorem}, \cite[Theorem 4.12]{BHX08} shows that the identity map $X_{\e,b} \rightarrow \hat{X}_{\e,b}^{z}$ is $H'$-biLipschitz in the quasihyperbolic metrics with $H'$ depending only on the uniformity parameter $A=A(\delta,K,\e,M)$ of $X_{\e,b}$; we emphasize for the purposes of applying the theorem in the reference that the augmented metric space $X_{\e,b} \cup_{z \sim 0} [0,1]$ to which we are applying it is unbounded. We thus conclude as in the case of Proposition \ref{prop inversion} that the identity map $X_{\e,z} \rightarrow \hat{X}_{\e,b}^{z}$ is $H''$-biLipschitz in the hyperbolic metrics for $H'' = H''(\delta,K,\e,M)$. Therefore by combining Theorems \ref{homothety to mobius} and \ref{transfer lip} we conclude that the identity map $X_{\e,z} \rightarrow \hat{X}_{\e,b}^{z}$ is $\p$-biLipschitz with data $(L,\la)$ and $\eta$-quasim\"obius with $L$, $\la$, and $\eta$ depending only on $\delta$, $K$, $\e$, and $M$.

Finally we must upgrade the identity map $X_{\e,z} \rightarrow \hat{X}_{\e,b}^{z}$ from being $\eta$-quasim\"obius to being $\eta'$-quasisymmetric with $\eta'$ depending only on $\delta$, $K$, $\e$, and $M$. We will employ a criterion due to V\"ais\"al\"a \cite[Theorem 3.12]{V85} which shows that it suffices to find a constant $\kappa = \kappa(\delta,K,\e,M) >0$ and a triple of points $x_{i} \in X_{\e,z}$, $i =1,2,3$, such that we have
\[
d_{\e,z}(x_{i},x_{j}) \geq \kappa \, \diam\, X_{\e,z},
\]
for $i \neq j$, and 
\[
\hat{d}_{\e,b}^{z}(x_{i},x_{j}) \geq \kappa\,\diam\, \hat{X}_{\e,b}^{z},
\]
for $i \neq j$. Since $\diam\, X_{\e,z} \asymp_{C} 1$ with $C = C(\delta,K,\e,M)$ by Lemma \ref{bounded ratio} and $\diam\, \hat{X}_{\e,b}^{z} \asymp_{4} 1$ by \cite[Section 3.B]{BHX08}, it suffices to produce a triple of points $x_{i} \in X$ and $\kappa > 0$ such that for $i \neq j$, 
\begin{equation}\label{sphericalization estimates}
\min\{d_{\e,z}(x_{i},x_{j}), \hat{d}_{\e,b}^{z}(x_{i},x_{j})\} \geq \kappa
\end{equation}
As in the proof of Lemma \ref{bounded ratio}, we let $\gamma: \R \rightarrow X$ be a geodesic line starting from $\omega$ such that $\dist(\gamma(0),z) \leq K$. We will set $x_{1} = \gamma(0)$, $x_{2} = \gamma(t)$, and $x_{3} = \gamma(-t)$ for a sufficiently large $t \geq 0$. 

We take $t \geq 1$ so that $|x_{i}x_{j}| \geq 1$ for $i \neq j$. The triangle inequality shows that we have $(x|y)_{z} \doteq_{K} (x|y)_{\gamma(0)}$ for any $x,y \in X$. The comparison \eqref{estimate both} then gives 
\[
d_{\e,z}(\gamma(0),\gamma(t)) \asymp_{C} e^{-\e (\gamma(0)|\gamma(t))_{\gamma(0)}} = 1,
\]
with $C = C(\delta,K,\e,M)$. Similar calculations give that $d_{\e,z}(\gamma(0),\gamma(-t)) \asymp_{C} 1$ and $d_{\e,z}(\gamma(t),\gamma(-t)) \asymp_{C} 1$ with the same constant $C$. This takes care of the estimates corresponding to $d_{\e,z}$ in \eqref{sphericalization estimates}. 

The distance $\hat{d}_{\e,b}^{z}$ in the sphericalization has the form for $x,y \in X$, 
\begin{equation}\label{sphericalization formula}
\hat{d}_{\e,b}^{z}(x,y) \asymp_{4} \frac{d_{\e,b}(x,y)}{(1+d_{\e,b}(x,z))(1+d_{\e,b}(y,z))}.
\end{equation}
Since $b$ is $1$-Lipschitz we have $(x|z)_{b} \doteq_{K} (x|\gamma(0))_{b}$ for any $x \in X$. Since $\gamma$ is a geodesic starting from $\omega$ we have by \cite[Lemma 2.6]{Bu20} that for $t \in \R$, 
\[
b(\gamma(t)) \doteq_{c(\delta)} t+ b(\gamma(0)) \doteq_{K} t+ b(z) = t.
\]
 The comparison \eqref{estimate both} then yields $d_{\e,b}(\gamma(0),z) \asymp_{C} 1$, $d_{\e,b}(\gamma(t),z) \asymp_{C} e^{-\e t}$, and $d_{\e,b}(\gamma(-t),z) \asymp_{C} e^{\e t}$. By similar calculations we obtain that $d_{\e,b}(\gamma(0),\gamma(t)) \asymp_{C} 1$, $d_{\e,b}(\gamma(0),\gamma(-t)) \asymp_{C} e^{\e t}$, and $d_{\e,b}(\gamma(t),\gamma(-t)) \asymp_{C} e^{\e t}$. Plugging these comparisons into \eqref{sphericalization formula} gives the following three inequalities, all with constants $C = C(\delta,K,\e,M)$ (recall that $t \geq 1$), 
\[
\hat{d}_{\e,b}^{z}(\gamma(0),\gamma(t)) \geq \frac{C^{-1}}{(1+C)(1+Ce^{-\e t})} \geq \frac{C^{-1}}{(1+C)^{2}},
\]
and 
\[
\hat{d}_{\e,b}^{z}(\gamma(0),\gamma(-t)) \geq \frac{C^{-1}e^{\e t}}{(1+C)(1+Ce^{\e t})} \geq \frac{C^{-1}}{(1+C)^{2}},
\]
and finally 
\[
\hat{d}_{\e,b}^{z}(\gamma(t),\gamma(-t)) \geq \frac{C^{-1}e^{\e t}}{(1+Ce^{-\e t})(1+Ce^{\e t})} \geq \frac{C^{-1}}{(1+C)^{2}}.
\]
These three inequalities complete the proof of inequality \eqref{sphericalization estimates}. We conclude that the identity map $X_{\e,z} \rightarrow \hat{X}_{\e,b}^{z}$ is $\eta'$-quasisymmetric with $\eta'$ depending only on $\eta$ and $\kappa$, and therefore only on $\delta$, $K$, $\e$, and $M$. 
\end{proof}

\bibliographystyle{plain}
\bibliography{UniformInversion}

\end{document}